\documentclass[12pt]{amsart}
\usepackage{amsmath}
\usepackage{amssymb}
\usepackage[mathcal]{eucal}
\usepackage[all]{xy}
\usepackage{latexsym}
\usepackage{amstext}
\usepackage{amsfonts}
\usepackage{amsthm}
\usepackage{amsopn}
\usepackage{amsbsy}
\usepackage{layout}
\usepackage{color}
\usepackage{graphicx}
\usepackage{bm}
\usepackage{yfonts}
\usepackage{pifont}
\usepackage{tikz}

\theoremstyle{plain}
\newtheorem{theorem}{Theorem}[section]
\newtheorem{proposition}[theorem]{Proposition}
\newtheorem{lemma}[theorem]{Lemma}

\theoremstyle{definition}
\newtheorem{definition}[theorem]{Definition}
\newtheorem{problem}[theorem]{Problem}
\theoremstyle{remark}

\newtheorem{remark}[theorem]{Remark}

\def\nn{\|\cdot\|}

\def\ee{\varepsilon}

\def\xx{\overline x}

\def\N{{\mathbb N}}
\def\R{{\mathbb R}}
\def\B{{\mathbb B}}

\def\vv{\varphi}
\def\ee{\varepsilon}

\def\ee{\varepsilon}

\def\nn{\|\cdot\|}

\date{}

\begin{document}
\title[A survey on the Asplund property for spaces  $C(X)$]{A survey on the Asplund property for spaces  $C(X)$ of continuous functions}
\subjclass[2010]{Primary: 46E10, Secondary: 54C35, 54G12}
\keywords{Asplund and Weak Asplund space, Fr\'echet (G\^ ateaux) differentiable mapping, $C_k(X)$-space, scattered compact space, Banach space}
\author{Marian Fabian}
\address{Institute of Mathematics, Czech Academy of Sciences, Prague, Czech Republic}
\email{fabian@math.cas.cz}
\author{Jerzy K{\c{a}}kol}
\address{Faculty of Mathematics and Informatics, A. Mickiewicz University,
61-614 Pozna\'{n}, Poland}
\email{kakol@amu.edu.pl}
\author{Arkady Leiderman}
\address{Department of Mathematics, Ben-Gurion University of the Negev, Beer Sheva, P.O.B. 653, Israel}
\email{arkady@math.bgu.ac.il}

\begin{abstract}
The Asplund property plays an important role in Banach space theory due to its connections with differentiability properties of continuous convex functions, optimization problems, and the weak topology of Banach spaces.
Motivated by the variety of nonequivalent definitions proposed in the literature for locally convex spaces, in this survey we provide a unified framework for studying the Asplund property beyond the Banach space setting.

 The main object of our study is the locally convex space of continuous functions $C_k(X)$ endowed with the compact-open topology, where $X$ is an arbitrary Tychonoff space.
We completely characterize the Asplund property for $C_k(X)$ in terms of topological properties of the underlying space $X$.
 As an essential step, we revisit the proof of several classical results, including the Namioka--Phelps theorem. Our approach is independent of differentiability techniques and relies solely on topological methods.

The exposition is self-contained, and all major results are provided with complete proofs, making the paper accessible to both specialists and newcomers.

\end{abstract}

\bigskip

\maketitle
\section{Introduction}

The study of differentiability properties of convex functions in infinite-dimensional spaces has a long and distinguished history. As noted by R.R. Phelps in his seminal monograph \cite{p}, the first fundamental result in this direction was established by S. Mazur in 1933 (see \cite{Mazur}).

S. Mazur proved that every continuous convex function $f:D\to\mathbb{R}$ defined on an open convex subset $D$ of a separable Banach space $E$ is G\^ateaux differentiable at all points of a dense $G_\delta$ subset of $D$. This theorem marks the beginning of the modern theory of generic differentiability in infinite-dimensional analysis.

A major breakthrough came more than three decades later, in 1968, when E. Asplund \cite{Asplund} substantially strengthened Mazur's theorem.
He extended this result in two ways: first, by proving the same statement for a broader range of Banach spaces, and second, by identifying a more restricted class (now called Asplund spaces)
in which a stronger conclusion of Fr\'echet differentiability holds.

Asplund's work initiated an extensive line of research that has profoundly influenced the development of modern nonlinear analysis and Banach space theory.

Since then, differentiability theory has evolved in many directions and has been investigated in diverse areas of research far beyond Banach spaces.
Nevertheless, as we observed in our recent paper \cite{KL3}, a significant part of this development has not always been adequately reflected in the existing literature. In particular, several contributions extending classical differentiability theorems to a wider framework of topological vector spaces have received only limited attention.

Motivated by this observation, in this survey article, we continue our effort to provide a more coherent picture of the subject.
The present paper culminates in a comprehensive extension of the celebrated Namioka--Phelps theorem to spaces of continuous functions $C_k(X)$ over arbitrary Tychonoff spaces $X$, thereby considerably enlarging the scope of the classical theory.

In this paper all topological spaces are assumed to be Tychonoff.
They can be described as subspaces of the compact cubes $[0,1]^{T}$.

For a Tychonoff topological space $X$, we denote by
$C_k(X)$ the vector space of all continuous real-valued functions on $X$, equipped with the {\it compact-open topology},
 whose neighbourhood base at zero consists of the sets
$$
V^\ee_K:= \{f\in C_k(X):\ f[K]\subset (-\ee,\ee)\}
$$
where $K$'s run through all nonempty compact subsets of $X$ and $\ee$'s run through all positive numbers; thus $C_k(X)$ becomes a locally convex space.

We use the abbreviation lcs for locally convex spaces.
Let us begin by clarifying the notions of openness, continuity, and the Lipschitz property of mappings in the setting of lcs.

\begin{proposition}\label{ocl}
Let $X$ be an lcs. Assume that $D\subset X$,  $0\in V\subset X$ is a convex open set,  and $\vv: X\to \R$ is a convex function.
The following assertions are equivalent:

(1) $\vv$ is $V$-bounded above at some (every) $x\in D$, i.e., there is a real $t > 0$ such that
 $\sup\vv[(x+tV)\cap D]<\infty$.

(2) $\vv$ is $V$-continuous at some (every) $x\in D$, i.e., for every $\ee>0$ there is a $t>0$ such that $|\vv(x')-\vv(x)|<\ee$ whenever $x'\in (x+tV)\cap D$.
\end{proposition}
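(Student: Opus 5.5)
The plan has three parts. First I prove the pointwise equivalence (1)$\Leftrightarrow$(2) at a fixed $x\in D$, working only with the sets $(x+tV)\cap D$. Then I pass from ``some'' to ``every'' by a convexity transfer along segments. Throughout, $\vv$ is convex on all of $X$, so convexity inequalities may be evaluated anywhere. Only the \emph{bounds} are restricted to $D$, and that restriction governs each step.

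\emph{(2)$\Rightarrow$(1) at a fixed $x$.} This is immediate. Take $\ee=1$ in (2) to get $t>0$ with $\vv(x')<\vv(x)+1$ for all $x'\in(x+tV)\cap D$. Hence $\sup\vv[(x+tV)\cap D]\le \vv(x)+1<\infty$.

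\emph{(1)$\Rightarrow$(2) at a fixed $x$.} Let $M:=\sup\vv[(x+tV)\cap D]$ and fix $\ee>0$. Let $x'\in(x+stV)\cap D$ with $0<s<1$ small, and write $x'=x+sv$ with $v\in tV$.
\begin{itemize}
\item \emph{Upper estimate.} Since $x'=(1-s)x+s(x+v)$, convexity gives $\vv(x')-\vv(x)\le s\,(\vv(x+v)-\vv(x))\le s\,(M-\vv(x))$.
\item \emph{Lower estimate.} Since $x=\frac{1}{1+s}x'+\frac{s}{1+s}(x-v)$, convexity gives $\vv(x)-\vv(x')\le s\,(\vv(x-v)-\vv(x))$.
\end{itemize}
Both estimates use $M$, which bounds $\vv$ only on points of $D$. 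So I must check that $x+v$ and $x-v$ lie in $(x+tV)\cap D$, or at least in a set of the same kind where $\vv$ is bounded above.

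\emph{Main obstacle.} This membership is the main obstacle. For $x-v$ there are two issues. First, I replace $V$ by the convex symmetric neighbourhood $V\cap(-V)$. This is harmless for (1) and (2): boundedness on the larger set $x+tV$ implies boundedness on the smaller one, and continuity with respect to the smaller neighbourhood is obtained by shrinking $t$. Second, and more seriously, the points $x\pm v$ must belong to $D$. I would handle this by restricting the test points $x'$ to those for which the reflected and dilated points still lie in $D$. This holds automatically when $D$ is convex and contains $(x+t_0V)\cap D$ as a relatively absorbing piece around $x$.

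For the transfer from one point $x_0\in D$ to another $x\in D$, I use the same dilation trick. Pick $\lambda>1$ such that $y:=x_0+\lambda(x-x_0)\in D$. Then every point of $x+\frac{\lambda-1}{\lambda}tV$ can be written as $\frac1\lambda y+(1-\frac1\lambda)w$ with $w\in x_0+tV$. Convexity then gives
$$\vv\le \tfrac1\lambda\vv(y)+\big(1-\tfrac1\lambda\big)M$$
on those points, whenever $w\in D$. This yields (1) at $x$.

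Again the delicate point is that the auxiliary points $y$ and $w$ must lie in $D$. I would verify this first, since it is exactly what the ``(every)'' clause needs. If it fails for a general subset $D$, I would record precisely which points of $D$ admit such a $\lambda$, namely those along segments in $D$ that can be prolonged beyond $x$. The transfer is then valid along these segments.
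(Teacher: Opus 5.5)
Your (2)$\Rightarrow$(1) and your two convexity estimates are correct. But the proposal stops exactly where the work lies: none of the three membership problems you flag is resolved, and the patches you suggest do not work.

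\emph{The $V\cap(-V)$ replacement.} It is harmless for (1), but for (2) it runs the wrong way. An estimate on $x+t(V\cap(-V))$ says nothing about the larger set $x+tV$. Shrinking $t$ only helps if $V\subset c\,(V\cap(-V))$ for some $c>0$.

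\emph{Restricting the test points $x'$.} This proves an estimate on only part of $(x+tV)\cap D$, whereas (2) quantifies over all of it.

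\emph{The transfer.} Convexity controls $\vv(p)$ for $p\in(x+\frac{\lambda-1}{\lambda}tV)\cap D$ only when the corresponding point $w=(\lambda p-y)/(\lambda-1)\in x_0+tV$ also lies in $D$. Nothing guarantees this.

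These gaps cannot be closed, because the proposition is false as literally stated; the paper gives no proof and leaves it as an exercise. Take $X=D=\R$, $V=(-\infty,1)$ and $\vv(y)=y$. Then (1) holds at every $x$, since $\sup\vv[x+tV]=x+t$. But (2) fails everywhere, since $x-1\in x+tV$ for all $t>0$; this is precisely your first gap.

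Symmetry of $V$ does not fix the role of $D$. In $X=\R^2$ let $V=\{(a,b):|a|<1\}$, let $D$ be the open unit disc and $\vv(a,b)=b$. Then $\vv<1$ on $D$, so (1) holds everywhere. However, $(0,\frac12)\in tV\cap D$ for all $t>0$, so (2) fails at the origin, and similarly at every point of $D$.

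What is missing is therefore a hypothesis: $V=-V$, $D$ open and convex, and $x_0+t_0V\subset D$ at the base point. Under these assumptions your argument becomes a complete proof.
\begin{itemize}
\item Your dilation identity $x+\frac{\lambda-1}{\lambda}tV=\frac1\lambda y+\frac{\lambda-1}{\lambda}(x_0+tV)$ transports both the inclusion in $D$ and the bound $\frac1\lambda\vv(y)+\frac{\lambda-1}{\lambda}M$, where $M:=\sup\vv[x_0+tV]$, to every $x\in D$. A $\lambda>1$ with $y\in D$ exists because $D$ is open.
\item Then $x\pm v\in x+tV\subset D$ for $v\in tV$. Your two estimates give $|\vv(x+sv)-\vv(x)|\le s\,(M_x-\vv(x))$ for $0<s<1$, where $M_x:=\sup\vv[x+tV]$.
\end{itemize}
This is the version the paper actually uses, with $V=V^\ee_K$ and $u+V^\ee_L\subset D$, in Proposition~\ref{SY} and in the proof of Theorem~\ref{MAIN}.
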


\noindent The proof is left to the reader as an exercise.

We will use some basic facts about scattered topological spaces, see for example \cite[Section 8.5]{Semadeni}.
A topological space $X$ is said to be {\it scattered} if every (closed) nonempty
subset $S$ of $X$ has an isolated point in $S$.

Denote by $A^{(1)}$  the set of all non-isolated (in $A$) points of $A \subset X$.
For ordinal numbers $\alpha$, the $\alpha$-th {\it Cantor--Bendixson derivative} of a topological space $X$
is defined by transfinite induction as follows.
\begin{itemize}
\item  $X^{(0)} = X$;
\item  $X^{(\gamma)} = (X^{(\gamma-1)})^{(1)}$ \text{if $\gamma>0$ is a non-limit ordinal};
\item  $X^{(\gamma)} = \bigcap_{\alpha<\gamma} X^{(\alpha)}$ \text{if $\gamma$ is a limit ordinal}.
\end{itemize}
Clearly,  $X$ is scattered if and only if  $X^{(\alpha)}$ is empty for some ordinal $\alpha$.
For a compact space $X$ this implies the following easy assertion.

\begin{lemma} \label{scattered}
A compact space $X$ is scattered if and only if there is no nonempty perfect subset of $X$.
\end{lemma}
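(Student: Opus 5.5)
We prove the two implications separately. Throughout, a \emph{perfect} subset means a nonempty closed set with no isolated points (in itself). The direction from scatteredness is immediate from the definition. Suppose $X$ is scattered and $P\subset X$ is a nonempty perfect set. Since $P$ is nonempty, scatteredness gives a point of $P$ that is isolated in $P$. This contradicts $P$ having no isolated points, so no nonempty perfect subset exists; compactness is not needed here.

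For the converse, assume $X$ is compact and not scattered; we produce a nonempty perfect subset, using the Cantor--Bendixson derivatives introduced above. First we check by transfinite induction that each $X^{(\alpha)}$ is closed. The set $A^{(1)}$ of non-isolated points of a closed set $A$ is closed in $A$, since the isolated points of $A$ form an open subset of $A$, and hence $A^{(1)}$ is closed in $X$. Intersections of closed sets are closed, which handles limit ordinals. The sequence $(X^{(\alpha)})$ is also decreasing. A decreasing transfinite family of subsets of $X$ must stabilize: otherwise we would obtain an injection of all ordinals into the power set $2^X$, which is impossible. So there is an ordinal $\alpha_0$ with $X^{(\alpha_0+1)}=X^{(\alpha_0)}=:P$.

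By the definition of the derivative, the equality $P^{(1)}=P$ says exactly that $P$ has no isolated points, and $P$ is closed. It remains to see that $P\neq\emptyset$. This is where we use the stated characterization that $X$ is scattered iff some $X^{(\alpha)}$ is empty. Since $X$ is not scattered, every $X^{(\alpha)}$ is nonempty, in particular $X^{(\alpha_0)}=P\neq\emptyset$. Thus $P$ is a nonempty perfect subset.

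The step deserving care is the nonemptiness of $P$, which requires knowing that the derivatives never become empty at a limit stage. If one wants to avoid relying on that characterization, we use compactness directly. Suppose for contradiction that $X^{(\gamma)}=\emptyset$ for some limit ordinal $\gamma$, even though every $X^{(\alpha)}$ with $\alpha<\gamma$ is nonempty. The sets $X^{(\alpha)}$, $\alpha<\gamma$, form a decreasing family of nonempty closed sets, so they have the finite intersection property. By compactness their intersection is nonempty, a contradiction.

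At successor stages nonemptiness is propagated by the trivial fact that $A^{(1)}=\emptyset$ for a nonempty $A$ with $A^{(1)}=A$ is impossible. Equivalently, one can argue directly: if $X$ is not scattered, some nonempty $S$ has no isolated points. A short induction then shows $S\subset X^{(\alpha)}$ for all $\alpha$. The successor step holds because points of $S$ are non-isolated in $S$, hence in any superset. The limit step holds trivially. This gives nonemptiness at once, and in fact shows the closure $\overline{S}$, which has no isolated points either, is already a nonempty perfect set. I will use this last observation as the cleanest route.
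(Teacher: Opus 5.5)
Your proof is correct, and the route you finally commit to differs from the one the paper indicates. The paper gives no written proof. It derives the lemma from the remark that $X$ is scattered iff some Cantor--Bendixson derivative $X^{(\alpha)}$ is empty: for non-scattered $X$ the derivatives stabilize at a nonempty closed set without isolated points, the perfect kernel. That is your first argument, and it is fine as it stands once that characterization is granted.

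Your last argument bypasses the derivatives entirely. A nonempty $S$ without isolated points has closure $\overline S$ without isolated points, and $\overline S$ is the required perfect set. You should include the one line that justifies this: if $U$ is open with $U\cap\overline S=\{x\}$, then $\emptyset\neq U\cap S\subset\{x\}$, so $x$ would be isolated in $S$. This route is shorter, needs no transfinite induction, and shows that compactness plays no role in the lemma. Your observation is also exactly what justifies the parenthetical ``(closed)'' in the paper's definition of scattered; with that version of the definition, the lemma is a restatement of the definition. What the derivative approach gives in return is an explicit description of the largest perfect subset, namely $X^{(\alpha_0)}$.

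The middle paragraph invoking compactness should be dropped. It never uses that $X$ is non-scattered, so it cannot show $P\neq\emptyset$: a finite discrete space is compact and has $X^{(1)}=\emptyset$. The sentence about successor stages is vacuous, since before stabilization one has $A^{(1)}\neq A$. Derivatives of compact spaces can vanish at successor stages. What prevents vanishing at every stage is non-scatteredness, through the inclusion $S\subset X^{(\alpha)}$, not compactness.
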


The next important results characterize scattered compact spaces in terms of continuous images.

\begin{theorem}\label{sca}\hfill
 \begin{enumerate}
\item {\rm \cite[Proposition 8.5.3]{Semadeni}}
Let $\pi: X \to Y$ be a continuous surjection, where $Y$ is a Hausdorff space.
If $X$ is a scattered compact space then so is $Y$.
\item {\rm \cite[Theorem 8.5.4]{Semadeni}}
A compact space $X$ is scattered if and only if there is no  continuous surjection from $X$ onto the closed interval $[0,1]$.
\end{enumerate}
\end{theorem}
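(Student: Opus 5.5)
Both parts rest on Lemma \ref{scattered}: a compact space is scattered exactly when it contains no nonempty perfect subset. The plan for each implication is to produce, or rule out, such a perfect set.

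\textbf{Part (1).} Since $X$ is compact and $\pi$ is a continuous surjection onto the Hausdorff space $Y$, the space $Y$ is compact. Suppose $Y$ is not scattered, so by Lemma \ref{scattered} it contains a nonempty perfect set $P$, which we may take closed. Consider the family of closed subsets $F\subset X$ with $\pi[F]=P$; it is nonempty because $\pi^{-1}[P]$ belongs to it. Order it by reverse inclusion. For a chain $\{F_i\}$, the intersection $F=\bigcap F_i$ still maps onto $P$: for $y\in P$, the sets $F_i\cap\pi^{-1}(y)$ are nonempty, closed and nested, so compactness gives a point in their intersection. By Zorn's lemma there is a minimal closed $F$ with $\pi[F]=P$.

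I claim $F$ is perfect. Suppose instead that $x\in F$ is isolated in $F$. Then $F\setminus\{x\}$ is closed, and minimality forces $\pi[F\setminus\{x\}]\neq P$. Hence $y=\pi(x)$ has $x$ as its only preimage in $F$. Now $\pi[F\setminus\{x\}]$ is compact, hence closed in $Y$, and it contains $P\setminus\{y\}$. Because $y$ is not isolated in $P$, the point $y$ lies in the closure of $P\setminus\{y\}$, so $y\in\pi[F\setminus\{x\}]$, a contradiction. Thus $F$ is a nonempty perfect subset of $X$, contradicting Lemma \ref{scattered} for $X$.

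\textbf{Part (2).} The direction ``scattered $\Rightarrow$ no surjection onto $[0,1]$'' follows at once from (1), since $[0,1]$ is not scattered. For the converse, assume $X$ is compact but not scattered, and let $P\subset X$ be a nonempty closed perfect subset. It suffices to find a continuous surjection $P\to[0,1]$: by Tietze's theorem ($X$ compact Hausdorff is normal) it extends to a continuous $X\to[0,1]$, and the extension is still onto.

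To get the map on $P$, build a Cantor scheme of nonempty closed sets $F_s\subset P$ for finite $0$--$1$ sequences $s$. Set $F_\emptyset=P$. Given $F_s$ with nonempty interior in $P$, use that $P$ is perfect and Hausdorff to pick two distinct points of that interior. By regularity choose disjoint closed neighbourhoods $F_{s0},F_{s1}\subset F_s$ of these points, each with nonempty interior in $P$.

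Then choose continuous $f_n:P\to[0,1]$ that separate the level-$n$ pieces. By Urysohn's lemma take $g_s:P\to[0,1]$ equal to $0$ on $F_{s0}$ and $1$ on $F_{s1}$, and put
$$f=\sum_{n}\sum_{|s|=n} 2^{-n-1}\,g_s\cdot h_s,$$
where $h_s$ is a cut-off function equal to $1$ on $F_s$ and $0$ on the other level-$n$ pieces.

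\textbf{Main obstacle.} Making this explicit formula genuinely continuous and onto is the delicate step; in particular it must handle the sum over $2^n$ pieces at each level. The cleaner route I would use instead goes through the Cantor set $2^\omega$. Let $K=\bigcap_n\bigcup_{|s|=n}F_s$, which is closed in $P$, and define $\sigma:K\to2^\omega$ by sending each point to its branch. This map is continuous and onto, because every branch has a nonempty intersection by compactness. Composing with the standard surjection $2^\omega\to[0,1]$, $(\epsilon_n)\mapsto\sum\epsilon_n2^{-n-1}$, gives a continuous surjection $K\to[0,1]$. Extending it to $X$ by Tietze's theorem, as $K$ is closed in $X$, finishes the proof.
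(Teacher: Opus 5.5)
The paper gives no proof of this statement; it only cites Semadeni [Proposition 8.5.3, Theorem 8.5.4]. Your proof is correct and self-contained, and it follows the standard textbook route. For (1), a Zorn-minimal closed $F\subset X$ with $\pi[F]=P$ is automatically perfect. For the nontrivial half of (2), you build a dyadic Cantor scheme inside a perfect set, map its kernel $K$ onto $2^\omega$ and then onto $[0,1]$, and extend by Tietze.

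For a final write-up I would make four small changes:
\begin{enumerate}
\item Delete the abandoned formula with $g_s h_s$. It can be rescued by giving the cut-offs $h_s$ of each level pairwise disjoint supports, but the Cantor-set route makes it unnecessary.
\item In the chain argument, say that $F_i\cap\pi^{-1}(y)$ is closed because $Y$ is Hausdorff, so $\{y\}$ is closed.
\item Note that distinct pieces $F_s,F_t$ of the same level are disjoint: look at the first coordinate where $s$ and $t$ differ. Together with $F_{si}\subset F_s$, this is what makes the branch map $\sigma$ well defined.
\item Justify the continuity of $\sigma$. The preimage of the cylinder of sequences extending $s$ is $K\cap F_s=K\setminus\bigcup_{|t|=|s|,\,t\neq s}F_t$, which is open in $K$.
\end{enumerate}
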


We will also use the following classical theorem of S. Mazurkiewicz and W. Sierpi\'nski.

\begin{theorem}\cite[Theorem 8.6.10]{Semadeni}\label{ordinal}
Every compact scattered first-countable space (in particular, every metrizable compact space) is homeomorphic to a countable compact ordinal.
\end{theorem}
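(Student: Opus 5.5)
The statement is the Mazurkiewicz--Sierpi\'nski theorem: a compact scattered first-countable space $X$ is homeomorphic to a countable compact ordinal. I would reduce it to the countable case and then induct on Cantor--Bendixson rank.

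\textbf{Step 1: $X$ is countable.} I would show by transfinite induction that every derivative $X^{(\alpha)}$ is closed; this is routine. In a compact scattered space some $X^{(\alpha)}$ is empty, and by compactness the first such $\alpha$ is a successor, say $\alpha=\beta+1$, with $X^{(\beta)}$ finite and nonempty.

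Next I would prove by induction on the rank that $X$ is countable. Take a point $x\in X$ of rank $\gamma$, meaning $x\in X^{(\gamma)}\setminus X^{(\gamma+1)}$. Since $x$ is isolated in $X^{(\gamma)}$, there is a clopen compact neighbourhood $U$ of $x$ with $U\cap X^{(\gamma)}=\{x\}$; clopen sets form a base because compact scattered spaces are zero-dimensional. Fix a countable decreasing clopen base $U_n$ at $x$, which exists by first-countability. Then
$$U\setminus\{x\}=\bigcup_n (U_n\setminus U_{n+1}),$$
and each piece is compact with all of its points of rank $<\gamma$. By induction each piece is countable, so $U$ is countable. Finitely many such neighbourhoods cover $X$, hence $X$ is countable.

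In the metrizable case, countability is immediate from a different route. A compact metrizable space is either countable or contains a Cantor set (Cantor--Bendixson), and a Cantor set is a perfect set, which Lemma~\ref{scattered} excludes.

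\textbf{Step 2: countable compact Hausdorff implies metrizable.} Points are $G_\delta$ by first-countability, and $X$ is countable, so $X$ has a countable network. A compact space with a countable network is metrizable, so $X$ is a countable compact metric space.

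\textbf{Step 3: build the homeomorphism onto $\omega^\beta\cdot n+1$.} I would proceed by induction on the pair $(\beta,n)$, where $X^{(\beta)}$ has exactly $n$ points.
\begin{itemize}
\item Splitting $X$ into $n$ clopen pieces, each containing exactly one top-rank point, reduces the problem to $n=1$, since $\omega^\beta\cdot n+1$ splits the same way.
\item For $n=1$ with top point $x$, write $X\setminus\{x\}=\bigcup_k V_k$, where the $V_k$ are pairwise disjoint clopen sets of rank $\le\beta$ whose top points have rank $<\beta$. These come from the decreasing clopen base at $x$.
\item By induction each $V_k$ is homeomorphic to $\omega^{\beta_k}\cdot m_k+1$ with $\beta_k<\beta$.
\item Concatenate these ordinal blocks in order and append $x$ as the supremum. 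The result is the one-point compactification of $\bigsqcup_k V_k$, which is homeomorphic to the ordinal
$$\sum_k(\omega^{\beta_k}\cdot m_k+1)+1.$$
\end{itemize}

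\textbf{Main obstacle.} The difficulty is ensuring that this ordinal sum equals $\omega^\beta+1$, that is, that the $\beta_k$ are cofinal in $\beta$. This holds because $x$ has rank exactly $\beta$: $x$ must be a limit of points of every rank below $\beta$. When $\beta$ is a successor $\delta+1$, the blocks with $\beta_k=\delta$ must occur infinitely often, and I would regroup blocks so that each has the form $\omega^\delta+1$. Getting this ordinal-arithmetic bookkeeping right is the step I expect to need the most care.
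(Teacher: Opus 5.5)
The paper gives no proof of this theorem; it only cites Semadeni. The printed parenthetical should read ``every metrizable compact \emph{scattered} space'', as you assumed, since otherwise $[0,1]$ is a counterexample. Your argument is the classical Mazurkiewicz--Sierpi\'nski induction on Cantor--Bendixson rank. It is correct up to a harmless off-by-one, described at the end.

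Some remarks that simplify it:
\begin{enumerate}
\item The step you flag as the main obstacle is not needed for the statement as posed. Take as inductive hypothesis only that every compact scattered first-countable space of smaller rank is homeomorphic to \emph{some} countable successor ordinal. Then your one-point-compactification concatenation already gives $X\cong\sigma+1$ with $\sigma=\sum_k\alpha_k$ countable. The finite splitting handles $n>1$ via $\alpha_1+\dots+\alpha_n$.
\item For the same reason, Steps 1 and 2 are redundant. Step 3 uses only compactness, zero-dimensionality and a countable clopen base at each point, and countability of the resulting ordinal comes for free. Also, in Step 2 the countable network follows from countability alone; the $G_\delta$ remark plays no role.
\item If you do want the sharp form $\omega^\beta\cdot n+1$, no regrouping is needed. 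Each block $\omega^{\beta_k}\cdot m_k+1$ is $<\omega^\beta$, so by additive indecomposability every partial sum is $<\omega^\beta$, hence $\sigma\le\omega^\beta$. Conversely, in the limit case $\sigma\ge\sup_k\omega^{\beta_k}=\omega^\beta$. In the case $\beta=\delta+1$, the infinitely many blocks with $\beta_k=\delta$ give $\sigma\ge\omega^\delta\cdot j$ for every $j$, hence $\sigma\ge\omega^{\delta+1}$.
\item The off-by-one is in the base case of the sharp induction. A discrete space with $n$ points is the ordinal $n$, not $\omega^0\cdot n+1=n+1$. So the formula $\omega^\beta\cdot n+1$ holds only for $\beta\ge1$, unless you use the intervals $[1,\omega^\beta\cdot n]$. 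The same slip affects the splitting remark for $\beta=0$. Normalize this before starting the induction.
\end{enumerate}
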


In several places, we  use the following applicable result,  whose proof follows from the classical Tietze--Urysohn extension theorem;
see for example \cite[Extension Lemma, p. 29]{Jarchow} or \cite[Section 3.6]{e}.
\begin{proposition}\label{T}
Let $X$ be a  Tychonoff space and let $L\subset X$ be a compact subset of $X$. Then every continuous function $f: L\rightarrow [0,1]$ can  be extended to a continuous function $F:X\rightarrow [0,1]$  defined on the whole space $X$.
\end{proposition}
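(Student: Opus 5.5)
The plan is to embed $X$ into a compact Hausdorff space, extend there with the classical Tietze theorem, and restrict back to $X$. Since $X$ is Tychonoff, there is a homeomorphic embedding $e: X\to [0,1]^{T}$ for some index set $T$; identify $X$ with its image $e[X]$, and let $\beta$ denote the closure of $X$ in $[0,1]^{T}$. Then $\beta$ is a compact Hausdorff space, hence normal.

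Next, since $L$ is compact in $X$ and the embedding is continuous, $L$ is a compact subset of $\beta$. Because $\beta$ is Hausdorff, $L$ is therefore closed in $\beta$. The topology that $L$ inherits from $\beta$ coincides with its topology as a subspace of $X$, because $X$ carries the subspace topology from $[0,1]^T$. Consequently, the given $f$ is a continuous map from the closed subset $L$ of the normal space $\beta$ into $[0,1]$.

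By the Tietze--Urysohn extension theorem, $f$ extends to a continuous $G:\beta\to[0,1]$. Finally, put $F:=G|_X$. This map is continuous, takes values in $[0,1]$, and agrees with $f$ on $L$ because $L\subset X$.

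No step here is a genuine obstacle. The only point requiring care is that one cannot apply Tietze directly in $X$: $X$ need not be normal, and although $L$ is closed in $X$, that alone would not suffice. The passage to the compactification $\beta$, where $L$ is closed and the ambient space is normal, is exactly what resolves this. An alternative that avoids compactifications is to cover $L$ by finitely many functional neighbourhoods using complete regularity and glue extensions, but the compactification route is cleaner and is the one I would write out.
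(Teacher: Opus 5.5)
Your proof is correct and follows the approach the paper has in mind: the paper gives no proof of its own, only a pointer to the Tietze--Urysohn theorem and to the \v{C}ech--Stone compactification material (Engelking, Section 3.6). That is exactly your argument: pass to a compactification, where $L$ is a closed subset of a normal space, extend by Tietze, and restrict back to $X$.
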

Differentiability theory in general topological vector spaces is substantially richer than in Banach spaces. While the notion of G\^ateaux differentiability admits a canonical formulation, several nonequivalent concepts of Fr\'echet differentiability have been developed in the literature. Since both versions will appear naturally in our discussion, we briefly recall the relevant definitions.

Let $Z$ be a topological vector space and let $\vv:Z\to\mathbb{R}$ be a real-valued function defined on a neighbourhood of a point $z\in Z$.

We say that $\vv$ is \emph{G\^ateaux differentiable} at $z$ if there exists a continuous linear functional $\xi:Z\to\mathbb R$ such that
\[
\vv(z+th)-\vv(z)-\xi(th)=o(t)
\]
as $t\to0$ for every $h\in Z$.

The situation is more delicate for Fr\'echet differentiability.

\medskip

\noindent
{\bf (1) Yamamuro Fr\'echet differentiability.}
We say that $\vv$ is \emph{Fr\'echet differentiable at $z$ in the sense of Yamamuro} if $\vv$ is continuous at $z$ and there exists a continuous linear functional $\xi:Z\to\mathbb R$ such that
\[
\sup_{h\in M}
\bigl(\vv(z+th)-\vv(z)-\xi(th)\bigr)=o(t)
\]
as $t\to0$ for every bounded set $M\subset Z$.
This notion was extensively used, among others, in \cite{s,y}.

\medskip

\noindent
{\bf (2) Schwartz Fr\'echet differentiability.}
We say that $\vv$ is \emph{Fr\'echet differentiable at $z$ in the sense of Schwartz} if $\vv$ is continuous at $z$ and there exist a continuous linear functional $\xi:Z\to\mathbb R$ and a neighbourhood $V$ of $0$ such that
\[
\sup_{h\in V}
\bigl(\vv(z+th)-\vv(z)-\xi(th)\bigr)=o(t)
\]
as $t\to0$.
This definition goes back to Schwartz; see, for example, \cite[p.~11]{sc}.

In either case, the functional $\xi$ is uniquely determined, denoted by $\vv'(z)$, and called the \emph{Fr\'echet derivative} of $\vv$ at $z$.

For a detailed overview  of differentiability in topological vector spaces, we refer the reader to the survey of V.I. Averbukh and O.G. Smolyanov \cite{as}.

At first sight, the Schwartz notion appears substantially stronger than the Yamamuro one, since uniformity is required on a neighbourhood rather than merely on bounded sets. In general, the two concepts need not coincide. Remarkably, however, they are equivalent in the principal situations considered in this paper. This happens, first, for the spaces $C_k(X)$, where $X$ is an arbitrary Tychonoff space (see Proposition~\ref{SY}), and, second, for the weak and weak$^{*}$ topologies on Banach spaces, namely when $Z=(E,w)$ or $Z=(E^{*},w^{*})$ for a Banach space $E$; see \cite{s}. As will become apparent throughout the paper, the convexity of the function $\vv$ is the crucial ingredient behind these equivalences.

\begin{proposition}\label{SY} Let $X$ be a Tychonoff topological space, let $D\subset C_k(X)$ be a convex open set, let $\vv: D\to\R$ be a convex real-valued continuous function, and let $f\in D$ be given.

\noindent Then $\vv$ is Schwartz--Fr\'echet differentiable at $f$ if and only if $\vv$ is Yamamuro--Fr\'echet differentiable at $f$.
\end{proposition}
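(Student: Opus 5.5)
The direction ``Schwartz $\Rightarrow$ Yamamuro'' is immediate from the definitions: every bounded set $M\subset C_k(X)$ is absorbed by the neighbourhood $V$ in the Schwartz definition, i.e.\ $M\subset sV$ for some $s>0$. Hence
\[
\sup_{h\in M}\bigl(\vv(f+th)-\vv(f)-\xi(th)\bigr)\le \sup_{h\in V}\bigl(\vv(f+tsh)-\vv(f)-\xi(tsh)\bigr)=o(t).
\]
All the work is in the converse direction, and there I plan to exploit convexity together with the special structure of the compact-open topology.

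Assume $\vv$ is Yamamuro--Fr\'echet differentiable at $f$ with derivative $\xi$. Put $r(h):=\vv(f+h)-\vv(f)-\xi(h)$. This is a convex function defined near $0$, with $r(0)=0$. Since $\vv$ is continuous at $f$, Proposition~\ref{ocl} gives a compact $K$ and $\ee>0$ such that $f+V^\ee_K\subset D$ and $\vv$ is bounded above on $f+V^\ee_K$. Since $\xi$ is continuous, there are a compact $L$ and $c>0$ with $|\xi(h)|\le c\sup_L|h|$. In particular $\xi$ vanishes on every function that vanishes on $L$. Set $K'=K\cup L$. I will show that the Schwartz condition holds with the neighbourhood $V:=V^1_{K'}$.

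The key step, which I expect to be the main obstacle, is to understand functions vanishing on $K'$. Let $W=\{g\in C_k(X): g|_{K'}=0\}$. This is a linear subspace with $W\subset V^\ee_K$. Hence $g\mapsto\vv(f+g)$ is a convex function on the whole linear space $W$ that is bounded above. A convex function on a linear space that is bounded above is constant: for each $g\in W$, the function $s\mapsto \vv(f+sg)$ is convex and bounded above on $\R$. Therefore $\vv(f+g)=\vv(f)$ for all $g\in W$. Since also $\xi(g)=0$ on $W$, we get $r\equiv 0$ on $W$.

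Now decompose an arbitrary $h\in V$ as follows. Let $h_1:=\max(\min(h,1),-1)$ be the truncation of $h$, and let $h_2:=h-h_1$. Then $h_1$ lies in the set $M=\{g:\sup_X|g|\le1\}$, which is bounded in $C_k(X)$. Moreover $h_2\in W$, because $|h|<1$ on $K'$ forces $h_1=h$ there. By convexity,
\[
r(th)\le \tfrac12 r(2th_1)+\tfrac12 r(2th_2)=\tfrac12 r(2th_1).
\]
Here the points $f+2th_1$ lie in $D$ for small $t$, since $D$ is open and $M$ is bounded. Consequently
\[
\sup_{h\in V} r(th)\le \tfrac12\sup_{g\in M} r(2tg),
\]
and the right-hand side is $o(t)$ by the Yamamuro condition applied to the bounded set $M$. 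This is exactly the Schwartz condition at $f$ with neighbourhood $V$ and the same functional $\xi$.
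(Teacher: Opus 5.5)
Your proof is correct, and it also covers the easy direction (Schwartz $\Rightarrow$ Yamamuro, by absorbing the bounded set into $V$), which the paper leaves implicit. For the main direction you use the same key fact as the paper. This is Sharp's observation that a convex function bounded above on $f+V^\ee_K$ cannot vary along functions vanishing on $K$. Where you differ is in how you reduce to the bounded set $M$.

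The paper replaces each $h\in V^\gamma_K$ by a Tietze extension $\check h$ of $h|_K$ with values in $(-\gamma,\gamma)$ (Proposition~\ref{T}). It then invokes its Claim that $\vv$ takes equal values at \emph{any} two points of $D$ agreeing on $K$, which gives the exact identity $\vv(f\pm th)=\vv(f\pm t\check h)$. After that it works with symmetric second differences and uses the subgradient inequality to return to the one-sided remainder.

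You instead truncate $h$ pointwise, so no extension theorem is needed. You also need constancy only on the single affine subspace $f+W$. The convexity split $r(th)\le\frac12 r(2th_1)+\frac12 r(2th_2)$ disposes of the $W$-component at the harmless cost of replacing $t$ by $2t$, and you never leave the remainder $r$.

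Your argument is therefore shorter and more elementary for this proposition. The paper's fibrewise Claim is the stronger statement, and the paper reuses it in the proof of Theorem~\ref{MAIN} to define $\psi$ on $T[D]$.

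Incidentally, the auxiliary compact $L$ for $\xi$ is not needed. Once $\vv$ is constant on $f+\{g:\ g|_K=0\}$, the Gâteaux derivative $\xi$ automatically vanishes on that subspace.
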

\begin{proof}
 Put
\begin{equation}\label{M}
M:=  \{h\in C_k(X):\ h[X]\subset (-1,1)\}.
\end{equation}
Clearly, $M$ is a bounded set. Using Proposition~\ref{ocl}, we find a compact set $K\subset X$ and a $\gamma>0$
such that
$$
f+V^\gamma_K\subset D\quad {\rm and}\quad \sup\vv [f+V^\gamma_K]<\infty,
$$
where
$$
V^\gamma_K:= \{h\in C_k(X):\ h[K] \subset (-\gamma,\gamma)\};
$$
recall that this $V^\gamma_K$ is a convex open neighbourhood of $0\in C_k(X)$. (This $V^\gamma_K$ will serve as a witness to the
Schwartz--Fr\'echet differentiability of $\vv$ at $f$).

As $\vv$ is Yamamuro--Fr\'echet differentiable at $f$, we have that
$$
\sup_{h\in M}\big(\vv(f+th)-\vv(f)-\vv'(f)(th)\big) = o(t)\quad{\rm as}\quad t\to0.
$$
Hence
$$
\sup_{h\in M}\sum\vv(f\pm th)-2\vv(f) = o(t)\quad{\rm as}\quad t\downarrow0.
$$
Now, consider any $h\in V^\gamma_K$. Then $h[K]\subset (-\gamma,\gamma)$.
Since $X$ is a Tychonoff space and $K\subset X$ is compact, Proposition~\ref{T} yields a continuous
extension $ \check h: X\to (-\gamma,\gamma)$ such that $\check h|_K=h|_K$.
By the Claim below, we get that $\vv(f+th)=\vv(f+t\check h)$, and therefore,
$$
\sup_{h\in V^\gamma_K}\sum\vv(f\pm th)-2\vv(f) = o(t)\quad{\rm as}\quad t\downarrow0.
$$
We also know that $\lim_{t\to0}\frac1t(\vv(f+th)-\vv(f))=\vv'(f)h$ for every $h\in C_k(X)$.
Thus, using the convexity of $\vv$, we can conclude that

$$\sup_{h\in V^\gamma_K}\big(\vv(f+th)-\vv(f)-\vv'(f)(th)\big) = o(t)\quad{\rm as}\quad t\to0.
$$
We proved that $\vv$ is Schwartz--Fr\'echet differentiable at the point $f$, where the neighbourhood $V^\gamma_K$ is a witness for that.

It remains to formulate and prove the announced claim/observation, going back to B. Sharp \cite[p. 208, 209]{s}:
\medskip

\noindent {\tt Claim.}  {\it If $f,f'\in D$ and $f|_K=f'|_K$, then $\vv(f)=\vv(f')$.}
\smallskip

\noindent Proof of Claim.
For all $\lambda>1$ we have $f+\lambda(f'-f)\in D  $ and
$$
f'=\frac{\lambda-1}\lambda f +\frac1\lambda(f+\lambda(f'-f));
$$
hence, by the convexity of $\vv$, we have
\begin{eqnarray*}
\vv(f') &\le& \frac{\lambda-1}\lambda \vv(f) +\frac1\lambda\vv(f+\lambda(f'-f)) \\
        &\le& \frac{\lambda-1}\lambda \vv(f) +\frac1\lambda\sup\vv[f+V_K^\gamma] \longrightarrow \vv(f)\ \ {\rm as}\ \ \lambda\uparrow\infty;
\end{eqnarray*}
Hence $\vv(f') \le \vv(f)$ and, by symmetry, $\vv(f) \le \vv(f')$.
\end{proof}

\noindent We remark that the Yamamuro--Fr\'echet differentiability in the argument above could be replaced by the (yet weaker) `$M$--Fr\'echet differentiability'
where $M$ is defined in (\ref{M}) above.
\smallskip

\begin{remark}
In view of Proposition \ref{SY}, since the paper deals with the $C_k(X)$-spaces, we can and will use only the term
{\it Fr\'echet differentiability} without any extra adjectives.
\end{remark}

Throughout the paper we follow the definitions proposed in \cite{es} and \cite{s}.

\begin{definition}\label{def3}
An lcs $E$ is called an {\it Asplund (weak Asplund) space} if every continuous convex function $f: D \to \R$, where $D\subset E$ is a nonempty open and convex set,
is Fr\'echet (G\^ ateaux, respectively) differentiable at the points of a dense $G_{\delta}$ subset of $D$.
Asplund (weak Asplund) spaces are abbreviated by {\it ASP (WASP}, respectively).
\end{definition}

\begin{definition}\label{def4}
An lcs $E$ is called a {\it Fr\'echet (G\^ ateaux) Differentiability Space} if every continuous convex function $f: D \to \R$, where  $D\subset E$ is a nonempty open and convex set,
is Fr\'echet (G\^ ateaux, respectively) differentiable at the points of a dense subset of $D$.
Fr\'echet (G\^ ateaux) differentiable spaces are abbreviated by {\it FDS {\rm (}GDS}, respectively).
\end{definition}

\begin{remark}\label{notD}
A natural question arises whether in Definitions \ref{def3} and \ref{def4} one can take a subcollection of more specific functions $f$ and obtain the same classes of lcs $E$.
In particular, one can ask: Is it sufficient to consider only a subcollection of those continuous convex functions $f$ whose domains coincide with the whole $E$?

It has been observed in our paper \cite{KL3} that indeed, an lcs $E$ is FDS (GDS) if every continuous convex function $f: E \to \R$
is Fr\'echet (G\^ ateaux, respectively) differentiable at the points of a dense subset of $E$.

Similarly, assuming that $E$ is a Baire lcs, the following holds:
\begin{enumerate}
\item[{\rm (1)}] $E$ is ASP if and only if every continuous convex function $f: E \to \R$ is Fr\'echet differentiable on a dense $G_{\delta}$ subset of $E$.
\item[{\rm (2)}] $E$ is WASP if and only if every continuous convex function $f: E \to \R$ is G\^ ateaux differentiable on a dense $G_{\delta}$ subset of $E$.
\end{enumerate}
For the details we refer the reader to \cite[Propositions 1.7 and 1.9 ]{KL3}.
\end{remark}

The next important statement, due to J. Rainwater, can be found in \cite[Proposition 1.25]{p} and \cite[Lemma 8.23]{y1}.

\begin{proposition}\label{Phelps}
For any Banach space $X$ and every open set $D\subset X$, the (possibly empty) set $G$ of points of  Fr\'echet differentiability of any continuous convex function from $D$ into $\R$ is a $G_{\delta}$.
\end{proposition}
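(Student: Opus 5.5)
The approach is to write the set of Fréchet differentiability points as a countable intersection of open sets defined by a uniform second-difference condition. For $x\in D$ and $n\in\N$ put
$$
G_n:=\Bigl\{x\in D:\ \exists\,\delta>0 \text{ with } \sup_{\|h\|\le 1}\bigl(\vv(x+\delta h)+\vv(x-\delta h)-2\vv(x)\bigr)<\tfrac{\delta}{n}\Bigr\},
$$
where $\vv:D\to\R$ is the given continuous convex function and, in each $G_n$, $\delta$ is also required to satisfy $x+\delta B_X\subset D$. The claim is that $G=\bigcap_n G_n$ and that each $G_n$ is open.

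The first step is the characterization: a continuous convex $\vv$ is Fréchet differentiable at $x$ if and only if
$$
\lim_{t\downarrow0}\sup_{\|h\|\le1}\frac{\vv(x+th)+\vv(x-th)-2\vv(x)}{t}=0.
$$
Necessity is immediate: add the expansions at $h$ and at $-h$, and the linear terms cancel. For sufficiency, take any subgradient $\xi\in\partial\vv(x)$, which exists by continuity and Hahn--Banach. Then $\vv(x+th)-\vv(x)-\xi(th)\ge0$ and likewise at $-th$. Their sum is exactly the symmetric second difference, so each term is bounded by it, uniformly in $\|h\|\le1$. This gives Fréchet differentiability with derivative $\xi$.

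The second step uses monotonicity. By convexity, the function $t\mapsto(\vv(x+th)+\vv(x-th)-2\vv(x))/t$ is nondecreasing in $t>0$, since each one-sided difference quotient is nondecreasing. Hence the limit condition is equivalent to requiring, for each $n$, some $\delta>0$ with the supremum $<\delta/n$. This gives $G=\bigcap_n G_n$.

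The third step, openness of $G_n$, is the main point. Since $\vv$ is continuous, it is locally bounded above, and by Proposition~\ref{ocl} it is therefore locally Lipschitz. Fix $x\in G_n$ with witness $\delta$ and slack $\eta:=\delta/n-\sup(\cdots)>0$. Shrink to a ball around $x$ on which $\vv$ is $L$-Lipschitz and which contains $y\pm\delta h$ for all $y$ near $x$ and $\|h\|\le1$; this may require first replacing $\delta$ by a smaller value, which monotonicity allows while keeping the strict inequality. For $\|y-x\|<\eta/(4L)$, each of the three terms in the second difference at $y$ differs from its counterpart at $x$ by at most $L\|y-x\|$, uniformly in $h$. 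The total change is at most $4L\|y-x\|<\eta$, so $y\in G_n$ with the same $\delta$.

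The only delicate point is arranging a single neighbourhood on which the Lipschitz bound holds and which contains all the points $y\pm\delta h$. This is handled by choosing $\delta$ small relative to the radius of the Lipschitz ball, as described above.
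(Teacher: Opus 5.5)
Your proof is correct and follows the paper's argument. It uses the same sets $G_n$ (your existence of a $\delta$ with supremum $<\delta/n$ is the paper's condition $\inf_{\delta}\sup<1/n$), the same monotonicity of the symmetric difference quotient to get $G=\bigcap_n G_n$, and the same $4L\|y-x\|$ Lipschitz perturbation estimate with $\delta$ held fixed to show each $G_n$ is open. You are in fact more careful than the paper, both in proving the second-difference characterization via a subgradient and in shrinking $\delta$ so that every point $y\pm\delta h$ lies in the Lipschitz ball, which the paper leaves implicit; note only that local Lipschitzness of continuous convex functions is the standard fact the paper invokes, not what Proposition~\ref{ocl} states.
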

\begin{proof}
Let $f$ be a continuous and convex function defined on the set $D$. Let $S_{X}$ be the unit sphere  of $X$.  For each $n\in\mathbb{N}$ define the following set:
$$G_n=\{x\in D: \inf_{\delta > 0} \sup_{y\in S_{X}} \frac{f(x+\delta y)+f(x-\delta y)-2f(x)}{\delta} < \frac{1}{n}\}.$$
The function $f$ is convex, so it is easy to see that $$\frac{f(x+\delta y)+f(x-\delta y)-2f(x)}{\delta}$$ is decreasing  as $\delta\searrow 0^{+}$.
 Consequently, one gets that $G=\bigcap_{n\in\mathbb{N}} G_n$. Therefore it is enough to show that each set $G_n$ is open.

Fix arbitrary $x\in G_n$.  Since every  continuous convex function is locally  Lipschitz, we can find $\delta_1> 0$ and $M >0$ such that for all $u,v\in B(x,\delta_1)$ one has
$$|f(u)-f(v)|\leq M\|u-v\|.$$

Next, there exist $\delta >0$ and $r >0$ such  that $x\pm \delta y\in D$  and
$$\frac{f(x+\delta y)+f(x-\delta y)-2f(x)}{\delta}\leq r < \frac{1}{n},$$
for all $y$ with $\|y\|=1$.

Take $0<\delta_2< min\{\delta_1, \delta\}$  such that $z\pm \delta_2y\in D$ with $\|y\|=1$ and $r+4M\delta^{-1}\delta_2 < \frac{1}{n}.$  We prove that $B(x,\delta_2)\subset G_n$.
Take $z\in B(x,\delta_2)\subset D.$
Then we get
$$\delta^{-1}|f(z+\delta y))+f(z-\delta y)-2f(z)|\leq$$
$$\delta^{-1}|f(x+\delta y)+f(x-\delta y)-2f(x)| + \delta^{-1}|f(z+\delta y)-f(x+\delta y)|+$$
$$\delta^{-1}|f(z-\delta y)-f(x-\delta y)|+ 2\delta^{-1}|f(z)-f(x)|\leq $$
$$r+4M\delta^{-1}\|z-x\|\leq r+4M\delta^{-1}\delta_2< \frac{1}{n}.$$
 The proof is completed.
\end{proof}
\noindent Consequently, the classes of Banach ASP and FDS spaces coincide.

The classification of locally convex spaces according to the dense and generic differentiability properties of continuous convex functions may be viewed as a natural extension of the pioneering work of E. Asplund \cite{Asplund}, D.G. Larman and R.R. Phelps \cite{Larman}, and I. Namioka and R.R. Phelps \cite{np}, where the theory was developed exclusively in the setting of Banach spaces.

However, several phenomena demonstrate that the relationship between dense and generic differentiability is considerably more subtle than one might initially expect. For instance, M. Talagrand \cite{Tal1} constructed a $1$-Lipschitz function on a Banach space $C(K)$, for a suitable compact space $K$, whose set of G\^ateaux differentiability points is dense but of first category.

Furthermore, M. \v{C}oban and P. Kenderov \cite{Coban} showed that even when the set of G\^ateaux differentiability points of the supremum norm on a space $C(K)$ is dense, it may fail to contain a dense $G_\delta$ subset. A notable example is provided by the double-arrow compact space. One should also recall the intricate example \cite{Moors} of a Banach space that is GDS but not weak Asplund.

A systematic study of locally convex spaces belonging to the classes ASP, WASP, FDS, and GDS was initiated by B. Sharp in 1990 \cite{s}. This line of research was subsequently developed in a series of joint papers \cite{es, Eyland_Sharp2, Eyland_Sharp3}, which established many of the basic properties of these classes. Unfortunately, these significant contributions have not always been adequately reflected in later developments of the subject. In our recent paper \cite{KL3}, we revisited this circle of ideas, clarified several historical and bibliographical issues, and investigated the preservation of the WASP and ASP properties under arbitrary products of locally convex spaces.

We claim no novelty for the statements in this survey article either.
Our aim is to present all major results with self-contained complete proofs, making the paper accessible to both specialists and newcomers.

\section{Namioka--Phelps theorem and additional supplementing results}

Assume first that Fr\'echet differentiability points of continuous convex functions can be detected on separable Banach spaces, that is, on Banach spaces possessing a countable dense subset.

A natural question then arises: can such ``separable'' information be lifted to an arbitrary Banach space? More precisely, if a continuous convex function $f$ is defined on a Banach space $E$, can one infer that $f$ admits at least one point of Fr\'echet differentiability from the corresponding result established on suitable separable subspaces of $E$?

The answer is affirmative and relies on the existence of appropriate separable reduction techniques for Fr\'echet differentiability.

Our next auxiliary result is motivated by the property formulated in Proposition~\ref{Phelps}.
For the notions appearing in Proposition \ref{111} below, we refer the reader to \cite[p.~317--321]{y1} and \cite[Chapter~11, pp.~625--629]{y2}.
In particular, if $f$ is concave and the Fr\'echet superdifferential $\partial_F f(x)$ is nonempty, then $f$ is Fr\'echet differentiable at $x$ and
$\partial_Ff(x)=\{f'(x)\}$.

\begin{proposition}\label{111} Let $(E,\nn)$ be a (separable) Banach space whose dual $E^*$ is separable,
let $D\subset E$ be an open set, and let $f:D\longrightarrow(-\infty,\infty]$ be a lower semi-continuous function.

\noindent Then the set of all $x\in D$, where the Fr\'echet subdifferential $\partial_F f(x)$
is nonempty, is dense in $D$.
\end{proposition}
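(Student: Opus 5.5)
Since $E^*$ is separable, the plan is to build an equivalent norm on $E$ that is Fr\'echet smooth off the origin, and then run a smooth-bump variational argument. Precisely, we use the classical Kadec--Klee renorming: fix a dense sequence $(x_n^*)$ in the unit ball of $E^*$ and a dense sequence $(x_n)$ in $E$. The dual norm is modified as
$$|x^*|^2:=\|x^*\|^2+\sum_n 2^{-n}\big(\langle x^*,x_n\rangle^2+\mathrm{dist}(x^*,\mathrm{span}\{x_1^*,\dots,x_n^*\})^2\big).$$
This is a $w^*$-lower semicontinuous equivalent dual norm that is locally uniformly rotund (LUR). Its predual norm on $E$ is then Fr\'echet differentiable on $E\setminus\{0\}$, by the standard Šmulyan-type duality between LUR of the dual norm and Fr\'echet smoothness of the predual norm. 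Consequently $x\mapsto |x|^2$ is a Fr\'echet differentiable function on all of $E$. I expect this renorming step to be the main obstacle, specifically checking that the dual norm is LUR using the distance terms together with $w^*$-convergence. If needed, I would instead assume the existence of a Fr\'echet smooth bump function on spaces with separable dual as a known fact.

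Next, we fix $x_0\in D$ and $\varepsilon>0$; the goal is a point $x\in B(x_0,\varepsilon)\cap D$ with $\partial_F f(x)\neq\emptyset$. First we shrink $\varepsilon$ so that $\overline B(x_0,\varepsilon)\subset D$, where the ball is taken in the new norm $|\cdot|$. If $f\equiv+\infty$ on this ball, the statement is vacuous in that region, so we assume $f$ is finite somewhere there, which is the implicit reading of the statement. Since $f$ is lower semicontinuous, after shrinking the ball further we may also assume $f$ is bounded below on it. Then we consider
$$g(x):=f(x)+\frac{1}{\varepsilon^2-|x-x_0|^2}\ \text{ on } B(x_0,\varepsilon),\qquad g:=+\infty \text{ outside}.$$
This $g$ is lower semicontinuous, bounded below, and proper.

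We then apply a smooth variational principle to $g$. The Borwein--Preiss principle with the Fr\'echet smooth function $|\cdot|^2$ yields a point $x$ and a function $\phi=\sum_i\mu_i|\cdot-v_i|^2$, which is Fr\'echet differentiable with arbitrarily small derivative, such that $g+\phi$ attains its minimum at $x$. Because $g\to\infty$ at the boundary of the ball, we get $x\in B(x_0,\varepsilon)$. Writing $h(y):=(\varepsilon^2-|y-x_0|^2)^{-1}$, which is Fr\'echet differentiable near $x$, the function $f-(-h-\phi)$ has a local minimum at $x$. Hence $-(h+\phi)'(x)\in\partial_F f(x)$, and since $x_0$ and $\varepsilon$ were arbitrary, this proves density.
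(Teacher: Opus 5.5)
Your proposal is correct and follows essentially the same route as the paper: the same Kadec--Klee LUR dual renorming combined with the \v{S}mulyan test, a barrier on a small ball that blows up at its boundary (yours is $(\varepsilon^2-|x-x_0|^2)^{-1}$, the paper's is $\tan^2(\frac{\pi}{2\varepsilon'}|x-\overline{x}|)$), and the Borwein--Preiss principle, from which an element of $\partial_F f$ is read off at the minimizer. One small fix: take $(x_n)$ dense in the unit ball of $E$, as the paper does, rather than in all of $E$, since otherwise $\sum_n 2^{-n}\langle x^*,x_n\rangle^2$ may diverge and $|\cdot|$ need not be an equivalent norm.
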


\begin{proof} We first realize that $E$ admits an equivalent Fr\'echet smooth (off the origin) norm. Indeed,
let $\{x_1, x_2,\ldots\}$ be a countable dense subset of the unit ball $S_E$ of $E$ and $\{\xi_1, \xi_2, \ldots\}$ be a countable dense subset of the unit ball $S_{E^*}$ of the dual $E^*$. Then the assignment sending $x^*\in E^*$ to
$$
|x^*|=\sqrt{\|x^*\|^2 + \hbox{$\sum_{n=1}^\infty 2^{-n}$}\langle x^*,x_n\rangle^2+\hbox{$\sum_{n=1}^\infty 2^{-n}$}{\rm dist}\, (x^*,{\rm sp}\{\xi_1,\ldots,\xi_n\}\big)^2}
$$
is an equivalent weak$^*$ lower semi-continuous norm on $E^*$. For completeness, we provide the proof of the subadditivity of $|\cdot|$, i.e., why the inequality $$|x^*+y^*| \le |x^*|+|y^*|$$ holds,
as explicit verifications are rarely detailed in standard literature.

A possible argument may exploit the subadditivity of the famous (Euclidean) $\ell_2$-norm.

A `small' Kadec--Klee like effort, see \cite[p. 113-117]{d}, reveals that
the norm $|\cdot|$ is locally uniformly rotund, i.e., if $x^*, x^*_1,\ x^*_2,\ldots$ are elements of $E^*$ such that
$$2|x^*|^2+2|x_i^*|^2-|x^*+x^*_i|^2\to0$$ when $i\to\infty$, then $|x^*_i-x^*|\to 0$ when $i\to\infty$.
Assume this is not so. Then, find an $\ee>0$ and an infinite set $N\subset \N$ such that $\|x^*_i-x^*\| > 3\ee$
for every $i\in N$.
Find $m\in N$ so big that dist$(x^*,{\rm sp}\{\xi_1,\ldots,\xi_m\}\big)<\ee$.

A simple convexity argument provides an infinite set $N_1\subset N$ such that
dist$(x^*_i,{\rm sp}\{\xi_1,\ldots,\xi_m\}\big)<\ee$ for every $i\in N_1$. For every such $i$  find
$$y^*_i\in {\rm sp}\{\xi_1,\ldots,\xi_m\}$$ such that $\|x^*_i-y^*_i\|<\ee$, and so,
$$
\|y^*_i-x^*\| \ge \|x^*_i-x^*\| - \|x^*_i-y^*_i\| > 3\ee -\ee=2\ee.
$$

The sequence $(y^*_i)_{i\in N_1}$ is bounded
because $\|x^*_i\|\to\|x^*\|$ as $i\to\infty$, by convexity. Further, since it lies in a finite-dimensional space, there are $y^*\in {\rm sp}\{\xi_1,\ldots,\xi_m\}$ and an infinite set
$N_2\subset N_1$ such that $\|y^*_i-y^*\|\to0$ as $N_2\ni i\to\infty$.
Hence $\|y^*-x^*\|\ge 2\ee.$

Find $n\in\N$ so that $$\|\langle y^*-x^*,x_n\rangle\| >\ee.$$
Another  simple convexity argument yields that $$\langle x^*_i,x_{n}\rangle \to \langle x^*,x_{n}\rangle$$ as $i\to\infty.$
Hence we have that
$$
(\ee>)\ \| y^*_i-x^*_i\| \ge  \langle y^*_i-x^*_i, x_{n}\rangle> \ee
$$
for all $i\in N_2$ big enough, a contradiction. We proved that our norm $|\cdot|$ on $E^*$ is LUR.

Now, having proved that our norm $|\cdot|$ on $E^*$ is LUR, the \v Smulyan test guarantees that the predual norm $|\cdot|$ on $E$, defined by
$$
E\ni x\longmapsto\sup\big\{\langle x^*,x\rangle:\ x^*\in E^*,\ |x^*|\le1\big\}=:|x|,
$$
is Fr\'echet differentiable at every nonzero point of $E$. For more details, see \cite[p. 43]{dgz}.

Next, let $\overline x\in Df$ and $\ee>0$ be given. We will find a $v\in E$ such that
$|v-\overline x|<\ee$ and $\partial_F f(v)$ is nonempty.

From the lower semi-continuity of $f$ find $\ee'\in(0,\ee)$ so small that $B(\overline x,\ee')\subset D$
and that $f$ is bounded below on the open ball
$B(\overline x,\ee')$. Define
\begin{eqnarray*}
\vv(x):=\begin{cases}
          \big(\tan(\hbox{$\frac\pi{2\ee'}$} |x-\overline x|)\big)^2 & \text{if $x\in B(\xx,\ee')$}\\
          \infty & \text{if $x\in E\setminus B(\xx,\ee')$}.
        \end{cases}
\end{eqnarray*}
Then $\vv:E\longrightarrow[0,\infty]$ is easily seen to be proper and lower semi-continuous.

Now, the Borwein--Preiss variational principle \cite[Theorem 4.20]{p} provides a Fr\'echet differentiable function
$\theta: D\longrightarrow[0,\infty)$ such that the sum $f+\vv+\theta$ attains infimum at some $v\in D$; clearly,
$v\in B(\xx,\ee')$. We thus have
$$
f(v+h)+\vv(v+h)+\theta(v+h) \ge f(v)+\vv(v)+\theta(v)\quad{\rm for\ every}\quad h\in E,
$$
and so
$$
f(v+h)-f(v)+\langle \vv'(v)+\theta'(v),h\rangle \ge -o(|h|)\quad{\rm as}\ \ h\in E\ \ {\rm and}\  \ |h|\to0.
$$
We proved that $-\vv'(v)-\theta'(v)\in \partial_F f(v)$, and hence $\partial_F f(v)\neq\emptyset$.
\end{proof}
\begin{remark}
The proof above may be viewed as a refinement of an earlier approach based solely on Ekeland's variational principle. Indeed, before the advent of the Borwein--Preiss principle, essentially the same conclusion could be obtained by using Ekeland's principle alone.
\end{remark}
Having established Proposition~\ref{111} in the separable case, one is naturally led to ask whether the separability assumption is really necessary. Remarkably, it is not. The proposition admits an extension to arbitrary Asplund spaces, and the roots of this result can be traced back to Gregory's work \cite[Theorem~2.14]{p}.

The contrast with G\^ateaux differentiability is striking. While the Fr\'echet theory extends successfully to the nonseparable setting, an analogous statement for G\^ateaux differentiability is simply false. A classical obstruction is provided by the limsup norm on $\ell_\infty$, which serves as a counterexample.
\begin{proposition}\label{g}
Let $(E,\nn)$ be a non-separable Banach space, $f:D\rightarrow\R$ be a  convex continuous function
defined on an open convex set $D\subset E$, and $Z$ be a separable subspace of $E$.

\noindent Then there exists a separable subspace $Y$ of $E$, containing $Z$, with $D\cap Y$ nonempty, and such that, if
the restriction $f|_Y$ of $f$ to $Y$ is Fr\'echet differentiable at some $x\in D\cap Y$, then  $f$ is
Fr\'echet differentiable at $x$.
\end{proposition}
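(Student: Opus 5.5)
This is a separable reduction, done by a closing-off (countable saturation) argument. The idea is to use the symmetric second-difference quotient from the proof of Proposition~\ref{Phelps}. For $x\in D$, $\delta>0$ and a set $S$ of directions, put
\[
Q(x,\delta,S):=\sup_{y\in S}\frac{f(x+\delta y)+f(x-\delta y)-2f(x)}{\delta}.
\]
By the convexity remark in that proof, $f$ is Fr\'echet differentiable at $x$ iff $\inf_{\delta>0}Q(x,\delta,S_E)=0$. Likewise, $f|_Y$ is Fr\'echet differentiable at $x\in D\cap Y$ iff $\inf_{\delta>0}Q(x,\delta,S_Y)=0$.

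So it suffices to build a closed separable $Y\supset Z$ with $D\cap Y\neq\emptyset$ such that for every $x\in D\cap Y$ and every rational $\delta>0$ with $x\pm\delta B_E\subset D$ we have
\[
Q(x,\delta,S_E)=Q(x,\delta,S_Y).
\]
The inequality $\ge$ is trivial, so only $\le$ needs arranging. Then $\inf_\delta Q(x,\delta,S_Y)=0$ forces $\inf_\delta Q(x,\delta,S_E)=0$. Here we use monotonicity in $\delta$, so rational $\delta$ suffice.

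The construction goes as follows. Start with $Y_0:=\overline{\mathrm{span}}(Z\cup\{x_0\})$ for some fixed $x_0\in D$. Inductively, given a separable closed $Y_n$, fix a countable dense set $C_n\subset Y_n$. For each $c\in C_n$ and each rational $\delta>0$ with $c\pm\delta B_E\subset D$, choose countably many $y_{c,\delta,k}\in S_E$ with
\[
Q(c,\delta,\{y_{c,\delta,k}\}_k)=Q(c,\delta,S_E).
\]
This is possible since a supremum is approximated by countably many points; note that $Q(c,\delta,S_E)<\infty$ by local boundedness, at least for $\delta$ small. Let $Y_{n+1}$ be the closed span of $Y_n$ and all these $y_{c,\delta,k}$, and put $Y:=\overline{\bigcup_n Y_n}$, which is separable and contains $Z$ and $x_0$.

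The main obstacle is passing from the countable dense points $c$ to an arbitrary $x\in D\cap Y$ and a fixed $\delta$. The tool is the local Lipschitz property of $f$, as in the proof of Proposition~\ref{Phelps}: near $x$, $f$ is $M$-Lipschitz on a ball, and if $\|c-x\|<\eta$ then
\[
|Q(c,\delta,S)-Q(x,\delta,S)|\le 4M\eta/\delta
\]
for every $S\subset S_E$. This is uniform in $S$, which is exactly what we need. One subtlety is that $x\pm\delta y$ must stay in the Lipschitz ball. To ensure this, restrict to rational $\delta$ small enough that $B(x,2\delta)$ lies inside a ball where $f$ is $M$-Lipschitz; this costs nothing because only $\delta\to0$ matters. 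Given $x\in D\cap Y$ and such a $\delta$, pick $c\in\bigcup_n C_n$ with $\|c-x\|<\eta$, so that $c\pm\delta B_E\subset D$ as well. Then
\[
Q(x,\delta,S_E)\le Q(c,\delta,S_E)+4M\eta/\delta=Q(c,\delta,S_Y)+4M\eta/\delta\le Q(x,\delta,S_Y)+8M\eta/\delta .
\]
Letting $\eta\to0$ gives the desired equality, and hence the proposition.
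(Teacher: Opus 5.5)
Your proof is correct and follows essentially the same closing-off argument as the paper. At points of a countable dense set you saturate with near-worst directions for the symmetric second difference at rational radii, and then you pass to an arbitrary $x\in D\cap Y$ via the local Lipschitz bound. The differences are cosmetic: the paper takes a single direction $u(c,t)$ with error $t^2$ and a point $c$ with $\|c-x\|<t^2$, whereas you realize the supremum exactly with countably many directions and let $\eta\to0$ at fixed $\delta$, which gives the slightly cleaner identity $Q(x,\delta,S_E)=Q(x,\delta,S_Y)$. Putting $x_0$ into $Y_0$ also explicitly secures $D\cap Y\neq\emptyset$, which the paper leaves implicit.
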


\begin{proof}
First, we need a  translation
of Fr\'echet differentiability (of convex functions)  to the terms of the space $E$:
\sl $f$ is Fr\'echet differentiable at $x\in E$ if and only if
$$
S(x,t):=\sup_{h\in B_X}\big(f(x+th)+f(x-th)\big)=2f(x)+o(t)\ \ {\rm as}\ \ \ Q_+\ni t\downarrow 0;
$$
\rm this is easy to check. (Here and below, $Q_+$ means the set of all positive rational numbers). 
For any $x\in E$ and any $t>0$, if $S(x,t)<\infty$, we find a vector $u(x,t)\in B_E$ such that
\begin{eqnarray}\label{22}
f\big(x+ t\, u(x,t)\big) +f\big(x-t\,u(x,t)\big) > S(x,t)-t^2.
\end{eqnarray}
(This $u(x,t)$ is almost ``the worst possible'' regarding the
Fr\'echet differentiability of $f$ at $x$).
Let $C_0$ be a countable dense subset of $Z$.

We  construct countable sets
$$C_0\subset C_1\subset C_2\subset\cdots\subset E$$ as follows. Let $m\in\mathbb{N}$ be given and assume
that $C_{m-1}$ was already found.

Find a countable set $C_m$ in $E$ that
 is stable under taking all finite linear combinations with rational coefficients, and such that
it contains $C_{m-1}$ as well as the set $$\big\{u(x,t):\ x\in C_{m-1},\ t\in Q_{+}\};$$
clearly, $C_m$ is again countable.
Do so for every $m\in\mathbb{N}$, and put finally $Y:=\overline{C_1\cup C_2\cup\cdots}\,$.
Clearly, $Y\supset Z$.

We claim that this $Y$ has the desired property. So, assume that
$f|_Y$ is Fr\'echet differentiable at some $x\in Y$. We show that the function $f$ itself is
Fr\'echet differentiable at $x$ as well. (If $x\in C_1\cup C_2\cup\cdots$, then the argument is straightforward.
So, we have to find an argument working also for $x\in Y\setminus C_1\cup C_2\cup\cdots$).

Let $L$ denote a Lipschitz constant of $f$ in a vicinity of $x$; see \cite[Proposition 1.6]{p}.
Pick any $t\in Q_+$ small enough (so that we can exploit the $L$-Lipschitz property of $f$ around $x$).
Then, find $c\in\bigcup_{m=1}^\infty C_m$ such that $\|c-x\|<t^2$. We can now
subsequently estimate for all sufficiently small $t\in Q_{+}$ (so that we can use the
Lipschitz property of $f$).
\begin{eqnarray*}
2f(x)&\le& S(x,t) < S(c,t)+ 2Lt^{2}\\
&<& f\big(c+ t\,  u(c,t)\big)+f(c-t\, u(c,t)\big)+t^2 + 2Lt^{2}\\
&<& f\big(x+ t\,  u(c,t)\big)+f(x-t\, u(c,t)\big)+t^2 + 4Lt^{2}\\
&\le&   \sup_{k\in B_Y}\big(f(x+ t  k)+f(x-tk)\big)+t^{2} + 4 Lt^{2}\\
&=&o(t) + 2f(x)\quad {\rm as}\quad Q_{+}\!\backepsilon t\downarrow0
\end{eqnarray*}
since $f|_Y$ is Fr\'echet differentiable at $x$.
Therefore, the ``whole'' $f$ is Fr\'echet differentiable at $x$.
\end{proof}
The proof of our main result, Theorem~\ref{MAIN}, relies on several auxiliary results of independent interest. Among them, a central role is played by the celebrated theorem of I. Namioka and R.R. Phelps, a cornerstone of modern differentiability theory; see \cite[Lemma~VI.8.3]{dgz}, \cite[Theorem~14.25]{y2}, or \cite[Theorem~18]{np}. For a broader perspective on this circle of ideas, we also recommend the monograph \cite{f}. Another very useful relevant source of information is the survey article \cite{Yost}.

The proof of Theorem~\ref{1} borrows certain topological arguments from \cite[pp.~626--627]{y2}; see also \cite[Main Theorem]{Pelczynski-Semadeni}. It is worth emphasizing, however, that our approach is entirely independent of differentiability techniques and relies solely on topological methods.
\begin{theorem}\label{1} For a compact space $X$ the following assertions are equivalent:
 \begin{enumerate}
  \item $X$ is scattered.
  \item Every separable subspace of $C(X)$ has a separable dual.
  \item The Banach space $C(X)$ does not contain a copy of $\ell_1$ isomorphically.
 \end{enumerate}
\end{theorem}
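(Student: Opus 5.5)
I will prove the cycle (1)$\Rightarrow$(2)$\Rightarrow$(3)$\Rightarrow$(1).

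\emph{(1)$\Rightarrow$(2).} Let $Y\subset C(X)$ be separable, with countable dense set $\{f_1,f_2,\ldots\}$. Consider the map $\pi:X\to\R^{\N}$, $\pi(x)=(f_n(x))_n$, and let $L=\pi[X]$. It is a compact metrizable space, and it is scattered by Theorem~\ref{sca}(1). By Theorem~\ref{ordinal}, $L$ is homeomorphic to a countable compact ordinal; in particular $L$ is countable. Every $f\in Y$ factors as $f=g\circ\pi$ with $g\in C(L)$, since this holds for each $f_n$ and passes to uniform limits. Thus $Y$ embeds isometrically into $C(L)$ via $g\mapsto g\circ\pi$.

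Since $L$ is countable compact, every Radon measure on $L$ is atomic, so $C(L)^*=\ell_1(L)$, which is separable. Then $Y^*$ is a quotient of $C(L)^*$ by Hahn--Banach restriction, and hence $Y^*$ is separable. The one point to justify is that a regular Borel measure on a countable compact space is $\sum_{x\in L}\mu(\{x\})\delta_x$, which follows from countable additivity.

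\emph{(2)$\Rightarrow$(3).} If $C(X)$ contained a copy $Y$ of $\ell_1$, then $Y$ would be separable while $Y^*\cong\ell_\infty$ is nonseparable. This contradicts (2).

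\emph{(3)$\Rightarrow$(1).} This is the main step, and it should be purely topological. Suppose $X$ is not scattered. By Theorem~\ref{sca}(2) there is a continuous surjection $\varphi:X\to[0,1]$. Compose $\varphi$ with a continuous surjection $[0,1]\to\{0,1\}^{\N}$ onto the Cantor set; one exists because the Cantor set is a continuous image of $[0,1]$, for instance via a Peano-type map onto $[0,1]^2$ or directly by Alexandroff--Hausdorff arguments. A cleaner alternative avoids this: use Lemma~\ref{scattered} to get a nonempty perfect closed $P\subset X$, and build by the standard dyadic splitting a Cantor scheme of nonempty closed sets together with Urysohn functions. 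This yields, for each $n$, functions $g_n\in C(X)$ with values in $[0,1]$, and closed sets $A_n^0,A_n^1$ with $g_n=0$ on $A_n^0$ and $g_n=1$ on $A_n^1$, such that every finite intersection $\bigcap_{n\le k}A_n^{\varepsilon_n}$ is nonempty. I would first try the simplest route: obtain a continuous surjection $\psi:X\to\{0,1\}^{\N}$ and set $h_n=(2p_n-1)\circ\psi$ extended appropriately. Since $\{0,1\}^{\N}$ is not a subspace of $X$, I would instead take $h_n=2\theta_n\circ\varphi-1$, where the $\theta_n$ are Rademacher-like continuous functions on $[0,1]$ adapted to the independence pattern.

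The key estimate is then the $\ell_1$-lower bound
$$\Bigl\|\sum_{n\le k} a_nh_n\Bigr\|_\infty\ge \sum_{n\le k}|a_n|.$$
To prove it, choose $\varepsilon_n=\operatorname{sign}(a_n)$ and evaluate at a point of $\bigcap_{n\le k}\{h_n=\varepsilon_n\}$, which is nonempty by the independence. Together with the trivial upper bound $\|\sum a_nh_n\|_\infty\le\sum|a_n|$, this gives an isometric copy of $\ell_1$ in $C(X)$, contradicting (3).

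The main obstacle is arranging this independent family exactly, including the requirement that $h_n=\pm1$ holds on the relevant closed sets. Doing it via the perfect set $P$, Urysohn functions on $P$, and Proposition~\ref{T} to extend from $P$ to $X$ seems the most robust way.
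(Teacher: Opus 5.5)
Your arguments for (1)$\Rightarrow$(2) and (2)$\Rightarrow$(3) are the paper's: factor $Y$ isometrically through $C(L)$ for the countable metrizable image $L$, use $C(L)^*=\ell_1(L)$, and use surjectivity of the adjoint. For (3)$\Rightarrow$(1) you take a genuinely different route. The paper composes with the surjection $\rho\colon X\to[0,1]$ from Theorem~\ref{sca}(2), so that $g\mapsto g\circ\rho$ embeds $C[0,1]$ isometrically into $C(X)$, and then quotes the Banach--Mazur universality of $C[0,1]$ to find $\ell_1$. You instead produce an explicit isometric copy of $\ell_1$ from an independent sequence $h_n$ with $|h_n|\le 1$, and your $\ell_1$-estimate is correct. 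The paper's argument is shorter given the cited results, but it uses a heavy theorem when only $\ell_1$ is needed. Yours is elementary, and in the perfect-set version it needs only Lemma~\ref{scattered} and Urysohn's lemma, bypassing Theorem~\ref{sca}(2).

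Delete two false asides; neither is needed. First, the Cantor set is not a continuous image of $[0,1]$: continuous images of connected spaces are connected, and Alexandroff--Hausdorff goes the other way. Second, a continuous surjection $X\to\{0,1\}^{\N}$ need not exist (take $X=[0,1]$).

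The step you flag as the main obstacle closes as follows. Choose $F_s=\overline{U_s}$ for $s\in\{0,1\}^{<\omega}$, with $U_s$ nonempty and relatively open in $P$, and with $F_{s0},F_{s1}\subset U_s$ disjoint. This is possible by splitting at two distinct points of $U_s$, which exist since $P$ has no isolated points, together with regularity. Set $A_n^{e}:=\bigcup_{|s|=n-1}F_{se}$ for $e\in\{0,1\}$. Then $A_n^0\cap A_n^1=\emptyset$, so Urysohn's lemma on the normal space $X$ gives $g_n$ directly; Proposition~\ref{T} is not needed. Put $h_n:=2g_n-1$. Moreover $\bigcap_{n\le k}A_n^{e_n}\supset F_{e_1\cdots e_k}\neq\emptyset$ for every $(e_1,\dots,e_k)\in\{0,1\}^k$.

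Applied to $P=[0,1]$, the same construction yields your $\theta_n$. Then $h_n=2\theta_n\circ\varphi-1$ works because $\varphi$ is onto. That variant is precisely the paper's route with Banach--Mazur replaced by an explicit $\ell_1$-sequence in $C[0,1]$.
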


\begin{proof}
(1)$\Longrightarrow$(2) Let $Y\subset C(X)$ be any separable subspace. Choose a countable and linearly independent set $N\subset B_Y$ such that its linear span is dense in $Y$.

Consider a correspondence
$$
X\ni x\longmapsto \{n(x):\ n\in N\}=:\psi(x)\in [-1,1]^{N}.
$$
Thus $\psi: X\longrightarrow [-1,1]^{N}$, and this is a continuous mapping. Denote
$L:=\psi(X)$; this is a metrizable compact space.
Since $X$ is scattered,  $L$ is scattered as well, by Theorem \ref{sca}. Moreover, $L$ is countable, by Theorem \ref{ordinal}.

Next, let us enumerate $L$ as $\{z_{n}:\ n\in N\}$.
It is known that every  linear continuous functional $x^*\in C(L)^{*}$ has the form
$$
x^*(f)=\sum_{n}a_n f(z_n),\quad f\in C(L),
$$
and $\|x^*\|=\sum_{n\in N} |a_n| < \infty$,
where $(a_n)_{n\in N}$ is a suitable element of $\ell_1(L)$, see \cite[Main Theorem (11)]{Pelczynski-Semadeni} or \cite[Corollary 19.7.7]{Semadeni}.
Consequently, $C(L)^{*}$ is isometric to $\ell_1(L)$, and this implies that $C(L)^*$ is separable.

We will show that the subspace $Y\subset C(X)$ is isometric to a subspace of $C(L)$.  
For $n\in N$ and $x\in X$ we put
$$
\tilde n(\psi(x)):= n(x).
$$
The $\tilde n$ is a well defined function on $L$. Indeed, if $\psi(x)=\vv(h)$ for some $x, h\in X$, then  $\tilde n(\psi(x))= n(x)=n(h)=\tilde n(\psi(h))$.

Fix any $n\in N$. The function $\tilde n$ belongs to $C(L)$. Indeed, consider a net $(\psi(x_\alpha))$ in $L$ converging to some $\psi(x)\in L$.
This means that $m(x_\alpha)\to m(x)$ for every $m\in N$. Thus, in particular, $n(x_\alpha)\to n(h)$. Therefore, $$\tilde n(\psi(x_\alpha))=n(x_\alpha)\longrightarrow n(x)= \tilde n(\psi(x)).$$

Now, for any element $(a_n)_{n\in N} \in c_{00}(N)$ we have
\begin{eqnarray*}
\Big\|\sum_{n\in N} a_n \tilde n\Big\|&=& \sup_{l\in L}\Big|\sum_{n\in N} a_n\tilde n (l)\Big| =
\sup_{x\in X}\Big|\sum_{n\in N} a_n\tilde n (\psi(x))\Big| \cr
&=& \sup_{x\in X}\Big|\sum_{n\in N} a_nn(x)\Big|= \Big\|\sum_{n\in N}a_nn\Big\|.
\end{eqnarray*}
(In both $C(L)$ and $C(X)$, we considered the maximum norm $\|\cdot\|$).
Hence the mapping
$$
{\rm sp}\, N\ni \sum_{n\in N}a_nn\longmapsto \sum_{n\in N} a_n\tilde n \in C(L)
$$
is well defined, and it is a linear isometry.  Also, we know that the linear span sp$(N)$ is dense in $Y$. Thus, we can uniquely extend this mapping to the
linear isometry defined on the whole $Y$ into $C(L)$, say $T: Y\hookrightarrow C(L)$. 

Finally, since the operator $T$ is injective, a simple consequence of the Hahn--Banach theorem reveals that the adjoint operator $T^*: C(L)^*\to Y^*$ is
surjective. Recalling that $C(L)^*$ was separable, $Y^*$ must be separable as well by the Hahn-Banach Theorem.

\smallskip
(2)$\Longrightarrow$(3) Is clear since the dual of $\ell_1$ is isometric to the non-separable space $\ell_\infty\,$.

\smallskip
(3)$\Longrightarrow$(1) Assume on the contrary that $X$ is not scattered. By Theorem \ref{sca}, there is a continuous surjection $\rho: X\twoheadrightarrow [0,1]$.
Then the adjoint mapping $\rho^{*}: C[0,1]\hookrightarrow C(X)$ is a linear isometry into. However, the Banach space $C[0,1]$ is universal for the class of all separable Banach spaces by the
Banach--Mazur theorem, see \cite[Proposition 1.5]{Pelczynski-Bessaga}. Therefore, $C(X)$ contains an isometric copy of $\ell_1$. This contradiction completes the proof.
\end{proof}
A crucial ingredient in the proof of Theorem~\ref{MAIN} is the following celebrated result of I. Namioka and R.R. Phelps \cite{np}, one of the cornerstones of the differentiability theory of Banach spaces $C(X)$. Since it plays a central role in our approach, we present a complete proof.
\begin{theorem}[Namioka--Phelps]\label{Namioka+Phelps} For every compact space $X$,
the Banach space $C(X)$ is Asplund if and only if $X$ is scattered.
\end{theorem}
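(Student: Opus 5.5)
The equivalence splits into two implications, and I will treat them separately.

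\emph{Scattered implies Asplund.} By Proposition~\ref{Phelps}, the set of points of Fr\'echet differentiability of a continuous convex function on an open subset of a Banach space is always a $G_\delta$. So for $C(X)$ it suffices to prove density, that is, the FDS property.

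Fix an open convex $D\subset C(X)$, a continuous convex $f:D\to\R$, and a nonempty open $U\subset D$. Pick a point $u\in U$ and let $Z=\mathrm{sp}\{u\}$, a separable subspace. Proposition~\ref{g} gives a separable subspace $Y\supset Z$ such that Fr\'echet differentiability of $f|_Y$ at a point of $D\cap Y$ implies Fr\'echet differentiability of $f$ there. Since $X$ is scattered, Theorem~\ref{1} (1)$\Rightarrow$(2) shows that $Y^*$ is separable.

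Now apply Proposition~\ref{111} to the lower semicontinuous function $-f|_{D\cap Y}$ on the open set $D\cap Y\subset Y$. This yields density in $D\cap Y$ of the points where the Fr\'echet subdifferential of $-f$, equivalently the Fr\'echet superdifferential of $f$, is nonempty. Since $f$ is convex, each such point is a point of Fr\'echet differentiability. To see this, take $\xi$ in the Fr\'echet superdifferential at $x$ and any $\eta\in\partial f(x)$. Then
\[
\langle \eta,h\rangle\le f(x+h)-f(x)\le \langle\xi,h\rangle+o(\|h\|),
\]
which forces $\eta=\xi$ and gives $f(x+h)-f(x)-\langle\xi,h\rangle=o(\|h\|)$. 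This is exactly the remark made before Proposition~\ref{111}.

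Because $U\cap Y$ is a nonempty open subset of $D\cap Y$ (it contains $u$), it contains a point where $f|_Y$ is Fr\'echet differentiable. By the choice of $Y$, $f$ itself is Fr\'echet differentiable there. Hence the differentiability points are dense, and together with the $G_\delta$ property this gives the Asplund property.

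\emph{Asplund implies scattered.} Suppose $X$ is not scattered. By Theorem~\ref{sca}(2) there is a continuous surjection $\rho:X\to[0,1]$. Since $[0,1]$ contains a Cantor set $\Delta$, restricting to $\rho^{-1}(\Delta)$ gives a closed subset $K\subset X$ that admits a continuous surjection onto $\Delta$. The target function is the supremum norm, and I will show it is nowhere Fr\'echet differentiable on $C(X)$.

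The norm is a continuous convex function, so the claim reduces to the following. At any $g\neq 0$ there are points $x_n\in X$ with $|g(x_n)|\to\|g\|$ but with $|g(x_n)|$ not stabilising in any uniform way. One can then find small perturbations $h_n$ with $\|h_n\|\to 0$ such that
\[
\|g+h_n\|+\|g-h_n\|-2\|g\|\ge c\|h_n\|
\]
for some fixed $c>0$.

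The delicate case is when $g$ attains its norm at an isolated point, because then the norm can be Fr\'echet differentiable there. So I will not try to handle every $g$ directly. Instead I will exhibit an explicit open set of functions at which differentiability fails.

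\emph{Main obstacle and the route around it.} Producing that open set directly is awkward, so I expect to use a cleaner route: pull back along the isometric embedding $\rho^*:C[0,1]\hookrightarrow C(X)$. This requires the fact that the Asplund property passes to closed subspaces. I would prove that via Proposition~\ref{g}-style separable reduction, or by extending convex functions through $\mathrm{dist}(\cdot,\rho^*C[0,1])$-type constructions.

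Granting that, it suffices to show $C[0,1]$ is not Asplund. In $C[0,1]$ the supremum norm is nowhere Fr\'echet differentiable. Take $g\neq 0$ and a point $t_0$ with $|g(t_0)|=\|g\|$, and let $h_\delta$ be a tent of height $\delta$ centred at a point $t_1$ near $t_0$, where $t_1$ is chosen close enough that $|g(t_1)|>\|g\|-\delta/2$. Then both $\|g\pm h_\delta\|\ge\|g\|+\delta/2$ are impossible to fit into an $o(\delta)$ estimate, so the Fr\'echet condition fails at every $g\neq 0$.

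The heredity step for subspaces is where I expect most of the work.
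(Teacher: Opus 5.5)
Your proof of ``scattered $\Rightarrow$ Asplund'' is the paper's argument: separable reduction via Proposition~\ref{g}, separability of $Y^*$ from Theorem~\ref{1}, density via Proposition~\ref{111}, and the $G_\delta$ property from Proposition~\ref{Phelps}. Your sub/superdifferential computation correctly supplies the convexity step.

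The converse has a genuine gap. It rests entirely on the step you grant, that closed subspaces of Asplund Banach spaces are Asplund. This is a deep theorem (the paper itself calls it highly nontrivial), and neither route you suggest gives it. Proposition~\ref{g} says that for a suitably \emph{constructed} separable $Y$, differentiability of $f|_Y$ lifts to $f$. Heredity goes the other way: for a \emph{given} closed subspace $Y$ and a convex function defined only on $Y$, you must find differentiability points inside $Y$. Extending the function to some $F$ on the big space and invoking its Asplund property only yields a dense $G_\delta$ set of differentiability points of $F$. That set need not meet the nowhere dense set $Y$ at all. For an extension of the form $F=\tilde f+\operatorname{dist}(\cdot,Y)$ with $\tilde f$ convex, it never does: $F'(y;h)+F'(y;-h)\ge 2\operatorname{dist}(h,Y)>0$ for $y\in Y$ and $h\notin Y$, so $F$ is not even G\^ateaux differentiable at any point of $Y$.

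The fix is to use a \emph{quotient} of $C(X)$ instead of a subspace. This is what the paper does, through the proof of (1)$\Rightarrow$(2) in Theorem~\ref{MAIN}.

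First, the supremum norm of $C(X)$ cannot be the witness in general. For the non-scattered space $X=\beta\mathbb{N}$, consider the functions $g$ for which $|g|$ peaks strictly at a single isolated point $n$. They form an open dense subset of $C(X)$. At each of them the norm locally coincides with the linear functional $g'\mapsto \operatorname{sgn}(g(n))\,g'(n)$, so it is Fr\'echet differentiable there.

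Instead, take a nonempty closed perfect $P\subset X$ (Lemma~\ref{scattered}) and set $\varphi(f):=\max f(P)$. This is the max functional of $C(P)$ composed with the restriction map $C(X)\to C(P)$, and it is convex and continuous on all of $C(X)$. Your tent argument, with Urysohn functions replacing tents, shows that $\varphi$ has no point of Fr\'echet differentiability:
\begin{itemize}
\item given $f$ and $\delta>0$, pick $p\in P$ with $f(p)=\varphi(f)$;
\item pick $q\in P\setminus\{p\}$ with $f(q)>\varphi(f)-\delta/2$, which is possible since $p$ is not isolated in $P$;
\item take a continuous $h\colon X\to[0,1]$ with $h(p)=0$ and $h(q)=1$;
\item then $\varphi(f+\delta h)+\varphi(f-\delta h)-2\varphi(f)>\delta/2$ while $\|h\|=1$.
\end{itemize}
This refutes the Asplund property directly, with no heredity theorem and no Cantor set needed.

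Incidentally, your $C[0,1]$ estimate is misstated. Only one of $\|g\pm h_\delta\|$ exceeds $\|g\|+\delta/2$. The other is merely $\ge\|g\|$, and for that the tent must vanish at $t_0$.
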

\begin{proof}
The necessity follows from the more general Theorem~\ref{MAIN}. Indeed, it suffices to replace $X$ by $L$ in the proof of the implication
$(1)\Rightarrow(2)$ given there.

To prove the converse, assume that $X$ is scattered. Let $D\subset C(X)$ be an open convex set and let
$\vv:D\to\mathbb{R}$ be a continuous convex function. We shall show that $\vv$ is Fr\'echet differentiable on a dense subset of $D$.

Let $U$ be an arbitrary nonempty open subset of $D$. Choose a separable subspace
$Z\subset C(X)$ such that $U\cap Z\neq\varnothing$. Applying Proposition~\ref{g}, we obtain a separable superspace
\[
Z\subset Y\subset C(X)
\]
with the properties described there.

Since $X$ is scattered, Theorem~\ref{1} yields that the dual space $Y^{*}$ is separable. It follows from Proposition~\ref{111} that $Y$ is an Asplund space. Therefore, the restriction
$\vv|_{Y}$ is Fr\'echet differentiable at some point
\[
f\in U\cap Y,
\]
because $U\cap Y$ is a nonempty open subset of $Y$.

Finally, Proposition~\ref{g} ensures that the Fr\'echet differentiability of $\vv|_{Y}$ at $f$ implies the Fr\'echet differentiability of the original function $\vv$ at the same point. Thus every nonempty open subset of $D$ contains a point at which $\vv$ is Fr\'echet differentiable. Consequently, the set of Fr\'echet differentiability points of $\vv$ is dense in $D$.

The proof is completed by an application of Proposition~\ref{Phelps}.
\end{proof}
\section{Asplund spaces $C_k(X)$ over Tychonoff spaces $X$}
Before proving Theorem~\ref{MAIN}, we shall need the following result. A different proof can be found in \cite[Proposition~3.13]{KL1}.

For convenience, we also fix some terminology. Let $X$ be a topological vector space, $Y$ a Banach space, and $T:Y\to X$ a linear mapping. The operator $T$ is called an {\it embedding} (or an {\it isomorphism into} $X$) whenever $T$ is injective and continuous, and the inverse mapping
$T^{-1}:T[Y]\to Y$ is continuous. Equivalently, $T$ induces a topological isomorphism of $Y$ onto its image $T[Y]$.

Throughout the paper, $\stackrel{\circ}{B}_Y$ denotes the open unit ball of $Y$.
\begin{lemma}\label{ku} Let $X$ be a Tychonoff space and assume that there is a Banach space $Y$ which is isomorphic to a subspace of $C_k(X)$.

\noindent Then there exists a compact set $K\subset X$ such that $Y$ is isomorphic to a subspace of the (Banach) space $C(K)$.
\end{lemma}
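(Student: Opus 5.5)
Let $T:Y\to C_k(X)$ be an embedding. The plan is to use continuity of $T^{-1}$ at the origin to find one compact set $K$ whose seminorm already controls the norm of $Y$. Then we restrict functions to $K$.

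Since $T^{-1}:T[Y]\to Y$ is continuous at $0$, the preimage of $\stackrel{\circ}{B}_Y$ is a neighbourhood of $0$ in $T[Y]$. So there are a nonempty compact $K\subset X$ and $\ee>0$ such that
$$
V^\ee_K\cap T[Y]\subset T[\stackrel{\circ}{B}_Y].
$$
Write $p_K(g):=\sup_{x\in K}|g(x)|$. A standard homogeneity argument then gives
$$
\|y\|\le \ee^{-1}\,p_K(Ty)\qquad\text{for all } y\in Y.
$$
To see this, suppose $p_K(Ty)<\ee$ but $\|y\|\ge1$. Rescaling $y$ by a factor slightly larger than $1$ keeps $p_K$ below $\ee$ while putting the vector outside the open unit ball, which is a contradiction. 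Scaling then yields the inequality.

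In the other direction, $T$ is continuous from the Banach space $Y$, so the preimage of $V^1_K$ contains some ball $\delta\stackrel{\circ}{B}_Y$. This gives $p_K(Ty)\le \delta^{-1}\|y\|$.

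Now define $S:Y\to C(K)$ by $Sy:=(Ty)|_K$. Restriction to $K$ is linear, and $\|Sy\|_\infty=p_K(Ty)$. The two estimates together give
$$
\ee\|y\|\le\|Sy\|_\infty\le\delta^{-1}\|y\|.
$$
Hence $S$ is an isomorphism of $Y$ onto $S[Y]$, a subspace of $C(K)$. Since $Y$ is complete, $S[Y]$ is even closed in $C(K)$.

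The only step needing care is the first one: extracting a single compact set from the continuity of $T^{-1}$. This works because the sets $V^\ee_K$ form a neighbourhood base at $0$ in $C_k(X)$, so a single basic neighbourhood fits inside the preimage of $\stackrel{\circ}{B}_Y$. The homogeneity argument and the bounds are then routine. Proposition~\ref{T} is not needed here.
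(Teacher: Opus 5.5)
Your proof is correct and follows essentially the same route as the paper: you take a basic neighbourhood $V^\ee_K\cap T[Y]\subset T[\stackrel{\circ}{B}_Y]$ from continuity of $T^{-1}$, get an upper bound on the same $K$ from continuity of $T$, and restrict to $K$ to obtain $\ee\|y\|\le\|Sy\|_\infty\le\delta^{-1}\|y\|$. The rescaling in your homogeneity step is unnecessary: if $p_K(Ty)<\ee$ then $Ty\in T[\stackrel{\circ}{B}_Y]$, and injectivity of $T$ (which the paper invokes explicitly and you use only implicitly) gives $\|y\|<1$ directly.
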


\begin{proof}
Let $S:Y\hookrightarrow C_k(X)$ be the promised isomorphism into. Then there must exist some $\ee>0$, $\alpha>0$, and a compact subset $K\subset X$ such that
\begin{equation}\label{14}
V^\ee_K \cap S[Y] \subset S[\stackrel \circ B_Y]\subset V^{\ee/\alpha}_K\,.
\end{equation}

We show that the (Banach) space $Y$ is isomorphic to a subspace of the (Banach) space $C(K)$.
Define $\tilde S:Y\to C(K)$ by
$$
\tilde S(y):= (Sy)|_K, \quad y\in Y.
$$
$\tilde S$ is (linear and) injective. Indeed, take any $y\in Y$ such that $\tilde Sy=0$. This means that
$(Sy)|_K\equiv0$. Then, for every $n\in \N$ we have
$$
n Sy\in V^\ee_K\cap S[Y] \subset S[\stackrel \circ B_Y]
$$
by (\ref{14}), and so $Sy\in S[\frac1{n} \stackrel \circ B_Y]$, and hence $y\in \frac 1{n} \stackrel \circ B_Y$, as $S$ is injective. Therefore, $y=0$.

Now, fix any $y\in \stackrel \circ B_Y$. By (\ref{14}) $Sy \in V^{\ee/\alpha}_K$, i.e., $Sy[K] \subset (-\ee/\alpha,\ee/\alpha)$, and hence $\|\tilde Sy\| <\ee/\alpha$. We thus proved that
$$
\tilde S[\stackrel \circ B_Y] \subset \frac\ee\alpha \stackrel \circ B_{C(K)}.
$$
It remains to prove that
$\ee \stackrel \circ B_{C(K)}\cap \tilde S[Y] \subset \tilde S[\stackrel \circ B_Y]$.
Fix any $y\in Y$ such that $$\tilde Sy \in \varepsilon \stackrel \circ B_{C(K)};$$ thus $\|\tilde Sy\| <\varepsilon$.
Hence 
$Sy\in V^\ee_K \cap S[Y]$.
By (\ref{14}), there is a $y'\in \stackrel\circ B_Y$ so that $Sy=Sy'$. But $S$ is  injective. Hence $ y=y'$ and so $y\in \stackrel \circ B_Y$.
We proved that
$$\varepsilon \stackrel \circ B_{C(K)} \cap \tilde S[Y] \subset \tilde S[\stackrel \circ B_Y],$$
and therefore, $\tilde S$ is an isomorphism of $Y$ onto $\tilde S[Y] \  \ (\subset C(K)$). Summarizing, we obtained that
$$
\ee \stackrel \circ B_{C(K)}\cap \tilde S[Y] \subset \tilde S[\stackrel \circ B_Y] \subset \frac\ee\alpha \stackrel \circ B_{C(K)};
$$
that is, analytically (using the injectivity of $\tilde S$),
$$
\forall y\in Y\quad \ee\|y\|_Y \le  \|\tilde Sy\|_{C(K)} \le \frac\ee\alpha \|y\|_Y.$$
  \end{proof}

Following M. Komisarchik and M. Megrelishvili \cite{KM}, a locally convex space $E$ is said to have the \emph{Namioka--Phelps property} if every bounded subset $B\subset E$ is fragmented on each weak$^*$-compact equicontinuous subset $K\subset E^*$. Explicitly, this means that for every nonempty subset $A\subset K$ and every $\varepsilon>0$, there exists a weak$^*$-open set $U\subset E^*$ such that $U\cap A\neq\emptyset$ and
\[
\operatorname{diam}\{\langle v,x\rangle : x\in U\cap A,\ v\in B\}<\varepsilon.
\]

The equivalence $(1)\Leftrightarrow(2)$ in Theorem~\ref{MAIN} below was established in \cite[Corollary 3.8]{es} as a consequence of several intermediate results. Our proof is more direct and, to a large extent, self-contained.

The equivalences $(2)\Leftrightarrow(3)\Leftrightarrow(4)$ were proved in \cite{GKKM}, while the equivalences $(2)\Leftrightarrow(5)\Leftrightarrow(6)$ were obtained in \cite{KM}.
We shall not address the latter equivalences here, since their proofs rely on a substantial amount of fragmentation theory that lies beyond the scope of this paper. Our main objective is to present a self-contained proof of the classical Namioka--Phelps theorem.
\begin{theorem}\label{MAIN}
Let $X$ be a Tychonoff space. The following assertions are equivalent:
\begin{enumerate}
\item   $C_k(X)$ is an Asplund space.
\item   Every compact set in $X$ is scattered.
\item  The space $C_k(X)$ does not contain any isomorphic copy of $\ell_{1}$.
\item Every separable Banach subspace of $C_k(X)$ has separable dual.
\item $C_k(X)$ satisfies the Namioka--Phelps property.
\item $C_k(X)$ is tame, i.e., every weak$^{*}$-compact equicontinuous convex subset $K\subset C_k(X)^*$ is the closed (in the strong  topology) convex hull of the extreme points of $K$.
\end{enumerate}
\end{theorem}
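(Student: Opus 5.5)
The plan is to prove only the core equivalences $(1)\Leftrightarrow(2)\Leftrightarrow(3)\Leftrightarrow(4)$. For $(5)$ and $(6)$ the paper defers to \cite{KM}. The backbone is to pass between $C_k(X)$ and the Banach spaces $C(K)$, $K\subset X$ compact, using the restriction maps $r_K:C_k(X)\to C(K)$. Three tools do the work: the Claim in the proof of Proposition~\ref{SY}, Proposition~\ref{T}, and Lemma~\ref{ku}.

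\emph{$(2)\Rightarrow(1)$.} Let $D\subset C_k(X)$ be open and convex, and let $\vv:D\to\R$ be continuous and convex. Given a nonempty open $U\subset D$, choose $f_0\in U$, a compact $K$ and $\gamma>0$ such that $f_0+V^\gamma_K\subset U$ and $\vv$ is bounded above on $f_0+V^\gamma_K$ (Proposition~\ref{ocl}). By the Claim, $\vv(f)$ depends only on $f|_K$ for $f\in f_0+V^\gamma_K$. By Proposition~\ref{T}, $r_K$ maps $V^\gamma_K$ onto the open ball $\gamma\stackrel\circ B_{C(K)}$. Hence $\psi(g):=\vv(f)$, where $r_Kf=g$, is a well defined convex function on the open ball $W:=r_Kf_0+\gamma\stackrel\circ B_{C(K)}$, and it is bounded above, hence continuous. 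Since $K$ is scattered, Theorem~\ref{Namioka+Phelps} gives a dense $G_\delta$ set of Fréchet points of $\psi$ in $W$. Take such a point $g=r_Kf$. The derivative $\psi'(g)\circ r_K$ is continuous on $C_k(X)$. Moreover, every $h\in V^\gamma_K$ satisfies $\|r_Kh\|<\gamma$, so uniformity of the $o(t)$ on $\gamma\stackrel\circ B_{C(K)}$ transfers to uniformity on $V^\gamma_K$. This is exactly Schwartz–Fréchet differentiability of $\vv$ at $f$.

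To obtain a dense $G_\delta$ rather than mere density, I would argue with the sets $G_n$ of Proposition~\ref{Phelps}, defined in $C_k(X)$ with respect to $V^\gamma_K$. On $f_0+V^\gamma_K$ they equal $r_K^{-1}$ of the corresponding open sets for $\psi$, because $r_K$ is continuous and open onto $W$. Each $r_K^{-1}(G_n)$ is then open and dense in $f_0+V^\gamma_K$, since the preimage of a dense set under an open continuous surjection is dense. The main obstacle is the globalization: $K$ depends on the local patch. I expect it can be handled by a maximal disjoint family of such patches, in the manner of Remark~\ref{notD}, because locally the Fréchet set is a dense $G_\delta$ and a union over a disjoint open family of relatively dense $G_\delta$ pieces is dense. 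If the bookkeeping resists, I would fall back on the intersection over $n$ of the unions of local open dense sets, each of which is open and dense in $D$.

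\emph{$(1)\Rightarrow(2)$.} Suppose a compact $L\subset X$ is not scattered. By Theorem~\ref{sca} there is a surjection $\rho:L\to[0,1]$. Put $\vv(f):=\|r_L f\|_{C(L)}$, or better a function built from the $\limsup$-type or $L^1$-type seminorm composed with $\rho$. The classical choice is $\vv(f)=\int_0^1\max_{\rho^{-1}(s)}|f|\,ds$, or simply the sup norm on $C(L)$ composed with $r_L$. Since $r_L$ is continuous, open and onto $C(L)$ by Proposition~\ref{T}, Fréchet differentiability of $\vv$ at $f$ would yield Fréchet differentiability of the corresponding function on the Banach space $C(L)$ at $r_Lf$. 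It therefore suffices that this function is nowhere Fréchet differentiable on $C(L)$. For the sup norm on a perfect set $P\subset L$, restricted through $C(L)\to C(P)$, this is standard: the maximum is either attained at a non-isolated point, or a small perturbation creates two competing near-maxima.

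\emph{$(2)\Leftrightarrow(3)\Leftrightarrow(4)$.} By Lemma~\ref{ku}, any Banach subspace of $C_k(X)$, whether a copy of $\ell_1$ or a separable one, embeds into some $C(K)$ with $K$ compact. Theorem~\ref{1} then gives $(2)\Rightarrow(4)\Rightarrow(3)$. For $(3)\Rightarrow(2)$, a non-scattered compact $L$ yields $\ell_1\subset C[0,1]\hookrightarrow C(L)$. This copy is lifted into $C_k(X)$ by a bounded linear extension operator $C(L)\to C_k(X)$ on a separable subspace. Such an operator exists because $\ell_1$ is projective: extend each basis vector with the same norm by Proposition~\ref{T}, and map $\sum a_ne_n$ to $\sum a_n\tilde e_n$, which converges uniformly. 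Composing with $r_L$ shows that the resulting map is an embedding.
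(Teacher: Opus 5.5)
Your proposal is correct in substance and covers the same equivalences as the paper, which likewise defers $(5)$ and $(6)$ to \cite{KM}. Your $(2)\Rightarrow(4)\Rightarrow(3)$ via Lemma~\ref{ku} and Theorem~\ref{1} is exactly the paper's argument.

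The real difference is in $(2)\Rightarrow(1)$. You work on patches $f_0+V^\gamma_K$ with a patch-dependent $K$, and are then left with a globalization problem. The paper never meets this problem, because one compact $L$ serves all of $D$. If $u+V^\ee_L\subset D$ and $\vv$ is bounded above there, then for $v\in D$ choose $t>0$ with $z:=v+t(v-u)\in D$; convexity gives
\[
v+\tfrac{t}{1+t}V^\ee_L=\tfrac{1}{1+t}\,z+\tfrac{t}{1+t}\bigl(u+V^\ee_L\bigr)\subset D,
\qquad
\vv\le\tfrac{1}{1+t}\,\vv(z)+\tfrac{t}{1+t}\sup\vv[u+V^\ee_L]\ \text{ on this set}.
\]
Hence, by the Claim, $\vv=\psi\circ r_L$ on all of $D$, and $r_L^{-1}(\Omega)\cap D$ is at once a dense $G_\delta$ of Fr\'echet points. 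This global factorization also underlies Theorem~\ref{first}.

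If you keep the local route, the maximal disjoint family does work. However, what must be verified is the $G_\delta$ property, not density: for pairwise disjoint open $P_i$ and $A_i=\bigcap_nO_{i,n}$ with $O_{i,n}\subset P_i$ open, disjointness gives $\bigcup_iA_i=\bigcap_n\bigcup_iO_{i,n}$.

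Your fallback fails as stated. By the Claim, on a patch the supremum of the symmetric difference quotient over $V^\gamma_K$ equals the supremum over $\gamma M$, with $M$ as in (\ref{M}). So your local sets are $\{f\in P:\gamma_P\,\theta(f)<1/n\}$, where $\theta(f):=\lim_{\delta\downarrow0}\sup_{h\in M}\delta^{-1}\bigl(\vv(f+\delta h)+\vv(f-\delta h)-2\vv(f)\bigr)$. Patches centred at $f$ with arbitrarily small $\gamma$ are allowed, so every $f\in D$ lies in all the unions, and the intersection is all of $D$. The repair is to normalize every set with respect to the single bounded set $M$.

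In $(1)\Rightarrow(2)$, you should prune your list of candidate functions:
\begin{itemize}
\item The sup norm of $C(L)$ fails as soon as $L$ has an isolated point $p$: it is Fr\'echet differentiable at every $g$ with $|g(p)|>\max_{L\setminus\{p\}}|g|$.
\item The $L^1$-type seminorm $\int_0^1\max_{\rho^{-1}(s)}|f|\,ds$ becomes, for $L=[0,1]$ and $\rho$ the identity, the $L^1$-norm. That norm is Fr\'echet differentiable for the sup norm at every $f$ with Lebesgue-null zero set.
\end{itemize}
Only $f\mapsto\max_P|f|$ on a nonempty perfect $P\subset L$ (Lemma~\ref{scattered}) works; up to the absolute value this is the paper's $\max f(P)$. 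The paper runs the \v{S}mulyan-type argument directly in $C_k(X)$, whereas you transfer to $C(P)$ through $r_P$. That transfer is legitimate: uniform smallness of symmetric second differences on a neighbourhood $V$ of $0$ passes to $r_P[V]$, which contains a ball of $C(P)$ by Proposition~\ref{T}.

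In $(3)\Rightarrow(2)$ you lift only $\ell_1$, extending the images of the unit vectors norm-preservingly. This is correct, because $r_L\circ S$ is the given embedding of $\ell_1$ into $C(L)$, which forces $S$ to be an embedding. The paper instead extends the surjection $\rho$ itself to $\check\rho:X\to[0,1]$ and uses $g\mapsto g\circ\check\rho$. That embeds all of $C[0,1]$ into $C_k(X)$ and needs no lifting property of $\ell_1$.
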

\begin{proof}
(1)$\Longrightarrow$(2)
  Assume that $C_k(X)$ is an Asplund space and that some compact subset $L\subset X$ is not scattered. Then, find a nonempty closed perfect set $P\subset L$, by
Lemma~\ref{scattered}.
Consider the function $\vv: C_k(X)\to\R$ defined by
$$
C_k(X)\ni f\longmapsto \max f(P)=:\vv(f).
$$
Clearly, $\vv$ is convex. It is also continuous on $C_k(X)$. Indeed, pick any $f\in C_k(x)$ and any $\ee>0$.
Then
for every $h\in V_P^\ee$ we have
$$
\vv(f+h)-\vv(f)\le \max f(P)+\max h(P)-\max f(P) < \ee
$$ and, similarly,
$$\vv(f)-\vv(f+h)\le \max (f+h)(P)+\max (-h)(P)-\max (f+h)(P) < \ee;
$$
thus $|\vv(f+h)-\vv(f)| <\ee$.

Put
$$
M:=\{h\in C_k(X):\ h(X) \subset [-1,1]\};
$$
this is obviously a bounded set.
As $C_k(X)$ is Asplund and $\vv$ is continuous convex,
$\vv$ is Fr\'echet differentiable at some $f\in C_k(X)$.
Thus, in particular, there is a linear continuous $\xi: C_k(X)\rightarrow\R$ such that
$$
\sup_{h\in M}\Big|\frac{(\vv(f+th)-\vv(f)}t - \xi(h)\Big|\longrightarrow 0\quad {\rm as}\quad t\to0;
$$
and hence (getting rid of the almost redundant $\xi$)
$$
(0 \le )\ \sup_{h\in M}\frac{(\vv(f+th)+\vv(f-th)-2\vv(f)}t\longrightarrow 0\quad {\rm as}\quad t\to0;
$$
Thus, in particular for $\ee:=\frac12$ there is a $\delta>0$ such that
$$
\vv(f+\delta h)+\vv(f-\delta h)-2\vv(f) < \frac\delta2
$$
for every $h\in M$.

Now, pick a $p\in P$ so that $f(p)=\vv(f)$. As $P$ is perfect, there is a $q\in P\setminus \{p\}$ such that $f(q)> \vv(f)-\frac\delta 2$.
We recall that $X$ is a Tychonoff space and $L$ is a compact subset of $X$. Hence, there is a continuous function $h: X\to [0,1]$ such that
$h(p)=0$ and $h(q)=1$. Note that then $\vv(h)=1$ and $h\in M$.
Thus
\begin{eqnarray*}
\delta & =& \delta h(q)-\delta h(p) = (f+\delta h)(q) + (f-\delta h)(p) -f(q)-f(p) \cr
& <& \vv(f+\delta h) + \vv (f-\delta h) - 2\vv(f) +\frac\delta 2 < \frac\delta 2 + \frac\delta 2 =\delta
\end{eqnarray*}
(Note that we used a \v Smulyan like argument, see \cite{dgz}) and \cite[p. 342--344]{y2}; a contradiction. We proved that $L$ must be scattered.
 \smallskip

(2)$\Longrightarrow$(1)  The following proof adapts and extends the geometric techniques established in  \cite{s} and \cite{es}. Let $D\subset C_k(X)$ be any (nonempty) open convex set and consider any convex function $\vv: D\to \R$, everywhere continuous in the $k$-topology of the space $C_k(X)$.
We want to show that the set of points where $\vv$ is Fr\'echet differentiable contains a dense $G_\delta$ subset of $D$.

Pick some $u\in D$. The continuity of $\vv$ at $u$ yields an $\ee>0$ and a compact subset $L\subset X$ such that $u+V^\ee_L \subset D$ and that $\vv$ is bounded above on the set $u+V^{\ee}_L$.
A simple argument following [33, pp. 4-5], then reveals that
for every $v\in D$ there is a $\gamma>0$ such that $v+ V^{\gamma}_L \subset D$ and that
$\vv$ is bounded above on $v+ V^{\gamma}_L$.

\smallskip

Let $C(L)$ be the Banach space of all continuous functions on $L$, with `maximum' norm. Its closed unit ball (around the origin) is denoted by $B_{C(L)}$. Consider the 'restriction' mapping $T: C_k(X)\to C(L)$ defined by
$$
C_k(X)\ni f\longmapsto f|_L=:Tf \in C(L).
$$
Clearly, $T$ is linear. It is also continuous since, for every $\gamma>0$, we have $T[V^\gamma_L]\subset
\gamma\!\!\stackrel \circ B_{C(L)}$. (We have here even equality by Proposition~\ref{T}). Since $T$ is continuous, $T^{-1}(\Omega)$ is open or $G_\delta$
whenever $\Omega\subset C(L)$ is, respectively, open or $G_\delta$. Let us further check that $T[D]$ is an open subset of $C(L)$.
So, fix any $f\in D$. Find a $\gamma>0$ so that $f+ V^\gamma_L\subset D$. Then
$$
Tf +\gamma\!\!\stackrel \circ B_{C(L)} = T[f+ V^\gamma_L] \subset T[D];
$$
the equality here comes from Proposition~\ref{T}, as $X$ is Tychonoff and $L$ is compact.
We verified that $T[D]$ is open.

We further observe that, given any dense subset $\Omega$
in $T[D]$, then $T^{-1}[\Omega]$ is dense in $D$. Indeed,
pick any $v\in D$, any $\beta>0$, and any compact set $L'$ in $X$. We want to show that $$(v+V^\beta_{L'})\cap T^{-1}
[\Omega]\neq\emptyset.$$ Take $\gamma\in(0,\beta)$ so small that $v+V^{\gamma}_L\subset D$.
As $\Omega$ is dense in $T[D]$, we have that
$$
(Tv+\gamma\!\! \stackrel \circ B_{C(L)})\cap \Omega\neq\emptyset;
$$
hence, there is an $$h\in \gamma\!\! \stackrel \circ B_{C(L)}$$ so that $Tv+h\in\Omega$.
By Proposition~\ref{T},
we find a continuous function $\check h:X\to\R$ such that
$\check h[X] \subset (-\gamma,\gamma)$ and $\check h|_L=h$. Then $$v+\check h \in v + V^{\gamma}_{L'} \subset v + V^{\beta}_{L'}$$
and $T(v+\check h)=Tv + h\ (\in\Omega)$. Thus $$v+\check h\in (v+V^\beta_{L'})\cap T^{-1}[\Omega].$$
We proved that $T^{-1}[\Omega]$ is dense in $D$; note that $T^{-1}[\Omega]\cap D$ {\sl is dense in} $D$, too.

Now, we define the function $\psi: T[D]\to\R$ as
$$
\psi(g):=\vv(\check g),\quad g\in T[D],
$$
where $\check g$ is {\tt any} element of $D$ 
such that $\check g|_L=g$.
This function is well-defined because, 
if $f, f'\in D$ and $f|_L=f'|_L=g$, then by the Claim from the proof of Proposition~\ref{SY}, $\vv(f)=\vv(f')$.

The function $\psi$ is convex. Indeed, take any $g_1,g_2\in {C(L)}$ and any $\alpha\in (0,1)$.
Find some $\check{g_1}, \check{g_2}\in D$, with $\check{g_1}|_L=g_1$ and $\check{g_2}|_L=g_2$.
Then 
$$\big(\alpha \check{g_1} +(1-\alpha)\check{g_2}\big)|_L= \alpha g_1+(1-\alpha)g_2,$$ and
hence
\begin{eqnarray*}
\psi\big(\alpha g_1+(1-\alpha)g_2\big) &=& \vv \big(\alpha \check{g_1} +(1-\alpha)\check{g_2}\big) \le \alpha \vv(\check{g_1}) + (1-\alpha) \vv(\check{g_2}\big)\cr
&=&\alpha \psi({g_1}) + (1-\alpha) \psi({g_2}).
\end{eqnarray*}

Let us check that our $\psi$ is continuous on $T[D]$. Fix any $f\in D$. We know that there is an $\ee>0$ so that $f+V^\ee_L$ lies in $D$ and that
$\vv[f+ V^\ee_L]$ is bounded above.
Thus the set $$\psi[Tf+\ee\!\!\stackrel \circ B_{C(L)}] = \vv[f+V^\ee_L]$$ is also bounded above (we needed Proposition~\ref{T}). Given this, \cite[p. 4, 5]{p} (valid for Banach spaces) guarantees the continuity of $\psi$.

Now, since $L$ is a scattered compact set by the assumption,  Theorem \ref{Namioka+Phelps}  says that
the Banach space $C(L)$ is Asplund. Hence our $\psi$ is Fr\'echet differentiable at the points of
a dense $G_\delta$ subset, say $\Omega$, of $T[D]$.
Fix any $f$ in $T^{-1}(\Omega)\cap D$ (which is already known to be a dense $G_\delta$ subset of $D$).
Find an $\ee>0$ so small that $f+V^\ee_L\subset D$ (we already proved that such an $\ee$ exists).
As $Tf\in\Omega$, the function $\psi$ is Fr\'echet differentiable at $Tf$, with derivative, $\xi: C(L)\to\R$, say.
Having this we get
\begin{eqnarray*}
&&\sup_{h\in V_L^\ee}\big|\vv(f+ th)-\vv(f) - \xi(th|_L)\big| \cr
&= & \sup_{z\in \ee \stackrel \circ B_{C(L)}}\big( \psi(T f + tz)-\psi(T f)-\xi(tz)\big) = o(t)\,\,{\rm as}\,\,(-\ee,\ee)\ni t\to 0;
\end{eqnarray*}
Here we again used Proposition~\ref{T}. Therefore, our $\vv$ is Fr\'echet differentiable at $f$, with derivative $$C_k(X)\in h \longmapsto \xi(h|_L)=: \vv'(f)h.$$
In addition, we should verify that this $\vv'(f)$ is linear and continuous. Indeed, we have that
$\xi[V^\ee_L] \subset (-\ee,\ee)$.
\smallskip
(3)$\Longrightarrow$(2)
Assume that $X$ contains a compact subset $K$ which is not scattered. By Theorem~\ref{sca}, there exists a continuous surjection $\rho: K\twoheadrightarrow [0,1]$.
Using Proposition~\ref{T}, $\rho$ can be extended to a continuous surjection, say  $\check \rho: X\twoheadrightarrow [0,1]$.
The natural adjoint mapping to $\check\rho$, say $T: C[0,1]\hookrightarrow C_k(X)$, is then an  embedding (i.e., an isomorphism onto its range). Indeed, a bit of routine calculation  reveals that
$$
V^1_K \cap T[C[0,1]]= T\big[\stackrel \circ B_{C[0,1]}\big]\  \ (\subset V^1_L\ \ {\rm for \ every\  compact}\  \ L\subset X).
$$
(We note that the above equality says that our $T$ is a `bound-covering mapping', see \cite{es}).
Since $C[0,1]$ is universal in the class of all separable Banach spaces, by \cite[Proposition 1.5]{Pelczynski-Bessaga},
the Banach space $\ell_{1}$ (isometrically) embeds into $C[0,1]$. Hence $\ell_1$  embeds into $C_k(X)$, which contradicts (3).
\smallskip

(2)$\Longrightarrow$(3)  Let (2) hold. Assume that $C_{k}(X)$ contains an isomorphic copy of $\ell_{1}$. By Lemma \ref{ku} there exists
a compact (scattered) subset $K\subset X$ such that $\ell_1$ is isomorphic to a subspace of the Banach space $C(K)$. But this contradicts Theorem \ref{1}.
Hence $C_k(X)$ does not contain any isomorphic  copy of $\ell_1$.

\smallskip
(4)$\Longrightarrow$(3) It is true since the dual of $\ell_1$ is isometric to $\ell_\infty$, which is non-separable.

\smallskip

(2)$\Longrightarrow$(4) It follows from Theorem~\ref{1} and Lemma~\ref{ku}.
\end{proof}
\begin{remark}
Theorem~\ref{MAIN} admits several natural refinements and consequences.

\smallskip

\noindent
(a) The assertion~(1) of Theorem~\ref{MAIN} remains valid in the following stronger form:

\medskip

\centerline{\sl For every Asplund Banach space $E$, the product $C_k(X)\times E$ is Asplund.}

\medskip

Indeed, the proof of the implication $(2)\Rightarrow(1)$ carries over almost verbatim. One only has to replace $C_k(X)$ by $C_k(X)\times E$. In particular, the mapping
\[
(f,g)\in C_k(X)\times E \longmapsto (f|_L,g)\in C(L)\times E
\]
retains the crucial bounded-covering property enjoyed by the operator $T$. Since $C(L)\times E$ is a product of Asplund Banach spaces, it is Asplund by \cite{np}. An alternative verification may be found in \cite{es}.

\smallskip

\noindent
(b) Another consequence concerns products of function spaces. If every compact subset of both spaces $X$ and $Y$ is scattered, then the product
\[
C_k(X)\times C_k(Y)
\]
is Asplund. Indeed, it is well known that
\[
C_k(X)\times C_k(Y)\cong C_k(X\oplus Y),
\]
where $X\oplus Y$ denotes the topological sum of $X$ and $Y$. Since every compact subset of $X\oplus Y$ is scattered whenever the same holds in $X$ and $Y$, Theorem~\ref{MAIN} applies directly.
\end{remark}

The preceding observations naturally lead to the following question, which remains open.

\begin{problem}
Assume that $C_k(X)$ is Asplund. Must the product $C_k(X)\times E$ be Asplund for every Asplund locally convex space $E$?
\end{problem}

A closer inspection of the proof of the implication $(2)\Rightarrow(1)$ in Theorem~\ref{MAIN} yields the following result, tightly related to Theorem~\ref{Phelps}; see also \cite[Theorem~3.2 and Example~3.6]{es}.

\begin{theorem}\label{first}
If $X$ is a Tychonoff space and $\vv$ is a continuous convex function on a nonempty  convex open  $D\subset C_k(X)$,
the (possibly empty) set of points of Fr\'echet differentiability of $\vv$ is a $G_{\delta}$-set.
\end{theorem}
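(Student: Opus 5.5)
We reduce to the Banach space $C(L)$ exactly as in the proof of $(2)\Rightarrow(1)$ of Theorem~\ref{MAIN}, and then invoke Proposition~\ref{Phelps} there. No scatteredness is needed, since Proposition~\ref{Phelps} holds for every Banach space.

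First we fix $u\in D$. Continuity of $\vv$ at $u$ gives a compact $L\subset X$ and $\ee>0$ with $u+V^\ee_L\subset D$ and $\vv$ bounded above on $u+V^\ee_L$. As in that proof, for every $v\in D$ there is then $\gamma>0$ with $v+V^\gamma_L\subset D$ and $\vv$ bounded above on $v+V^\gamma_L$. Let $T:C_k(X)\to C(L)$ be the restriction map. We already know that $T$ is continuous and that $T[D]$ is open in $C(L)$. We also know that $\psi(g):=\vv(\check g)$, where $\check g\in D$ is any function with $\check g|_L=g$, is a well-defined continuous convex function on $T[D]$. Well-definedness comes from the Claim in Proposition~\ref{SY}, and continuity from local upper boundedness together with Proposition~\ref{T}.

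The key step is to show that
\[
\{f\in D:\ \vv \text{ is Fr\'echet differentiable at } f\} \;=\; D\cap T^{-1}[G_\psi],
\]
where $G_\psi$ is the set of Fr\'echet differentiability points of $\psi$.

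The inclusion ``$\supseteq$'' is exactly the computation at the end of $(2)\Rightarrow(1)$. If $\psi$ has derivative $\xi$ at $Tf$, then the neighbourhood $V^\ee_L$ witnesses Schwartz--Fr\'echet differentiability of $\vv$ at $f$ with derivative $h\mapsto\xi(h|_L)$. Here Proposition~\ref{T} is used to identify $T[V^\ee_L]$ with $\ee\stackrel\circ B_{C(L)}$.

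For ``$\subseteq$'', suppose $\vv$ is Fr\'echet differentiable at $f$. By Proposition~\ref{SY} this is Yamamuro differentiability, so in particular the symmetric quantity
\[
\sup_{h\in M}\bigl(\vv(f+th)+\vv(f-th)-2\vv(f)\bigr)=o(t),
\]
with $M$ the bounded set from (\ref{M}). Any $z$ in the open unit ball of $C(L)$ extends, by Proposition~\ref{T} (after scaling), to some $\check z\in M$ with $\check z|_L=z$. Then $\psi(Tf\pm tz)=\vv(f\pm t\check z)$. Hence
\[
\sup_{\|z\|<1}\bigl(\psi(Tf+tz)+\psi(Tf-tz)-2\psi(Tf)\bigr)=o(t),
\]
which is the criterion for Fr\'echet differentiability of the convex function $\psi$ at $Tf$ used in the proofs of Propositions~\ref{Phelps} and \ref{g}. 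So $Tf\in G_\psi$.

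Finally, Proposition~\ref{Phelps} applied to $\psi$ on the open set $T[D]\subset C(L)$ shows that $G_\psi$ is a $G_\delta$ in $T[D]$, hence a $G_\delta$ in $C(L)$. Continuity of $T$ then makes $T^{-1}[G_\psi]$ a $G_\delta$ in $C_k(X)$. Intersecting with the open set $D$ gives the conclusion.

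The main obstacle is the inclusion ``$\subseteq$''. It needs the reduction from a derivative of $\vv$ to the symmetric-difference criterion, which uses convexity, and the fact that extensions via Proposition~\ref{T} stay inside the bounded set $M$. This is where the equivalence of Proposition~\ref{SY} is essential.
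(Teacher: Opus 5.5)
Your proof is correct and follows the paper's route. The paper obtains this theorem by ``a closer inspection'' of the proof of $(2)\Rightarrow(1)$ in Theorem~\ref{MAIN}, that is, by passing through the restriction map $T$ to $\psi$ on $T[D]\subset C(L)$ and invoking Proposition~\ref{Phelps}; your inclusion ``$\subseteq$'' (Fr\'echet differentiability of $\vv$ at $f$ forces that of $\psi$ at $Tf$, via extensions lying in $M$) is precisely the detail that inspection leaves implicit. One small remark: that inclusion needs only the trivial implication Schwartz $\Rightarrow$ Yamamuro, since the bounded set $M$ is absorbed by every neighbourhood of $0$, so the nontrivial half of Proposition~\ref{SY} is not actually essential there.
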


\section{$\Delta_1$-spaces $X$ and the Asplund property for spaces  $C_k(X)$}
In this section we briefly outline the proof of Theorem~\ref{mainIII}, originally established in a slightly stronger form in \cite[Theorem~3.2]{KKuL}. The argument shows the significance of the class of $\Delta_1$-spaces introduced and systematically investigated in \cite{KKuL}. These spaces provide a remarkable link between topological covering properties and differentiability theory. In particular, every compact subset of a $\Delta_1$-space is scattered, and therefore Theorem~\ref{MAIN} immediately implies that the corresponding function space $C_k(X)$ is an Asplund locally convex space.

Beyond its role in the proof of Theorem~\ref{mainIII}, the $\Delta_1$-property yields further insight into the structure of spaces of continuous functions. In particular, it leads to an alternative topological characterization of compact spaces $X$ for which the Banach space $C(X)$ is Asplund, thus offering  a new perspective on a classical theorem.

To place these results in context, we recall several compactness-type properties that play an important role in what follows.

\begin{itemize}
\item A space $X$ is called {\it $\omega$-bounded} if the closure of every countable subset of $X$ is compact.

\item A space $X$ is called {\it countably compact} if every countably infinite subset of $X$ has an accumulation point.

\item A space $X$ is called {\it pseudocompact} if every continuous real-valued function on $X$ is bounded.
\end{itemize}

These notions form a natural hierarchy:
\[
\omega\text{-bounded}
\ \Longrightarrow\
\text{countably compact}
\ \Longrightarrow\
\text{pseudocompact}.
\]

Numerous examples separating these classes, as well as related compactness-like properties, may be found in \cite{KKuL}. We only mention two particularly instructive examples.

The ordinal interval $[0,\omega_1)$ endowed with the order topology is a classical example of a locally compact, noncompact, scattered space that is nevertheless $\omega$-bounded. On the other hand, the Tychonoff plank
\[
[0,\omega_1]\times[0,\omega]\setminus\{(\omega_1,\omega)\}
\]
is a scattered pseudocompact space which fails to be $\omega$-bounded. These examples illustrate the substantial gap between the compactness-type properties introduced above.

\begin{theorem} \cite{KKuL} \label{mainIII}
For a Tychonoff $\omega$-bounded space $X$ the following assertions are equivalent:
\begin{enumerate}
\item $X$ is scattered.
\item Every compact subset of $X$ is scattered.
\item $X$ is a $\Delta_1$-space.
\item $C_k(X)$ is an Asplund space.
\end{enumerate}
\end{theorem}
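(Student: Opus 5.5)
The plan is to prove the chain of equivalences (1)$\Leftrightarrow$(2), (2)$\Leftrightarrow$(4), and (1)$\Rightarrow$(3)$\Rightarrow$(2). I recall the definition from \cite{KKuL}: $X$ is a $\Delta_1$-space if every countable disjoint family $\{X_n\}$ of subsets of $X$ admits a point-finite family of open sets $\{U_n\}$ with $X_n\subset U_n$.

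The equivalence (2)$\Leftrightarrow$(4) is exactly (1)$\Leftrightarrow$(2) of Theorem~\ref{MAIN} and needs nothing new. The implication (1)$\Rightarrow$(2) is trivial, since every subspace of a scattered space is scattered.

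For (2)$\Rightarrow$(1), I use $\omega$-boundedness. Suppose some nonempty $A\subset X$ has no isolated points. Using the Tychonoff property, I build by induction a binary tree $(U_s)_{s\in 2^{<\omega}}$ of nonempty open subsets of $X$ meeting $A$, such that $\overline{U_{s0}}$ and $\overline{U_{s1}}$ are disjoint subsets of $U_s$. This is possible because $U_s\cap A$ is infinite, so it contains two distinct points, which can be separated by closed neighbourhoods. Pick points $x_s\in U_s\cap A$ and let $C=\{x_s\}$. By $\omega$-boundedness, $\overline C$ is compact. Put
$$
Q:=\overline C\cap\bigcap_n\bigcup_{|s|=n}\overline{U_s}.
$$
For every branch $\sigma\in 2^\omega$, the set $\overline C\cap\bigcap_n\overline{U_{\sigma|n}}$ is nonempty by compactness, since each $\overline{U_{\sigma|n}}$ contains the points $x_t$ for all $t$ extending $\sigma|n$. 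So the assignment sending a point of $Q$ to its branch is a well-defined continuous surjection $Q\twoheadrightarrow 2^\omega$. The Cantor set is not scattered, so by Theorem~\ref{sca}(1) the compact set $Q$ is not scattered, which contradicts (2).

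For (3)$\Rightarrow$(2), I first check that the $\Delta_1$ property passes to closed (indeed arbitrary) subspaces: intersect the expanding open sets with the subspace. It then suffices to show that a compact $\Delta_1$-space $K$ is scattered. If not, Theorem~\ref{sca} yields a continuous surjection $K\twoheadrightarrow[0,1]$. I would pull back a disjoint family of countable dense subsets of $[0,1]$ that admits no point-finite open expansion. More concretely, I would take disjoint sets $X_n$ whose closures all contain a common nonempty compact set, and use a Baire category argument on that compact set to find a point lying in infinitely many $U_n$, contradicting point-finiteness.

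The main obstacle is (1)$\Rightarrow$(3): showing that a scattered $\omega$-bounded $X$ is $\Delta_1$. I would first prove that compact scattered spaces are $\Delta_1$, by transfinite induction on the Cantor--Bendixson rank, peeling off the isolated points at each level. Next I would reduce to countable families of countable sets, using the characterization from \cite{KKuL} that it suffices to expand countable disjoint families of countable sets; for general sets $X_n$ I would try to work with $\overline{\bigcup X_n}$ directly via scatteredness. Given countable $X_n$, the set $K=\overline{\bigcup X_n}$ is compact and scattered, so the previous step gives a point-finite open expansion in $K$. The hard step is transferring this to $X$ while keeping point-finiteness off $K$. I would try choosing the open sets $W_n$ in $X$ with $W_n\cap K$ equal to the given sets and $W_n\subset\{f_n>0\}$ for functions $f_n$ supplied by Proposition~\ref{T}. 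I would then shrink them to $W_n\cap\bigcap_{m<n}\{f_m<1/2\text{ or }\dots\}$, and use $\omega$-boundedness (closures of countably many selected witness points are compact) to rule out a point lying in infinitely many $W_n$.
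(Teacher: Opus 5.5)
The genuine gap is (1)$\Rightarrow$(3). You flag it yourself as the main obstacle, and you do not prove it. The compact case, ``by induction on the Cantor--Bendixson rank'', is only asserted. The transfer from $K=\overline{\bigcup_n X_n}$ to $X$ is not an argument: the shrinking $W_n\cap\bigcap_{m<n}\{f_m<1/2 \text{ or } \dots\}$ is never defined, and nothing in the sketch stops a point of $X\setminus K$ from lying in infinitely many $W_n$. You also cannot just cut down by open sets shrinking to $K$, because $K$ need not be a $G_\delta$. For example, take $K=C\cup\{\infty\}$ with $C$ countable, inside the one-point compactification of an uncountable discrete set.

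The paper does not pass through compact closures at all. It uses Theorem~\ref{del-1}(a), that every regular scattered space is a $\Delta_1$-space, so $\omega$-boundedness plays no role in this implication. You need that result, cited or proved; your decomposition makes the step harder, not easier.

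Underneath this there is a definitional error: what you ``recall'' is the definition of a $\Delta$-space. For $\Delta_1$ the sets $X_n$ must be countable (equivalently, the decreasing $D_n$ in the paper's definition are countable). That is the definition itself, not a characterization of the one you wrote. With arbitrary $X_n$, the implication (1)$\Rightarrow$(3) is false. The space $[0,\omega_1)$ is Tychonoff, $\omega$-bounded and scattered. Split the countable limit ordinals into stationary sets $S_n$; the pressing-down lemma shows every open $U_n\supset S_n$ contains a tail $(\gamma_n,\omega_1)$. So every ordinal above $\sup_n\gamma_n$ lies in all $U_n$. Your plan to handle general $X_n$ ``directly via scatteredness'' therefore has to be dropped.

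For (3)$\Rightarrow$(2), the reduction to a compact $\Delta_1$-space $K$ is fine, but your sketch of ``compact $\Delta_1$ implies scattered'' does not work as written.
\begin{itemize}
\item Pulling back countable dense sets $Q_n\subset[0,1]$ as full preimages $\rho^{-1}(Q_n)$ gives sets that need not be countable. The $\Delta_1$ hypothesis cannot be applied to them; this only works for the $\Delta$-property you wrote down.
\item Requiring only that the closures of the $X_n$ contain a common compact $P$ is not enough for a Baire argument, because the $U_n$ may miss $P$. If the $X_n$ consist of isolated points accumulating at $P$, you can take $U_n=X_n$.
\end{itemize}
What you need is $X_n\subset P$ with each $X_n$ dense in $P$, and this is easy to arrange. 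By Zorn's lemma, take a minimal closed $K'\subset K$ with $\rho(K')=[0,1]$. For each nonempty relatively open $V\subset K'$, the set $[0,1]\setminus\rho(K'\setminus V)$ is open and nonempty, and each of its points has its whole fibre in $K'$ contained in $V$. Now pick one point of $K'$ over each point of pairwise disjoint countable dense sets $Q_n\subset[0,1]$. This gives disjoint countable sets $X_n$, each dense in $K'$, and the Baire theorem in $K'$ yields a point lying in all $U_n$. The paper simply cites Theorem~\ref{del-1}(b) here.

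The remaining parts are sound. Your (2)$\Leftrightarrow$(4) is Theorem~\ref{MAIN}, exactly as in the paper. Your Cantor-tree proof of (2)$\Rightarrow$(1) is correct and genuinely different from Lemma~\ref{ten}. The paper invokes the external result that a non-scattered pseudocompact space has a closed subset mapping onto $[0,1]$. You instead build directly a compact $Q\subset\overline{C}$ mapping onto $2^\omega$, using only regularity and $\omega$-boundedness, which is more self-contained.
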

Prior to presenting the proof of Theorem~\ref{mainIII}, we introduce a number of concepts and preparatory results that play a central role in the argument.
\begin{definition} \cite{KKuL1}, \cite{KL1}
A topological  space $X$ is called a $\Delta$-space  ($\Delta_1$-space) if for every decreasing sequence $(D_n)_{n\in\omega}$
of (countable, respectively) subsets of $X$ with $\bigcap_{n\in\omega}D_n=\emptyset$, there is a decreasing sequence of open sets $(V_n)_{n\in\omega}$ such that $D_n \subset V_n$
for each $n\in\omega$,
and again with $\bigcap_{n\in\omega}V_n=\emptyset$.
\end{definition}
The notion of a $\Delta$-set of the real line was introduced by M. Reed and E. van Douwen, see \cite{Reed}. Since its formulation, the corresponding $\Delta$- and $\Delta_1$-properties have proved to be surprisingly fruitful in several areas of general topology. In particular, they have found important applications in the theory of function spaces $C_p(X)$  endowed with the topology of pointwise convergence, leading to a deeper understanding of the structure of spaces of the form $C_p(X)$; see \cite{KKuL1, KL1}.

A further reason for the interest in the $\Delta_1$-property is that it naturally extends one of the classical notions of descriptive set-theoretic topology, namely that of a $\lambda$-set. Recall that a subset $X\subset\mathbb{R}$ is called a $\lambda$-set if every countable subset of $X$ is a $G_\delta$-subset of $X$. More generally, a topological space $X$ is called a $\lambda$-space if every countable subset of $X$ is a $G_\delta$-set. The study of $\lambda$-sets goes back to a celebrated result of K. Kuratowski from 1933, who proved within ZFC the existence of uncountable $\lambda$-sets; see \cite[Chapter~3, \S40.III]{k}.

The class of $\Delta_1$-spaces is strictly broader than the class of $\lambda$-spaces, since every $\lambda$-space possesses the $\Delta_1$-property. Thus, $\Delta_1$-spaces establish a natural framework in which many classical phenomena related to $\lambda$-sets can be studied in a more general setting. At the same time, the two notions coincide within the realm of metrizable spaces, i.e., a metrizable space has the $\Delta_1$-property if and only if it is a $\lambda$-space, by \cite[Theorem~2.19]{KKuL1}.

First we present a proof of the equivalence (1) and (2) in Theorem \ref{mainIII}.
\begin{lemma}\label{ten}
Let $X$ be a Tychonoff $\omega$-bounded space. Then every compact subset of $X$ is scattered
if and only if  $X$ is scattered.
\end{lemma}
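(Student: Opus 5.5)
One direction is immediate and needs no hypothesis on $X$. If $X$ is scattered, then every subspace of $X$ is scattered, since scatteredness is hereditary: a nonempty subset $S$ of a compact set $K\subset X$ is also a nonempty subset of $X$, so it has a point isolated in $S$. In particular every compact subset of $X$ is scattered.

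For the converse, assume every compact subset of $X$ is scattered, and suppose toward a contradiction that $X$ is not scattered. Then $X$ contains a nonempty set $S$ with no isolated points, and I will replace $S$ by a countable dense-in-itself set. The plan is to build, by induction, a countable set $A=\{a_1,a_2,\ldots\}\subset S$ with no isolated points:
\begin{itemize}
\item Start with any $a_1\in S$.
\item At each stage the current set is finite. For every point already chosen, enumerate tasks that require choosing new points of $S$ accumulating at it.
\item Since $X$ need not be first countable, sequences cannot be used directly. Instead, at each stage and for each earlier point $a_i$, choose a new point $a\in S\setminus\{a_1,\ldots,a_n\}$ that is distinct from all previously chosen points. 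Such points exist because $a_i$ is not isolated in $S$ and $X$ is $T_1$.
\end{itemize}
This alone need not make $a_i$ non-isolated in $A$. The cleaner route is the standard closure argument: let $A$ be countable, and let $F=\overline{A}$. Since $X$ is $\omega$-bounded, $F$ is compact, and by hypothesis $F$ is scattered. It then suffices to find a countable $A$ that has no isolated point in itself. Indeed, any point $x\in A$ isolated in $F$ would be isolated in $A$, so the set of isolated points of $F$, which is dense in $F$ because $F$ is scattered, would have to avoid $A$. But $A$ is dense in $F$, and a nonempty open set $\{x\}$ must meet $A$, so $x\in A$, a contradiction.

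The key step, and the main obstacle, is therefore obtaining a countable dense-in-itself subset of $X$. I expect to handle it as follows. Take the closure $P=\overline{S}$, which is closed and has no isolated points. Pick $a_1\in P$ and inductively close off a countable set $A\subset P$: for each $a\in A$ and each element of a suitable countable family, add a point of $P\setminus\{a\}$.

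Without first countability this closing-off cannot be done with countably many neighbourhoods, so I would switch to a tree (Cantor scheme) construction using regularity of the Tychonoff space $X$:
\begin{itemize}
\item Build nonempty open sets $U_s$, for $s\in 2^{<\omega}$, each meeting $P$.
\item Arrange $\overline{U_{s0}}\cap\overline{U_{s1}}=\emptyset$ and $\overline{U_{si}}\subset U_s$. This is possible because $P\cap U_s$ is infinite, so it contains two distinct points, which can be separated by open sets.
\item Choose $x_s\in U_s\cap P$.
\end{itemize}
Then $A=\{x_s\}$ is countable, and $K=\overline{A}$ is compact by $\omega$-boundedness.

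For each branch $b\in 2^\omega$, the set $K_b=K\cap\bigcap_n\overline{U_{b|n}}$ is nonempty by compactness, since every $\overline{U_{b|n}}$ contains the points $x_t$ for $t$ extending $b|n$. The sets $K_b$ are pairwise disjoint. Hence $K$ has $2^{\mathfrak c}$-many, or at least $\mathfrak c$-many, points organized in a Cantor scheme. I would then define $g\colon K\to 2^\omega$ by sending $K_b$ to $b$, and send the remaining points via the first $n$ at which they leave the sets $\overline{U_{b|n}}$. Making $g$ a continuous surjection is awkward.

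It is simpler to use Urysohn functions. At each node $s$, choose a continuous $\phi_s\colon X\to[0,1]$ that equals $0$ on $\overline{U_{s0}}$ and $1$ on $\overline{U_{s1}}$, which Tychonoffness allows. Set $\Phi=(\phi_s)_s\colon K\to[0,1]^{2^{<\omega}}$. The image $\Phi(K)$ is compact and metrizable, and by Theorem~\ref{sca} it must be scattered, hence countable by Theorem~\ref{ordinal}. On the other hand, the points coming from distinct branches $b\ne b'$ have distinct images under $\Phi$: at the splitting node $s$, the function $\phi_s$ takes the value $0$ on one of the corresponding sets $K_b$, $K_{b'}$ and $1$ on the other. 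So $\Phi(K)$ has at least $\mathfrak c$ points, a contradiction.

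This Cantor-scheme step is the heart of the proof, and I would carry it out carefully.
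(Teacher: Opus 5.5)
Your argument is correct in substance, apart from one fixable step noted below, but it takes a different route from the paper.

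\textbf{Comparison with the paper.} The paper's proof is short because it outsources the hard part. It invokes \cite[Proposition~5.5]{Leiderman-Tka}: a non-scattered pseudocompact space has a closed subset $F$ with a continuous surjection $h\colon F\to[0,1]$. It then uses $\omega$-boundedness only once. It picks a countable $C\subset F$ with $h(C)$ dense in $[0,1]$, so the compact set $\overline{C}$ still maps onto $[0,1]$, contradicting Theorem~\ref{sca}.

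Your Cantor-scheme construction proves the needed special case of that external result from scratch:
\begin{itemize}
\item regularity of $X$ builds the scheme;
\item $\omega$-boundedness makes the closure $K$ of the countably many points $x_s$ compact;
\item the nested closures give $2^\omega$ pairwise disjoint nonempty sets $K_b$;
\item $\Phi(K)$ is then an uncountable scattered metrizable compactum, contradicting Theorem~\ref{sca}(1) together with Theorem~\ref{ordinal}.
\end{itemize}
So your version is self-contained within the paper's own toolkit and uses $\omega$-boundedness essentially. The paper's version is shorter and rests on a result valid for all pseudocompact spaces.

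\textbf{The step that needs repair.} A Tychonoff space does not in general admit a continuous function that is $0$ on one closed set and $1$ on a disjoint closed set; that is normality. So ``which Tychonoffness allows'' does not justify the existence of $\phi_s$ on $X$. Two easy fixes are available:
\begin{itemize}
\item Since you only use $\Phi$ on $K$, apply Urysohn's lemma in the compact Hausdorff, hence normal, space $K$ to the disjoint closed sets $K\cap\overline{U_{s0}}$ and $K\cap\overline{U_{s1}}$.
\item Alternatively, take a continuous $\phi\colon X\to[0,1]$ separating your two chosen points. Choose $U_{s0}\subset\{\phi<\tfrac13\}$ and $U_{s1}\subset\{\phi>\tfrac23\}$, and let $\phi_s$ be the truncation of $3\phi-1$ to $[0,1]$.
\end{itemize}

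\textbf{Cleaning up the write-up.} Drop the abandoned attempts: the inductive choice of the $a_i$, the reduction to a countable dense-in-itself set (which your final construction never produces), and the map $g$. Also remove the unjustified claim of $2^{\mathfrak c}$ points. The construction gives only $|\Phi(K)|\ge\mathfrak c$, and that is all you need.
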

\begin{proof}
Assume that every compact subset of $X$ is scattered and suppose, contrary to the claim, that $X$ is not scattered. Since $X$ is pseudocompact, \cite[Proposition~5.5]{Leiderman-Tka} yields a closed subset $F\subset X$ and a continuous surjection $h:F\to[0,1]$.

Let $Q\subset[0,1]$ be countable and dense, and choose a countable set $C\subset F$ with $h(C)=Q$. Since $F$ is $\omega$-bounded, the closure $A:=\overline{C}$ is compact. Furthermore,
\[
h(A)\supset\overline{h(C)}=\overline{Q}=[0,1],
\]
so $h(A)=[0,1]$.

The compact space $A$ is scattered by assumption, yet it can be mapped continuously onto $[0,1]$, contradicting Theorem~\ref{sca}. Hence $X$ is scattered. The converse implication is obvious.
\end{proof}

For the reader's convenience, below we collect several most important results on $\Delta_1$-spaces.
Despite the close relationship between a $\Delta_1$-space $X$ and the question of whether $C_k(X)$ is an Asplund space,
 we omit the proofs of the statements presented here. These results are of a purely topological nature and lie somewhat outside the main scope of this work.
\begin{theorem}\label{del-1}\hfill
\begin{itemize}
\item [(a)] \cite[Theorem 2.15]{KKuL1}
Every regular scattered space is a $\Delta_1$-space.
\item [(b)]  \cite[Corollary 2.16]{KKuL1}
A compact space $X$ is a $\Delta_1$-space if and only if $X$ is scattered.
\item [(c)]  \cite[Theorem 2.9]{KKuL1}
A pseudocompact space $X$ is a $\Delta_1$-space if and only if every countable subset of $X$ is scattered.
\item [(d)]  \cite[Theorem 3.9]{KKuL1}
Let $X$ be a countable union of closed subsets $X_n$. If each $X_n$ is a $\Delta_1$-space
then $X$ also is a $\Delta_1$-space.
\end{itemize}
\end{theorem}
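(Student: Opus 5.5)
The statement to address is Theorem~\ref{del-1}, a collection of four topological facts about $\Delta_1$-spaces. I will take them in the order (a), (b), (c), (d). A preliminary reformulation will be used throughout: since the sets $D_n$ in the definition are countable, the $\Delta_1$-property says that for every countable set $D=\{d_0,d_1,\dots\}$ and every map $n:D\to\omega$ (sending $d$ to the last index with $d\in D_n$), one can find open sets $U_d\ni d$ such that each point $x\in X$ lies in only those $U_d$ with $n(d)$ bounded. Equivalently, one can take $V_k=\bigcup_{n(d)\ge k}U_d$.

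For (a), let $X$ be regular and scattered, and fix a decreasing sequence $(D_n)$ of countable sets with empty intersection. I would argue by transfinite induction on the Cantor--Bendixson rank of points. Each $x\in X$ is isolated in $X^{(\alpha_x)}$, so by regularity there is an open $W_x\ni x$ whose closure meets $X^{(\alpha_x)}$ only in $\{x\}$. Using the countability of $\bigcup D_n$, I would enumerate it and choose neighbourhoods $U_d\subset W_d$, shrinking them so that a fixed point $x$ belongs to only finitely many $U_d$ with large $n(d)$. The key mechanism is that the points of $\bigcup D_n$ that accumulate at $x$ have rank below $\alpha_x$. I expect this shrinking step, done simultaneously for all $x$ and not only for points of $D$, to be the main obstacle. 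I would try to control it by an inductive choice along the enumeration $d_0,d_1,\dots$, at stage $k$ removing from $U_{d_k}$ the closures of higher-level neighbourhoods already chosen.

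For (b), the implication from scattered to $\Delta_1$ follows from (a), since compact spaces are regular. For the converse, suppose $X$ is compact and not scattered. By Theorem~\ref{sca} there is a continuous surjection $\rho:X\to[0,1]$. I would pick a countable set in $X$ mapping onto the rationals, and exploit that $\mathbb{Q}$ is not a $\lambda$-like set inside the compact set $[0,1]$. Concretely, take $D_n=\rho^{-1}$-lifts of $\mathbb{Q}\cap(0,1/n)$, minus finitely many points, arranged so that $\bigcap D_n=\emptyset$. Any open $V_n\supset D_n$ then has closure meeting $\rho^{-1}(0)$, and compactness gives $\bigcap\overline{V_n}\ne\emptyset$. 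Since that only yields a nonempty intersection of closures, I would refine the choice using a perfect set together with a Baire category argument in $[0,1]$ to force $\bigcap V_n\neq\emptyset$.

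Part (c) then follows the same pattern. Assume $X$ is pseudocompact. If some countable subset $C\subseteq X$ is not scattered, then $\overline C$ carries a dense-in-itself countable set. I would run the argument from (b) using pseudocompactness in place of compactness: locally finite families of open sets are finite, which forces $\bigcap V_n\ne\emptyset$. Conversely, if every countable set is scattered, I would apply (a) to $\overline{\bigcup D_n}$, or rather to the countable set $\bigcup D_n$ itself, and then extend the witnessing open sets to $X$ by regularity.

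For (d), given $(D_n)$, I would split $D_n=\bigcup_m (D_n\cap X_m)$ and apply the hypothesis in each $X_m$ to obtain relatively open sets $V_n^m$. The difficulty is that $V_n^m$ is only open in $X_m$ and must be made open in $X$. I would handle this by a diagonal choice: set $V_n=\bigcup_{m\le n} G_n^m$, where the $G_n^m$ are open in $X$ with $G^m_n\cap X_m\subseteq V^m_n$, chosen so that they shrink away from $X_1,\dots,X_{m-1}$.
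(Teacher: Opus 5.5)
The paper gives no proof of this theorem: all four items are quoted from K{\c{a}}kol--Kurka--Leiderman and their proofs are explicitly omitted. Your sketch therefore has to stand on its own, and at several points it does not.

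\textbf{The non-scattered half of (b) and of (c).} This is the most serious gap. Your sets $D_n\subset\rho^{-1}((0,1/n))$ can never witness a failure of the $\Delta_1$-property, since $V_n:=\rho^{-1}((0,1/n))$ is an open expansion with $\bigcap_n V_n=\emptyset$. Tails of an arbitrary countable lift of $\mathbb{Q}$ need not work either: in $[0,1]\times[0,\omega]$ with lifts $(q_k,k)$, the open sets $[0,1]\times\{k\in\omega: k\ge n\}$ do the job. A Baire argument in $[0,1]$ cannot be pulled back, because $\rho$ is not open.

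The missing idea is a countable \emph{crowded} set $C\subset X$. Take a closed $M\subset X$ minimal with $\rho(M)=[0,1]$ (Zorn plus compactness), and pick $c_q\in M\cap\rho^{-1}(q)$ for rational $q$. For a nonempty relatively open $U\subset M$, the set $[0,1]\setminus\rho(M\setminus U)$ is nonempty and open, and all fibres of $\rho|_M$ over it lie in $U$; hence $U\cap C$ is infinite. Let $D_n$ be the tails of an enumeration of $C$. Then each $V_n\cap\overline C$ is dense and open in the compact space $\overline C$, and Baire's theorem there gives $\bigcap_n V_n\neq\emptyset$.

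In (c) the same set-up is needed, but `locally finite families of open sets are finite' does not by itself force $\bigcap_n V_n\neq\emptyset$ (think of $V_n=(0,1/n)$ in $[0,1]$). You must build nonempty open $W_{k+1}$ with $\overline{W_{k+1}}\subset W_k\cap V_{k+1}$ and $W_{k+1}\cap C\neq\emptyset$. This is possible because $W_k\cap C$ is infinite and so meets the tail $D_{k+1}$. Then apply pseudocompactness to $(W_k)$.

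\textbf{Part (a) and the converse in (c).} Extending witnesses from the countable subspace $\bigcup_n D_n$ `by regularity' is not a valid step. Every countable subspace of $[0,1]$ is metrizable, hence a $\lambda$-space and so $\Delta_1$, yet $[0,1]$ is not $\Delta_1$.

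What works is your rank idea from (a), made precise and run directly in $X$, with ranks taken in the countable scattered set $D=D_0$. Choose open $W_d\ni d$ with $\overline{W_d}\cap D^{(\mathrm{rk}(d))}=\{d\}$, enumerate $D=\{d_0,d_1,\dots\}$, and put $U_{d_k}:=W_{d_k}\setminus\bigcup\{\overline{U_{d_j}}:\ j<k,\ d_k\notin\overline{U_{d_j}}\}$. If $x\in U_{d_j}\cap U_{d_k}$ with $j<k$, then $d_k\in\overline{U_{d_j}}\subset\overline{W_{d_j}}$, so $\mathrm{rk}(d_k)<\mathrm{rk}(d_j)$. Hence the $d$ with $x\in U_d$, listed in enumeration order, have strictly decreasing ranks and are finitely many. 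Thus $\{U_d\}$ is point-finite at every $x\in X$ at once, and $V_n:=\bigcup_{d\in D_n}U_d$ works. Note that only closures \emph{not} containing $d_k$ can be removed, and it is well-foundedness of the ranks, not the levels $n(d)$, that finishes the argument. This proves (a) and the converse of (c) for every regular space.

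\textbf{Part (d).} Your construction fails as written. The union $\bigcup_{m\le n}G^m_n$ misses $D_n\cap X_m$ for $m>n$. Moreover, nothing controls $G^m_n$ at points of $X_{m'}\setminus X_m$ with $m'>m$: shrinking away from earlier pieces protects only earlier pieces.

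The fix is to impose finitely many constraints per point. Take witnesses $U^m_d=G^m_d\cap X_m$ in $X_m$ for $d\in D\cap X_m$, with $G^m_d$ open in $X$. Let $U_d$ be the intersection of the sets $G^m_d$ with $m\le n(d)$, $d\in X_m$, together with the sets $X\setminus X_m$ with $m\le n(d)$, $d\notin X_m$. If $x\in X_{m_0}\cap U_d$ and $n(d)\ge m_0$, then $d\in X_{m_0}$ and $x\in U^{m_0}_d$. So the levels $n(d)$ with $x\in U_d$ are bounded.
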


We are ready to prove our main Theorem  \ref{mainIII} of this section.

\begin{proof}[Proof of Theorem \ref{mainIII}]

(1)$\Longleftrightarrow$(2) This is Lemma \ref{ten}.

(1)$\Longrightarrow$(3)  Trivially follows from Theorem \ref{del-1}(a).

(3)$\Longrightarrow$(2) Assume  that $X$ is a $\Delta_1$-space. Then every compact subset of $X$ is a $\Delta_1$-space.
We apply Theorem \ref{del-1}(b).
\end{proof}
\begin{remark}
The pseudocompactness assumption in Theorem~\ref{mainIII} cannot be weakened to countable compactness. Indeed, there exists a countably compact, non-scattered dense subspace
\[
X\subset \beta\omega\setminus\omega
\]
such that every countable subset of $X$ is scattered. Consequently, $X$ is a $\Delta_1$-space. Moreover, every compact subset of $X$ is finite; see \cite[Example~2.18]{KKuL1}. Thus, even very strong scatteredness properties of compact subsets do not suffice to guarantee the conclusion of Theorem~\ref{mainIII} under the weaker assumption of countable compactness.
\end{remark}

\begin{remark}
The condition that every compact subset of $X$ be scattered is, by itself, substantially weaker than the $\Delta_1$-property.

A classical illustration is given by a Bernstein set $\B\subset\mathbb R$, that is, a set for which both $\B$ and $\mathbb R\setminus\B$ meet every uncountable closed subset of $\mathbb R$. It follows that every compact subset of $\B$ is countable and hence scattered. Nevertheless, $\B$ fails to be a $\Delta_1$-space; see \cite[Example~2.20]{KKuL1}.

The situation remains equally complex within the class of pseudocompact spaces. In fact, there exists a pseudocompact space $X$ such that every compact subset of $X$ is scattered, whereas $X$ is not a $\Delta_1$-space; see \cite[Example~3.2]{KKuL1}.

These examples demonstrate that the $\Delta_1$-property imposes significantly stronger restrictions on the topology of a space than the mere scatteredness of its compact subspaces.
\end{remark}
We conclude this section with a strengthened form of the Namioka--Phelps theorem, where condition~(2) is shown to be equivalent to the classical formulation.
\begin{theorem}
For a compact space $X$ the following assertions are equivalent:
\begin{enumerate}
\item $X$ is scattered.
\item $X$ is a $\Delta_1$-space.
\item The Banach space $C(X)$ is Asplund.
\end{enumerate}
\end{theorem}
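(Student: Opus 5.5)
The plan is to chain together results already established in the paper, with no new analysis. The equivalence (1)$\Leftrightarrow$(3) is exactly the Namioka--Phelps Theorem~\ref{Namioka+Phelps}, so the only new content is bringing condition (2) into the cycle. For that we invoke Theorem~\ref{del-1}(b), which says that a compact space is a $\Delta_1$-space if and only if it is scattered. This gives (1)$\Leftrightarrow$(2) directly.

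Alternatively, we can argue through the pieces of Theorem~\ref{del-1}, so as to lean as little as possible on the cited compact case.

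For (1)$\Rightarrow$(2): a compact Hausdorff space is regular. Hence, if $X$ is scattered, Theorem~\ref{del-1}(a) shows that $X$ is a $\Delta_1$-space.

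For (2)$\Rightarrow$(1): a compact space is pseudocompact. So Theorem~\ref{del-1}(c) shows that every countable subset of $X$ is scattered. To conclude that $X$ itself is scattered, suppose not.
\begin{itemize}
\item By Theorem~\ref{sca}(2) there is a continuous surjection $h\colon X\to[0,1]$.
\item Choose a countable set $C\subset X$ with $h(C)$ equal to the rationals in $[0,1]$.
\item The set $\overline{C}$ is compact, so $h(\overline{C})=[0,1]$ and $\overline{C}$ is not scattered.
\end{itemize}
This by itself does not contradict scatteredness of the countable set $C$, so this route needs more care. I would therefore simply rely on Theorem~\ref{del-1}(b), which covers exactly this point.

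Finally, (1)$\Leftrightarrow$(3) can also be read off Theorem~\ref{MAIN} with $X$ compact. In that case (2) of Theorem~\ref{MAIN} reduces to ``$X$ is scattered'', because closed subsets of scattered spaces are scattered. Moreover, $C_k(X)$ coincides with the Banach space $C(X)$, since $X$ itself is among the compact sets defining the topology.

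The only potential obstacle is the implication ``$\Delta_1$ implies scattered'' for compact $X$. This is handled by the cited Theorem~\ref{del-1}(b), so the proof is essentially a bookkeeping assembly of earlier results.
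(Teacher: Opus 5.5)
Your proof is correct and matches how the paper intends this corollary to be read. There, (1)$\Leftrightarrow$(2) is Theorem~\ref{del-1}(b), or equivalently Theorem~\ref{mainIII}, since compact spaces are $\omega$-bounded and $C_k(X)=C(X)$; and (1)$\Leftrightarrow$(3) is Theorem~\ref{Namioka+Phelps}. You were also right to set aside the route through Theorem~\ref{del-1}(c), because it would additionally require that a compact space whose countable subsets are all scattered is itself scattered, which the paper does not supply.
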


\section{Illustrative examples and the role of the strongest locally convex topology}
It is a classical and highly nontrivial fact that every closed linear subspace of an Asplund Banach space is again Asplund; see, for instance, \cite{p} and \cite[Proposition~2.33]{np}. Surprisingly, this permanence property breaks down in the broader setting of locally convex spaces. Counterexamples were obtained in \cite{Kakol-Leiderman}, and the construction relies on the remarkable space~$\varphi$.

Recall that $\varphi$ denotes a countably dimensional vector space endowed with the finest locally convex topology $\xi_\infty$, i.e., the topology whose neighbourhood base at the origin consists of all absolutely convex absorbing subsets. Equivalently, $\xi_\infty$ is the strongest locally convex topology on the underlying vector space, and therefore every linear functional on $\varphi$ is automatically continuous.

A distinctive feature of $\varphi$ is that every bounded subset is finite-dimensional. Indeed, if a bounded set were not contained in a finite-dimensional subspace, the Hahn--Banach theorem would yield a discontinuous linear functional, contradicting the definition of $\xi_\infty$. On the other hand, $\varphi$ admits a natural representation as the strict countable inductive limit of finite-dimensional Banach spaces $\mathbb{R}^n$. More precisely, if $\xi$ denotes the locally convex inductive limit topology generated by the increasing sequence of finite-dimensional subspaces, then $\xi=\xi_\infty$; see \cite[Chapter~2.4]{KKPS} and \cite[0.3.1]{Bonet}. Moreover, this topology is complete; see \cite[Proposition~8.4.16(iv)]{Bonet} or \cite{Zelazko}. The latter fact will be  used in the final part of the proof of Proposition \ref{UU}.

An important observation due to B. Sharp \cite{s} is that $\varphi$ fails to be a GDS space. To see this, consider the $\ell_1$-norm $\|\cdot\|_1$, which is continuous with respect to $\xi_\infty$ and therefore defines a continuous convex function on $\varphi$. Nevertheless, $\|\cdot\|_1$ is nowhere G\^ateaux differentiable; see \cite[Example~1.4(b)]{p}. Indeed, for every $x\in\varphi$ there exists $n\in\mathbb N$ such that the $n$-th coordinate of $x$ vanishes. Let
\[
e_n=(0,\ldots,0,1,0,\ldots)
\]
be the $n$-th canonical unit vector. Then
\[
\lim_{t\downarrow0}
\frac{\|x+te_n\|_1-\|x\|_1}{t}=1,
\qquad
\lim_{t\uparrow0}
\frac{\|x+te_n\|_1-\|x\|_1}{t}=-1,
\]
showing that the directional derivative does not exist at $x$. Since $x$ was arbitrary, the norm $\|\cdot\|_1$ is nowhere G\^ateaux differentiable on $\varphi$.

\begin{remark}
The representation of $\varphi$ as the strict inductive limit of the spaces $\mathbb R^n$ implies that $\varphi$ may be identified with the free locally convex space $L(\mathbb N)$. More generally, it was proved in \cite{Kakol-Leiderman} that the free locally convex space $L(X)$ is not GDS whenever $X$ is any infinite Tychonoff space.
\end{remark}

To establish Proposition~\ref{UU}, we shall make use of two outstanding results due to S. Saxon \cite{Saxon}. Since some steps in the original arguments are only sketched, we include complete proofs for the reader's convenience. We begin by reproving the following technical lemma from \cite[Lemma]{Saxon}.

\begin{proposition}
Let $E$ be a locally convex space. Assume that  $(A_n)$ is an increasing sequence of closed absolutely convex subsets of $E$ covering $E$
and that there exists a sequence $(x_n)$ such that
\[
x_n\in A_{n+1}\setminus \operatorname{sp}(A_n)
\qquad (n\in\mathbb N).
\]

Then the dual space $E'$ contains functionals $f_n\in A_n^\circ$ $(n\in\mathbb N)$ for which
\[
q(x):=\sup_{n\in\mathbb N}|f_n(x)|
\]
satisfies
\[
\sum_{i=1}^{\infty}|a_i|
\le
q\!\left(\sum_{i=1}^{\infty}a_i x_i\right)
\qquad\text{for all }(a_i)\in c_{00}.
\]
\end{proposition}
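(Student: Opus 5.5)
The plan is to build the functionals $f_n$ by Hahn--Banach separation, exploiting the fact that the vectors $x_n,x_{n+1},\dots$ are ``transversal'' to $\operatorname{sp}(A_n)$. I have not resolved how the coordinates $i<n$ are handled; this is flagged below.

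\emph{Step 1 (transversality).} For $n\le j$ we have $x_n,\dots,x_{j-1}\in A_j$ and $\operatorname{sp}(A_n)\subset\operatorname{sp}(A_j)$, while $x_j\notin\operatorname{sp}(A_j)$. Hence for every $m\ge n$,
\[
F_{n,m}:=\operatorname{sp}\{x_n,\dots,x_m\}
\]
satisfies $F_{n,m}\cap\operatorname{sp}(A_n)=\{0\}$, and $x_n,\dots,x_m$ are linearly independent.

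\emph{Step 2 (prescribing values).} I want the following: given $n\le m$ and signs $\varepsilon_n,\dots,\varepsilon_m\in\{-1,1\}$, there is $f\in A_n^\circ$ with $f(x_i)=\varepsilon_i$ for $n\le i\le m$.

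Algebraically this is easy. Put $g=0$ on $\operatorname{sp}(A_n)$ and $g(x_i)=\varepsilon_i$ on $F_{n,m}$. Then $g$ is dominated by the gauge of $A_n$, which is $+\infty$ off $\operatorname{sp}(A_n)$.

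The real issue is continuity. I would instead separate with a continuous seminorm: show that the closed absolutely convex set $A_n+\frac1N\operatorname{acx}\{x_n,\dots,x_m\}$ is closed (a closed set plus a compact set), and that it meets $F_{n,m}$ only in a bounded set. The second point is a compactness argument on the unit sphere of the finite-dimensional space $F_{n,m}$, together with the closedness of $A_n$ and the triviality of $F_{n,m}\cap\operatorname{sp}(A_n)$. Then apply the Hahn--Banach separation theorem in $E$ to obtain a continuous $f$ with $|f|\le 1$ on $A_n$ and the prescribed values up to a harmless normalization.

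I expect Step 2 to be the main obstacle: getting a \emph{continuous} functional that equals exactly $\varepsilon_i$ (not merely something large) on $x_n,\dots,x_m$ while staying in $A_n^\circ$.

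\emph{Step 3 (enumeration and the estimate).} Enumerate all pairs consisting of a finite index interval and a sign pattern on it. Assign each pair to an index $n$ equal to the left endpoint of its interval, and use Step 2 to obtain the corresponding $f_n$. Since each $n$ receives infinitely many patterns, the indexing will have to be rearranged: either allow repetitions by letting the sets $A_n$ repeat, or absorb several patterns into one index by a diagonal choice.

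For $(a_i)\in c_{00}$ with support in $[1,m]$, one then wants a single $f_n$ with $f_n(x_i)=\operatorname{sign}a_i$ for all $i\le m$, giving $\sum_i|a_i|=f_n\bigl(\sum_i a_ix_i\bigr)\le q\bigl(\sum_i a_ix_i\bigr)$.

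The coordinates $i<n$ are where this plan is least secure. For them I would use that $x_i\in A_n$, so $|f_n(x_i)|\le1$ automatically. I would try to extend the prescription in Step 2 to these coordinates as well, keeping $|f|\le1$ on $A_n$. This is not automatic: for instance $x_1-x_2$ may lie in $A_n$, which rules out prescribing $(1,-1)$. If that fails, I would fall back on a gliding-hump induction on $m$ that controls the tail coordinates and the head coordinates by different functionals.
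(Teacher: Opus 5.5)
Your Step~1 is correct. Step~2 is also true, so it is not the real obstacle: the restriction of $A_n^\circ$ to $F_{n,m}$ is an absolutely convex subset of $F_{n,m}^*$ whose polar in $F_{n,m}$ is $A_n^{\circ\circ}\cap F_{n,m}=A_n\cap F_{n,m}=\{0\}$, so it is dense and hence all of $F_{n,m}^*$.

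\textbf{The gap is Step~3.} You want to norm each finitely supported vector exactly by one functional with $f_n(x_i)=\operatorname{sign}a_i$ for all $i$. Under the constraint $f_n\in A_n^\circ$ this is impossible in general, not just awkward to index. Take $\varphi$ with Hamel basis $(e_n)$, put $x_n=e_n$, $A_1=\{0\}$, and $A_n=\{\sum_{i<n}t_ie_i:\ \max_i|t_i|\le n\}$ for $n\ge2$. All hypotheses hold. But $f\in A_n^\circ$ means $n\sum_{i<n}|f(e_i)|\le 1$, so $|f_n(x_1)|\le 1/n<1$ for every $n\ge2$. Hence every sign pattern involving $x_1$ must be carried by $f_1$ alone. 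Yet $f_1$ cannot carry both $(1,1)$ and $(1,-1)$ on $\{x_1,x_2\}$, even up to a global sign.

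Your rearrangements do not rescue this. One functional carries only one sign pattern up to sign, so patterns cannot be absorbed into a single index. Letting the $A_n$ repeat would put a functional from $A_j^\circ$ into a slot $n>j$, violating $f_n\in A_n^\circ$. That condition is exactly what makes $(f_n)$ pointwise bounded in Theorem~\ref{ZZ}.

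\textbf{What is missing is the induction your fallback only names.} The paper constructs the $f_k$ inductively and carries the stronger estimate
\[
\frac{k+1}{k}\sum_{i=1}^{k}|a_i|\le q_k\Big(\sum_{i=1}^{k}a_ix_i\Big),\qquad q_k:=\max_{j\le k}|f_j| .
\]
The factor $\frac{k+1}{k}>\frac{k+2}{k+1}$ is slack to be spent at the next step. The new functional is prescribed at a single vector only: $f_{k+1}\in A_{k+1}^\circ$ with $f_{k+1}(x_{k+1})=Q$, where $Q$ is a large number depending on $k$ and $q_k(x_{k+1})$. This is plain Hahn--Banach, since $x_{k+1}\notin\operatorname{sp}(A_{k+1})\supset QA_{k+1}$. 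On $x_1,\dots,x_k$ nothing is used except $|f_{k+1}(x_i)|\le 1$, which is automatic because $x_i\in A_{k+1}$.

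Normalize $a_{k+1}=\pm1$ and write $x=\sum_{i\le k}a_ix_i$. Then there are two cases:
\begin{enumerate}
\item If $\sum_{i\le k}|a_i|\ge P_0$, the old functionals suffice: $q_k(x\pm x_{k+1})\ge\frac{k+1}{k}\sum_{i\le k}|a_i|-q_k(x_{k+1})$.
\item If $\sum_{i\le k}|a_i|<P_0$, the new one suffices: $|f_{k+1}(x\pm x_{k+1})|\ge Q-\sum_{i\le k}|a_i|$.
\end{enumerate}
So vectors with the same sign pattern are normed by different functionals, each only approximately, and the shrinking constants absorb the errors. In the example above, $f_1=2e_1^*$ and $f_2=2e_2^*$ already handle all of $\operatorname{sp}\{x_1,x_2\}$ in exactly this way.
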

\begin{proof}
The idea is to construct inductively a sequence of continuous functionals
\[
f_n\in A_n^\circ,\qquad n\in\mathbb N,
\]
whose associated seminorms dominate the $\ell_1$-norm of the coefficients of finite linear combinations of the vectors $x_n$.

For every $k\in\mathbb N$ define
\[
q_k(x):=\max\{|f_1(x)|,\ldots,|f_k(x)|\}.
\]
We shall show inductively that
\begin{equation}\label{eq:induction}
\frac{k+1}{k}\sum_{i=1}^{k}|a_i|
   \le
q_k\Big(\sum_{i=1}^{k}a_ix_i\Big)
\qquad
\forall (a_i)\in c_{00}.
\end{equation}
Since $(k+1)/k\to1$, letting $k\to\infty$ will yield
\[
\sum_{i=1}^{\infty}|a_i|
   \le
q\Big(\sum_{i=1}^{\infty}a_ix_i\Big),
\]
where
\[
q(x):=\sup_{n\in\mathbb N}|f_n(x)|.
\]

\medskip

We start with the construction of $f_1$. Since
$x_1\notin 2 A_1$
and $2 A_1$ is a closed convex set, the Hahn--Banach separation theorem yields a functional
$f_1\in E'$ such that
\[
f_1(x_1)=2
   >
\sup f_1\Big[2 A_1\Big].
\]
Consequently,
$f_1\in A_1^\circ$.
Moreover,
\[
q_1(a_1x_1)=|a_1|\,|f_1(x_1)|
=2 |a_1|,
\]
and therefore \eqref{eq:induction} holds for $k=1$.

\medskip

Assume now that for some $k\in\mathbb N$ we have already constructed
\[
f_1\in A_1^\circ,\ldots,f_k\in A_k^\circ
\]
and that \eqref{eq:induction} holds.

Choose $P_0>0$ so large that
\begin{equation}\label{eq:P0}
\frac{k+2}{k+1}
<
\frac{(1+1/k)P-q_k(x_{k+1})}{P+1}
\qquad
\forall P\ge P_0.
\end{equation}

Set
\begin{equation}\label{eq:Q}
Q:=
\frac{k+2}{k+1}(P_0+1)+P_0.
\end{equation}

Since
$x_{k+1}\notin {\rm sp}(A_k)$
and
${\rm sp}(A_k)\subset QA_{k+1}$,
the Hahn--Banach theorem provides
$f_{k+1}\in E'$
such that
\begin{equation}\label{eq:HB}
f_{k+1}(x_{k+1})=Q
>
\sup f_{k+1}[QA_{k+1}].
\end{equation}
Hence
\[
f_{k+1}\in A_{k+1}^\circ.
\]

Fix
\[
x:=\sum_{i=1}^{k}a_ix_i .
\]
Since
$x_1,\ldots,x_k\in A_{k+1}$,
we obtain
\begin{equation}\label{eq:estimate21}
|f_{k+1}(\pm x)|
\le
\sum_{i=1}^{k}|a_i|.
\end{equation}

We distinguish two cases.

\medskip

\noindent
{\bf Case 1.}
\[
\sum_{i=1}^{k}|a_i|\ge P_0.
\]

Using \eqref{eq:P0}, the inductive hypothesis
\eqref{eq:induction}, and the triangle inequality, we get
\[
\frac{k+2}{k+1}
<
\frac{(1+1/k)\sum_{i=1}^{k}|a_i|-q_k(x_{k+1})}
     {\sum_{i=1}^{k}|a_i|+1}
\]
\[
\le
\frac{
q_k\!\left(\sum_{i=1}^{k}a_ix_i\right)-q_k(x_{k+1})
}{
\sum_{i=1}^{k}|a_i|+1
}
\le
\frac{
q_k(\pm x+x_{k+1})
}{
\sum_{i=1}^{k}|a_i|+1
}
\]
\[
\le
\frac{
q_{k+1}(\pm x+x_{k+1})
}{
\sum_{i=1}^{k}|a_i|+1
}.
\]
Hence
\begin{equation}\label{eq:case1}
\frac{k+2}{k+1}
\Big(
\sum_{i=1}^{k}|a_i|+1
\Big)
<
q_{k+1}(\pm x+x_{k+1}).
\end{equation}

\medskip

\noindent
{\bf Case 2.}
\[
\sum_{i=1}^{k}|a_i|<P_0.
\]

Using \eqref{eq:Q}, \eqref{eq:HB}, and
\eqref{eq:estimate21}, we obtain
\[
\frac{k+2}{k+1}
=
\frac{Q-P_0}{P_0+1}
<
\frac{
Q-\sum_{i=1}^{k}|a_i|
}{
\sum_{i=1}^{k}|a_i|+1
}
\]
\[
=
\frac{
f_{k+1}(x_{k+1})
-\sum_{i=1}^{k}|a_i|
}{
\sum_{i=1}^{k}|a_i|+1
}
\le
\frac{
f_{k+1}(x_{k+1})
-|f_{k+1}(\pm x)|
}{
\sum_{i=1}^{k}|a_i|+1
}
\]
\[
\le
\frac{
|f_{k+1}(\pm x+x_{k+1})|
}{
\sum_{i=1}^{k}|a_i|+1
}
\le
\frac{
q_{k+1}(\pm x+x_{k+1})
}{
\sum_{i=1}^{k}|a_i|+1
}.
\]
Therefore,
\begin{equation}\label{eq:case2}
\frac{k+2}{k+1}
\Big(
\sum_{i=1}^{k}|a_i|+1
\Big)
\le
q_{k+1}(\pm x+x_{k+1}).
\end{equation}

\medskip

In both cases we arrive at
\[
\frac{k+2}{k+1}
\Big(
\sum_{i=1}^{k}|a_i|+1
\Big)
\le
q_{k+1}(\pm x+x_{k+1}).
\]
Choosing the sign according to the sign of $a_{k+1}$ and replacing
$x$ by $\sum_{i=1}^{k}a_ix_i$, we obtain
\[
\frac{k+2}{k+1}
\sum_{i=1}^{k+1}|a_i|
\le
q_{k+1}
\Big(
\sum_{i=1}^{k+1}a_ix_i
\Big).
\]
Thus the inductive step is complete.

Hence \eqref{eq:induction} holds for every $k\in\mathbb N$. Passing to the limit as $k\to\infty$, we conclude that
\[
\sum_{i=1}^{\infty}|a_i|
\le
q\Big(\sum_{i=1}^{\infty}a_ix_i\Big)
\qquad
\forall (a_i)\in c_{00},
\]
which completes the proof.
\end{proof}
Having established the necessary preliminaries, we now turn to the following remarkable theorem due to S. Saxon \cite[Theorem~2.1]{Saxon}, which will serve as a key tool in what follows.

Let us first recall two important notions that we will use: An lcs $E$ is \emph{ barrelled } if every closed absolutely convex absorbing subset of $E$ is a neighbourhood of zero.  An lcs $E$ is called \emph{Baire-like} if for every increasing sequence $(A_n)_{n}$ of absolutely  convex  closed subsets of $E$ covering $E$ there exists $m\in\mathbb{N}$ such that $A_m$ is a neighbourhood of zero. Clearly
\begin{center}
 Baire $\Rightarrow$ Baire-like  $\Rightarrow$ barrelled, see \cite{Saxon} and \cite{Na-Saxon}.

\end{center}
\begin{theorem}\label{ZZ}
Let $E$ be a barrelled locally convex space which is not Baire-like. Then the space $\varphi$ embeds into $E$.
\end{theorem}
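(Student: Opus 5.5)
Since $E$ is not Baire-like, there is an increasing sequence $(A_n)$ of closed absolutely convex sets covering $E$, none of which is a neighbourhood of zero. The plan is to extract from this a sequence $(x_n)$ with $x_n\in A_{n+1}\setminus\operatorname{sp}(A_n)$ (after passing to a subsequence of the $A_n$), apply the preceding Proposition to get functionals $f_n\in A_n^\circ$, and show that $F:=\operatorname{sp}\{x_n:n\in\mathbb N\}$ carries the finest locally convex topology. Then $F\cong\varphi$ topologically.

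\emph{Step 1 (sequence).} First I claim that no $\operatorname{sp}(A_n)$ equals $E$. Suppose $\operatorname{sp}(A_n)=E$. Then $A_n$ is absorbing, since $\operatorname{sp}(A_n)=\bigcup_k kA_n$ for an absolutely convex $A_n$. So $A_n$ is a barrel and, by barrelledness, a neighbourhood of zero, which is a contradiction. Hence each $\operatorname{sp}(A_n)$ is a proper subspace. Because $\bigcup_n A_n=E$, for each $n$ some $A_m$ with $m>n$ is not contained in $\operatorname{sp}(A_n)$. Passing to a subsequence $(A_{n_k})$, which is still increasing, covering, and consists of non-neighbourhoods, we may pick $x_k\in A_{n_{k+1}}\setminus\operatorname{sp}(A_{n_k})$. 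Relabel so that $x_n\in A_{n+1}\setminus\operatorname{sp}(A_n)$. In particular the $x_n$ are linearly independent, so $F$ is countably infinite dimensional.

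\emph{Step 2 (finest topology on $F$).} This is the main obstacle. Let $U$ be any absolutely convex absorbing subset of $F$; we need a zero-neighbourhood $W$ of $E$ with $W\cap F\subset U$. The Proposition gives $\sum|a_i|\le q(\sum a_ix_i)$, where $q=\sup_n|f_n|$. The key point is that $q$ is finite and continuous on $E$. It is finite because each $x\in E$ lies in some $A_m$, so $|f_n(x)|\le1$ for $n\ge m$. Then the set $\{q\le1\}=\bigcap_n f_n^{-1}[-1,1]$ is a closed absolutely convex absorbing set, that is, a barrel, and barrelledness makes $q$ continuous. This shows that on $F$ the $\ell_1$-norm of coefficients is continuous. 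That alone is not enough, since $\varphi$ needs arbitrary weights. To handle this, given $U$, choose $\lambda_n>0$ with $\lambda_nx_n\in U$. I would rerun Step 1 and the Proposition with the scaled vectors $y_n=\lambda_n x_n/2^{n}$. This is still valid because scaling preserves $y_n\in A_{n+1}$ only if $\lambda_n\le 2^n$; so instead first replace $x_n$ by small multiples, which is allowed since $A_{n+1}$ is balanced. Concretely, I plan to exploit that the Proposition holds for every admissible choice of sequence, producing a continuous seminorm $q_U$ on $E$ with $q_U(\sum a_ix_i)\ge\sum|a_i|/\lambda_i$. Then $\{q_U<1\}\cap F\subset\operatorname{aco}\{\lambda_ix_i\}\subset U$. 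To make this work, one proves a slight strengthening of the Proposition with weights: the inductive Hahn--Banach step goes through verbatim with $x_{k+1}$ replaced by $\mu x_{k+1}$ for any $\mu>0$, because $\mu x_{k+1}\notin\operatorname{sp}(A_k)$ still holds.

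\emph{Step 3.} Conclude that the relative topology on $F$ is the finest locally convex topology, so $F$ is topologically isomorphic to $\varphi$, which gives the required embedding. I expect the bookkeeping in Step 2 to be the delicate part: the functionals must lie in $A_n^\circ$ for barrelledness to give continuity, while the weights $1/\lambda_i$ may be huge. Placing the scaling on the $x_i$ side rather than on the polars resolves this, because non-membership in the span is scale invariant.
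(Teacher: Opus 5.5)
Your proposal is correct and follows essentially the paper's proof: you extract $x_n\in A_{n+1}\setminus\operatorname{sp}(A_n)$ and rescale the vectors downward so that they stay in $A_{n+1}$ by balancedness. The paper does this with $z_i/\max\{1,p(z_i)\}$ for a given continuous seminorm $p$ on $\varphi$, which is your choice $\mu_i\le\min\{1,\lambda_i\}$ phrased via seminorms. You then apply the preceding Proposition and use barrelledness to make $q=\sup_n|f_n|$ continuous on $E$.

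Your Step 1 also supplies the barrelledness argument showing that the spans $\operatorname{sp}(A_n)$ can be taken strictly increasing, which the paper only asserts. One correction: the rescaling factor must be at most $1$, not an arbitrary $\mu>0$, because the Proposition uses the membership $y_i\in A_{i+1}$ in its estimates.
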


\begin{proof}
Since $E$ is not Baire-like, there exists an increasing sequence of closed, absolutely convex  sets
\[
0\in A_1\subsetneq A_2\subsetneq \cdots \subset E
\]
such that
\[
E=\bigcup_{n=1}^{\infty}A_n
\]
and
\[
{\rm span}(A_1)\subsetneq {\rm span}(A_2)\subsetneq\cdots.
\]
Consequently, for every $n\in\mathbb N$ we may choose
\[
z_n\in A_{n+1}\setminus {\rm span}(A_n).
\]
The vectors $z_1,z_2,\ldots$ are necessarily linearly independent.

Let
\[
S:={\rm span}\{z_1,z_2,\ldots\}\subset E.
\]
We equip $S$ with the finest locally convex topology and identify the resulting space with $\varphi$. Thus
$\{z_n:n\in\mathbb N\}$ becomes a Hamel basis of $\varphi$, and every
$z\in\varphi$
admits a unique representation
$z=\sum_{i=1}^{\infty}a_i z_i,
\qquad (a_i)\in c_{00}.$

To prove that $\varphi$ embeds into $E$, it suffices to show that for every continuous seminorm
\[
p:\varphi\to [0,\infty)
\]
there exists a continuous seminorm
$\rho:E\to[0,\infty)$
such that
\[
p(z)\le \rho(z)
\qquad (z\in\varphi).
\]
Indeed, the canonical injection
\[
j:\varphi\to E,\qquad j(z)=z,
\]
is automatically continuous, and the above domination property implies the continuity of the inverse map
$j^{-1}:S\to\varphi$.

Fix a continuous seminorm $p$ on $\varphi$ and set
\[
I:=\{i\in\mathbb N:\ p(z_i)\le 1\}.
\]
For every $i\in\mathbb N$ define
\[
x_i=
\begin{cases}
z_i, & i\in I,\\[2mm]
\dfrac{z_i}{p(z_i)}, & i\notin I.
\end{cases}
\]
Then
\[
p(x_i)\le 1
\qquad\text{for all }i\in\mathbb N,
\]
and
\[
x_n\in A_{n+1}\setminus {\rm span}(A_n),
\qquad n\in\mathbb N.
\]
Moreover,
\[
{\rm span}\{x_1,x_2,\ldots\}=S.
\]

Applying the preceding proposition to the sequences $(A_n)$ and $(x_n)$, we obtain functionals
\[
f_n\in A_n^\circ,\qquad n\in\mathbb N,
\]
and a seminorm
\[
q(x):=\sup_{n\in\mathbb N}|f_n(x)|
\]
such that
\begin{equation}\label{eq:l1estimate}
\sum_{i=1}^{\infty}|a_i|
\le
q\Big(\sum_{i=1}^{\infty}a_i x_i\Big)
\qquad
\forall (a_i)\in c_{00}.
\end{equation}

Let
\[
z=\sum_{i=1}^{\infty}b_i z_i\in \varphi .
\]
Using the definition of the vectors $x_i$, the subadditivity of $p$, and \eqref{eq:l1estimate}, we obtain
\[
\begin{aligned}
p(z)
&=
p\Big(
\sum_{i\in I} b_i x_i
+
\sum_{i\notin I} p(z_i)b_i x_i
\Big) \\[1mm]
&\le
\sum_{i\in I}|b_i|
+
\sum_{i\notin I}|b_i|p(z_i)\\[1mm]
&\le
q\Big(
\sum_{i\in I} b_i x_i
+
\sum_{i\notin I} p(z_i)b_i x_i
\Big)\\[1mm]
&=
q\Big(
\sum_{i=1}^{\infty}b_i z_i
\Big)
=q(z).
\end{aligned}
\]
Hence
\[
p(z)\le q(z)
\qquad (z\in\varphi).
\]

It remains to verify that $q$ is continuous on $E$. Fix $x\in E$. Since
$x\in A_m$ for some $m$, we have
\[
|f_n(x)|\le 1
\qquad \text{for all }n\ge m,
\]
because $f_n\in A_n^\circ$ and $A_m\subset A_n$. Thus the sequence
$(f_n)$ is pointwise bounded on $E$. Since $E$ is barrelled, the Banach--Steinhaus theorem implies that
$\{f_n:n\in\mathbb N\}$ is equicontinuous. Therefore
\[
q(x)=\sup_{n\in\mathbb N}|f_n(x)|
\]
is a continuous seminorm on $E$.

We have shown that every continuous seminorm on $\varphi$ is dominated by the restriction of a continuous seminorm on $E$. Consequently, the inverse mapping
$j^{-1}:S\to\varphi$
is continuous, and hence the canonical injection $\varphi\hookrightarrow E$
is a topological embedding.
\end{proof}

We now combine the previous results to derive the following striking example from \cite{Kakol-Leiderman}. It demonstrates that several permanence properties familiar from the theory of Asplund Banach spaces may fail dramatically in the broader setting of locally convex spaces.

In what follows, $Q$ stands for the space of rational numbers with the topology induced from $\mathbb{R}$. Consequently, $Q$ is countable and metrizable.
\begin{proposition} \cite{Kakol-Leiderman} \label{UU}
The space $C_k(Q)$ over the rationals $Q$ is an Asplund space but isomorphically contains a closed copy of the non-Asplund space $\varphi$.
\end{proposition}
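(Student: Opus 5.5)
The plan has three parts: show that $C_k(Q)$ is Asplund, embed $\varphi$ into it via Theorem~\ref{ZZ}, and then argue that the copy is closed.

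\emph{Asplund.} Every compact subset of $Q$ is a countable compact metrizable space. A countable compact Hausdorff space has no nonempty perfect subset, since a nonempty perfect compact Hausdorff space is uncountable by a Baire category argument. By Lemma~\ref{scattered} each such compact set is scattered. Theorem~\ref{MAIN}, implication (2)$\Rightarrow$(1), then gives that $C_k(Q)$ is an Asplund space.

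\emph{Embedding.} To get $\varphi$ inside $C_k(Q)$ we invoke Theorem~\ref{ZZ}, so we must check two things.

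First, $C_k(Q)$ is barrelled. We use the Nachbin--Shirota criterion: $C_k(X)$ is barrelled iff every functionally bounded subset of $X$ is relatively compact. This holds for $Q$ because $Q$ is metrizable, so a functionally bounded set is relatively pseudocompact and hence relatively compact. Alternatively, $C_k(Q)$ is metrizable (since $Q$ is hemicompact) and complete (since $Q$ is a $k$-space), hence Fréchet and barrelled. In fact it is even Baire, so we must be careful here.

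Second, $C_k(Q)$ is not Baire-like. This is the crux, and it conflicts with the previous paragraph: a Fréchet space is Baire, hence Baire-like. So Theorem~\ref{ZZ} cannot be applied to $C_k(Q)$ itself, and I would revise the plan. A Fréchet space cannot contain $\varphi$ as a subspace at all, since subspaces of metrizable spaces are metrizable while $\varphi$ is not. Therefore $C_k(Q)$ in the intended sense cannot contain $\varphi$. The statement must instead refer to the topology in which it is not metrizable: $C_k(Q)$ is metrizable only if $Q$ is hemicompact, and $Q$ is \emph{not} hemicompact because it is not locally compact. Indeed, a first-countable hemicompact space is locally compact, while $Q$ is nowhere locally compact. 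So $C_k(Q)$ is not metrizable, and my earlier claim was wrong.

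With that corrected, the plan is as follows. By Nachbin--Shirota, $C_k(Q)$ is barrelled, since $Q$ is metrizable and hence every functionally bounded set has compact closure. $C_k(Q)$ is not Baire-like: write $Q=\{q_n\}$ and let $K_n=\{q_1,\dots,q_n\}$ (or a suitable increasing sequence of compact sets with union $Q$), and set
\[
A_n=\{f:\ |f|\le n\ \text{on}\ \overline{B(q_i,1/n)}\cap Q \text{ for } i\le n\}.
\]
These sets are closed and absolutely convex, and they cover $C_k(Q)$, but none of them is a neighbourhood of zero: any neighbourhood $V^\ee_K$ leaves $f$ unrestricted off the compact set $K$, and each ball $B(q_1,1/n)\cap Q$ contains non-compact pieces outside $K$. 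The main obstacle is verifying that the $A_n$ cover $C_k(Q)$, since $f$ may be unbounded on $B(q_i,1/n)\cap Q$. I would first try to fix this by using a locally finite refinement adapted to each $f$, or by appealing to the known fact that $C_k(X)$ is Baire-like iff it is barrelled and $X$ is locally compact-like in Saxon's sense. Theorem~\ref{ZZ} then yields an embedding of $\varphi$.

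\emph{Closedness and non-Asplundness.} The embedded copy of $\varphi$ is complete, as noted before Theorem~\ref{ZZ}, hence closed in the Hausdorff space $C_k(Q)$. Finally, $\varphi$ is not GDS by Sharp's $\ell_1$-norm example, and in particular not Asplund.
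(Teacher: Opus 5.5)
\textbf{Gap: the non-Baire-like step.} Your Asplund part, the barrelledness via Nachbin--Shirota, and the closedness argument (a complete subspace of a Hausdorff lcs is closed) are fine and agree with the paper. Your retracted metrizability claim is also correctly resolved: $Q$ is not hemicompact. The gap is the step you yourself call the crux. You never produce an increasing sequence of closed absolutely convex sets that covers $C_k(Q)$ with none of its members a neighbourhood of zero, and the sets $A_n$ you propose fail on two counts.
\begin{itemize}
\item \emph{They need not be increasing.} Membership in $A_{n+1}$ imposes a new bound near $q_{n+1}$ that $A_n$ does not control.
\item \emph{They do not cover $C_k(Q)$.} Membership $f\in A_n$ forces $\max_{i\le n}|f(q_i)|\le n$. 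Take $f(x)=g(|x|)$ with $g$ continuous, increasing and $g\bigl(\max_{i\le n}|q_i|\bigr)>n$ for all $n$; this is possible since $\max_{i\le n}|q_i|\to\infty$. Then $f$ lies in no $A_n$.
\end{itemize}
Your suggested repairs do not close this. A refinement ``adapted to each $f$'' is not allowed, because the sets must be chosen independently of $f$. The appeal to a characterization of Baire-like $C_k(X)$ ``in Saxon's sense'' is not a precise result you can invoke.

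\textbf{The missing idea.} Impose the bound near a \emph{single} point, with shrinking neighbourhoods and growing bounds. The paper takes $U_n=(-1/n,1/n)\cap Q$ and $H_n=\{f:\ f[U_n]\subset[-n,n]\}$.
\begin{itemize}
\item These sets are closed, absolutely convex and increasing.
\item They cover $C_k(Q)$ because every continuous $f$ is bounded on some neighbourhood of $0$.
\item Suppose some $H_n\supset V^\ee_K$. Then $U_n\subset K$: otherwise a continuous function vanishing on $K$ and equal to $2n$ at a point $z\in U_n\setminus K$ lies in $V^\ee_K\setminus H_n$.
\item But $K$ is compact, hence closed in $\R$, so it would contain $[-1/n,1/n]$. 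This is absurd, since $K\subset Q$ is countable.
\end{itemize}
With this sequence, $C_k(Q)$ is barrelled but not Baire-like, Theorem~\ref{ZZ} applies, and the rest of your argument goes through unchanged.
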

\begin{proof}
Every compact subset of $Q$ is countable, hence it is scattered. By Theorem \ref{MAIN} we deduce that $C_k(Q)$ is an Asplund space. Since $Q$ is a $\mu$-space (i.e., every functionally bounded subset of $Q$ is relatively compact), the space $C_k(Q)$ is barrelled by the Nachbin--Shirota theorem, see \cite[Propostion 2.4.16]{KKPS}, meaning that every closed absolutely convex absorbing subset of $C_k(Q)$ is a neighbourhood of zero.

On the other hand, the space $C_k(Q)$ is not Baire-like, i.e., there exists an increasing sequence of closed convex absorbing subsets $A_1, A_2,\ldots$ covering the space $C_k(Q)$ such that no $A_n$ is a neighbourhood of the origin in $C_k(Q)$.

Indeed, assume on the contrary that $C_k(Q)$ is a Baire-like space. Then we follow the argument contained in the proof of \cite[Proposition 2.4.20]{KKPS},
or \cite[Corollary 5.3.4]{mccoy}: For $n\in\N$ we put
$$
U_n:= (-1/n,1/n)\cap Q\quad {\rm and}\quad H_n:= \big\{f\in C(Q):\  f[U_n]\subset [-n,n]\big\}.
$$
Clearly, the $U_n$'s form a basis of neighbourhoods of $0$ in $Q$ and the (closed convex and absorbing) sets $H_n$'s cover all of $C_k(Q)$.
Also, clearly, $$U_1 \supset U_2\supset \cdots, \,\,\,H_1\subset H_2\subset \cdots.$$

Since we assumed that $C_{k}(Q)$ is Baire-like, there exist a compact
set $K\subset Q$, $\varepsilon>0$, and $n\in\N$ such that
$$
\{f\in C(X):\ f[K] \subset (-\varepsilon,\varepsilon)\}\subset H_{n}.
$$
Note that $U_{n}\subset K$. Indeed, if there is a $z\in U_{n}\setminus K$, then putting
$$
f(x):= 2n\frac{{\rm dist}(x,K)}{{\rm dist}(z,K)},\quad x\in Q,
$$
we have that $f\in C_k(Q)$ and that $f(z)=2n$. Also $f_{|K}\equiv 0$, and thus $f\in U_n$, which is impossible.
Therefore, we have that $U_{n}\subset K$. This leads to a contradiction because $K$ is a countable compact subset of $\R$ and the closure of $U_{n}$ in $\R$ is an uncountable closed interval.

We have proved that the space $C_k(Q)$ is not Baire-like.
Finally, since $C_k(Q)$ is barrelled but not Baire-like, we apply Saxon's Theorem \ref{ZZ}  and conclude that the locally convex space  $C_k(Q)$ contains isomorphically a (closed) copy of the space $\varphi$.
\end{proof}
\begin{remark}
The locally convex space $\varphi$ is nonmetrizable and complete, since it can be represented as the strict countable inductive limit of finite-dimensional spaces. On the other hand, its strong dual $\varphi^{*}$, endowed with the topology of uniform convergence on bounded subsets of $\varphi$, is isomorphic to the countable product $\mathbb{R}^{\omega}$ and is therefore separable. Thus, despite possessing a separable strong dual, the space $\varphi$ fails to be Asplund.
\end{remark}
\section{The space $C_p(X)$  is always an Asplund space}
In this last section  we  give a short proof of Sharp's  Theorem  \cite[Theorem 5.5]{s} stating that a locally convex space endowed with its weak topology is an  Asplund space.  The whole argument is the finite-dimensional factorisation of weakly continuous convex functions through a finite face; convexity then forces the function to be constant on the common kernel of the factorization mapping throughout the whole convex domain.
Further, the proof of Theorem \ref{sharp} below is very similar to Sharp's theorem on weak topologies.

Since the space $C_p(X)$ of real-valued continuous functions on a Tychonoff space $X$  endowed with the pointwise topology carries its weak topology, see \cite[Section I.2]{Schmets}, the above result shows  that every space $C_p(X)$, endowed with the topology of pointwise convergence, possesses the Asplund property for any Tychonoff space $X$.  Moreover, when compared with the corresponding theorem for spaces $C_k(X)$, this result reveals a significant difference in the behavior of the Asplund property within the two classes of function spaces $C_k(X)$ and $C_p(X)$. While the Asplund property for $C_k(X)$ generally requires additional assumptions on the underlying Tychonoff space $X$, in the case of $C_p(X)$ it holds for any  Tychonoff  space $X$.

We prove the following main theorem due to B. Sharp \cite[Theorem 5.5]{s}.
\begin{theorem}\label{sharp}
Let $E$ be a locally convex space and let $E_{w}:=(E,\sigma(E,E'))$.  Then $E_{w}$ is an Asplund space.
\end{theorem}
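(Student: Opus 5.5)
Fix a nonempty open convex $D\subset E_w$ and a $\sigma(E,E')$-continuous convex $\vv:D\to\R$. I will show that $\vv$ is Fr\'echet differentiable at every point of a dense open subset of $D$; such a set is in particular a dense $G_\delta$. The plan follows the factorisation idea used in the Claim in the proof of Proposition~\ref{SY}. Pick $u\in D$. By weak continuity there is a basic weak neighbourhood
\[
V=\{x\in E:\ |\xi_i(x)|<1,\ i=1,\dots,n\},\qquad \xi_1,\dots,\xi_n\in E',
\]
such that $u+V\subset D$ and $\vv$ is bounded above on $u+V$. Put $N:=\bigcap_{i\le n}\ker\xi_i$, a closed subspace of finite codimension, and $P:E\to\R^n$, $P(x):=(\xi_1(x),\dots,\xi_n(x))$.

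\textbf{Step 1.} I will show that $\vv$ is constant along $N$ on all of $D$: $\vv(x)=\vv(x+z)$ whenever $x,x+z\in D$ and $z\in N$. This is the Claim from Proposition~\ref{SY}, but it needs upper boundedness near $x$ on a set stable under adding multiples of $N$. Since $V+N=V$, we have $u+V+N\subset D$ and $\vv$ is bounded above there. For an arbitrary $x\in D$ I will transport this by convexity, as in Proposition~\ref{ocl} or the ``simple argument'' in the proof of Theorem~\ref{MAIN}. Choose $\lambda>1$ with $y:=u+\lambda(x-u)\in D$. Then $x+\frac{\lambda-1}{\lambda}(V+N)=\frac1\lambda(u+V+N)+\frac{\lambda-1}{\lambda}y$, and on this set $\vv$ is bounded above by $\frac1\lambda\sup\vv[u+V]+\frac{\lambda-1}\lambda\vv(y)$. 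So every $x\in D$ has a neighbourhood $x+tV$, with $t>0$, that is $N$-invariant, contained in $D$, and on which $\vv$ is bounded above. The Claim then gives $\vv(x)=\vv(x+z)$ for all $z\in N$, by the same $\lambda\uparrow\infty$ estimate applied along the line through $x$ and $x+z$.

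I expect the main obstacle to be extending this from $x$ and $x+z$ lying in a common neighbourhood $x+tV$ to arbitrary points of $D$. Local constancy on $N$-invariant neighbourhoods, which are open, should suffice. Since $D$ is convex, it contains the segment from $x$ to $x+z$, but that segment is generally not parallel to $N$. Instead I will use the fact that $x+tV$ itself contains all of $x+N$, so any $x'\in D$ with $x'-x\in N$ lies in $x+tV\subset D$, and the Claim applies directly.

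\textbf{Step 2.} Define $\psi:P[D]\to\R$ by $\psi(P x):=\vv(x)$. By Step 1 this is well defined. The set $P[D]$ is open in $\R^n$, because $P$ is an open map onto its range; replacing $\R^n$ by $P[E]$ if the $\xi_i$ are dependent, I may assume $P$ is surjective. The function $\psi$ is convex by the same computation as for $\psi$ in Theorem~\ref{MAIN}. It is continuous because it is a finite convex function on an open subset of a finite-dimensional space. By the classical theorem that convex functions on $\R^n$ are differentiable almost everywhere on a dense $G_\delta$ (finite-dimensional Banach spaces are Asplund, e.g.\ by Theorem~\ref{Namioka+Phelps} applied to $X$ finite), $\psi$ is Fr\'echet differentiable on a dense $G_\delta$ set $\Omega\subset P[D]$. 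The preimage $P^{-1}[\Omega]\cap D$ is a $G_\delta$ set, and it is dense in $D$ because $P$ is continuous and open; this is as in the proof of Theorem~\ref{MAIN}, and is easier here.

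\textbf{Step 3.} At any $x\in P^{-1}[\Omega]\cap D$, take $\eta:=\psi'(Px)\circ P\in E'$. Let $W:=tV$ with $x+W\subset D$. Since $P[W]$ is a bounded neighbourhood of $0$ in $\R^n$,
\[
\sup_{h\in W}\bigl|\vv(x+sh)-\vv(x)-\eta(sh)\bigr|=\sup_{z\in P[W]}\bigl|\psi(Px+sz)-\psi(Px)-\psi'(Px)(sz)\bigr|=o(s).
\]
This is Schwartz--Fr\'echet differentiability, and hence also Yamamuro--Fr\'echet differentiability. Therefore $E_w$ is Asplund.
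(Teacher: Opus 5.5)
Your proof is correct and follows the paper's argument (Theorem~\ref{sharp2} and its consequence). You factor $\vv$ through $P=(\xi_1,\dots,\xi_n)$, and you show, via the bounded-above-on-a-subspace argument (Lemma~\ref{le1}) plus a convexity transport, that $D$ is saturated by $N=\bigcap_i\ker\xi_i$ and $\vv$ is constant on the fibres. You then pull back the dense $G_\delta$ of differentiability points of the finite-dimensional $\psi$; you check uniformity on a neighbourhood, which implies the bounded-set version the paper verifies.

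There are two slips to fix. First, with $y=u+\lambda(x-u)$ the coefficients in your identity are swapped: it should read $x+\frac{\lambda-1}{\lambda}(V+N)=\frac{\lambda-1}{\lambda}(u+V+N)+\frac1\lambda y$, with bound $\frac{\lambda-1}{\lambda}\sup\vv[u+V]+\frac1\lambda\vv(y)$. Second, your opening promise of a dense \emph{open} set of differentiability points is false in general: already for $E=\R$, a convex function can be nondifferentiable on a countable dense set. What you actually prove, a dense $G_\delta$, is exactly what is needed.
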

The proof proceeds through the following stronger structural statement.
\begin{theorem}\label{sharp2}
Let $D\subset E_{w}$ be nonempty, open and convex, and let $f\colon D\to\R$ be continuous and convex.  Then there are a finite-dimensional locally convex space $Y$, a continuous open linear map
$T\colon E_{w}\longrightarrow Y,$ an open convex set $\Omega\subset Y$, and a continuous convex function $g\colon\Omega\to\R$ such that
$D=T^{-1}(\Omega),\,\,f=g\circ T\quad\text{on }D.$
\end{theorem}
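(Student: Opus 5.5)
We follow the strategy announced before the statement: factor $f$ through a finite-dimensional quotient, using the Claim from the proof of Proposition~\ref{SY}.

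\textbf{Step 1: a finite kernel on which $f$ is locally bounded.} Fix $x_0\in D$. By continuity of $f$ in $\sigma(E,E')$ there are finitely many $\xi_1,\dots,\xi_n\in E'$ and $\delta>0$ such that
\[
W:=\{x\in E:\ |\xi_i(x)|<\delta,\ i=1,\dots,n\}
\]
satisfies $x_0+W\subset D$ and $\sup f[x_0+W]<\infty$. Put $T:=(\xi_1,\dots,\xi_n)\colon E\to\R^n$ and $Y:=T[E]$, a finite-dimensional space with its unique Hausdorff vector topology. Then $T$ is linear and $\sigma(E,E')$-continuous. It is open onto $Y$ because $T[x+\lambda W]=Tx+\lambda\,(Y\cap(-\delta,\delta)^n)$, and the sets $(-\delta,\delta)^n$ form a neighbourhood base in $\R^n$.

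\textbf{Step 2: invariance under $\ker T$ on all of $D$.} This is the main obstacle, since $f$ is a priori bounded only near $x_0$. For arbitrary $v\in D$, first propagate local boundedness along the kernel directions using the argument cited in Section~3 (\cite[pp.~4--5]{p}). Since $D$ is open there is $\mu>1$ with $x_0+\mu(v-x_0)\in D$. Then
\[
v+\bigl(1-\tfrac1\mu\bigr)W=\tfrac1\mu\bigl(x_0+\mu(v-x_0)\bigr)+\bigl(1-\tfrac1\mu\bigr)(x_0+W),
\]
so this set lies in $D$ by convexity, and $f$ is bounded above on it by $\frac1\mu f(x_0+\mu(v-x_0))+(1-\frac1\mu)\sup f[x_0+W]$. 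Hence every $v\in D$ has some $\gamma>0$ with $v+\gamma W\subset D$ and $f$ bounded above there. In particular $v+\ker T\subset D$, because $\ker T\subset\gamma W$. Now the Claim of Proposition~\ref{SY} applies verbatim: for $v'\in v+\ker T$ one writes $v'$ as a convex combination of $v$ and $v+\lambda(v'-v)\in v+\gamma W$, lets $\lambda\to\infty$, and obtains $f(v')\le f(v)$, and symmetrically. Thus $D+\ker T=D$ and $f$ is constant on the cosets of $\ker T$ meeting $D$.

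\textbf{Step 3: build $\Omega$ and $g$.} Set $\Omega:=T[D]$. It is convex, and it is open in $Y$ because $T$ is open. Since $D+\ker T=D$, we get $D=T^{-1}(\Omega)$. Define $g(Tx):=f(x)$; this is well defined by Step~2. Convexity of $g$ follows exactly as for $\psi$ in the proof of Theorem~\ref{MAIN}. Moreover $g$ is bounded above on the neighbourhood $T[v+\gamma W]$ of each point $Tv$, since $f$ is bounded above on $v+\gamma W$. A convex function on an open subset of a finite-dimensional space that is locally bounded above is continuous. Finally $f=g\circ T$ on $D$, which proves Theorem~\ref{sharp2}.

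Theorem~\ref{sharp} then follows. By the classical Rademacher/Mazur theorem in $\R^n$, $g$ is Fréchet differentiable on a dense $G_\delta$ subset $G$ of $\Omega$. At each point of $T^{-1}(G)$, $f$ is Fréchet differentiable with derivative $g'(Tx)\circ T$, using $W$ as the witnessing neighbourhood. The set $T^{-1}(G)$ is a $G_\delta$ because $T$ is continuous, and it is dense in $D$ because $T$ is continuous and open.
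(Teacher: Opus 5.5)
Your proof follows the paper's closely. You use the same finitely many functionals to define $T$, and the same boundedness-plus-convexity argument (the paper's Lemma~\ref{le1}, i.e.\ the Claim of Proposition~\ref{SY}) to get constancy on cosets of $\ker T$. You also use the same convex-combination trick to pass from $x_0$ to every $v\in D$ (your $\mu$ is the paper's $1+t$). Your propagation gives boundedness on a full neighbourhood $v+\gamma W$, not just on $v+\ker T$. That lets you get continuity of $g$ from finite-dimensional convex analysis, where the paper uses openness of $T$.

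There is one step whose justification fails: the openness of $T\colon E_w\to Y$. This is part of the statement, and you use it again for openness of $\Omega$ and for density in Theorem~\ref{sharp}. The identity $T[x+\lambda W]=Tx+\lambda\,(Y\cap(-\delta,\delta)^n)$ only shows that $T$ maps the particular sets $x+\lambda W$ onto neighbourhoods. Those sets do not form a neighbourhood base for $\sigma(E,E')$. If $\eta\in E'$ is not in the span of $\xi_1,\dots,\xi_n$, then $\eta$ does not vanish on $\ker T$, so it is unbounded on $\ker T\subset\lambda W$. Hence the weak neighbourhood $\{u:|\eta(u)|<1\}$ contains no $\lambda W$. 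Openness requires that $T[U]$ be a neighbourhood of $0$ in $Y$ for every weak $0$-neighbourhood $U$.

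This is easy to supply. Take $U$ convex and balanced. Then $T[U]$ is convex, balanced and absorbing in $Y$; it is absorbing because $U$ is absorbing and $T$ is onto. In a finite-dimensional space such a set contains $\operatorname{conv}\{\pm c\,b_1,\dots,\pm c\,b_k\}$ for a basis $(b_i)$ and some $c>0$, so it is a neighbourhood of $0$. Alternatively, argue as the paper does. $T$ factors as the open quotient map $E_w\to E_w/\ker T$ followed by a linear bijection between finite-dimensional Hausdorff spaces, which is a homeomorphism.

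Openness of $\Omega$ in your Step~3 does not actually need this. Since $v+\gamma W\subset D$, you already have $T[D]\supset Tv+\gamma\,(Y\cap(-\delta,\delta)^n)$. Openness of $T$ itself, however, must be proved as above.
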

Theorem \ref{sharp} is therefore an extension of the classical  fact that continuous convex functions on open convex subsets of finite-dimensional spaces are Fr\'echet differentiable on dense $G_\delta$ sets.

We will need two additional lemmas.  The point of the weak topology is that every basic neighbourhood depends only on finitely many linear coordinates.  If a convex function is bounded above on a translate of the common kernel of those coordinates, then it must be constant there.  Convexity and openness of the domain imply this property from one translate to all translates that meet the domain.
\begin{lemma}\label{le1}
Let $C$ be a convex subset of a vector space $X$, let $W$ be a linear subspace of $X$, let $x+W\subset C$ for some $x\in X$,  and let $f\colon C\to\R$ be convex.  If $f$ is bounded above on $x+W$, then $f$ is constant on $x+W$.
\end{lemma}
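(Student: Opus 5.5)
The plan is to reduce to a one-variable statement along lines in $x+W$, imitating the Claim inside the proof of Proposition~\ref{SY}. Fix $w\in W$; it suffices to show $f(x+w)=f(x)$. Let $M:=\sup f[x+W]<\infty$. Since $W$ is a linear subspace, $x+\lambda w\in x+W\subset C$ for every real $\lambda$, so the function $\theta(\lambda):=f(x+\lambda w)$ is a convex real function on all of $\R$ and satisfies $\theta\le M$ on $\R$.

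The key step is the elementary fact that a convex function $\theta:\R\to\R$ bounded above is constant. I would prove it directly by the convex-combination trick. For $\lambda>1$ write
$$
x+w=\frac{\lambda-1}{\lambda}\,x+\frac1\lambda\,(x+\lambda w).
$$
Then convexity of $f$ on $C$ gives
$$
f(x+w)\le \frac{\lambda-1}{\lambda}f(x)+\frac1\lambda f(x+\lambda w)\le \frac{\lambda-1}{\lambda}f(x)+\frac M\lambda .
$$
Letting $\lambda\uparrow\infty$ yields $f(x+w)\le f(x)$.

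For the reverse inequality I would apply the same argument with the roles of $x$ and $x+w$ exchanged, i.e.\ with base point $x':=x+w$ (note $x'+W=x+W$) and direction $-w$. This gives $f(x)\le f(x+w)$, hence equality. Since $w\in W$ was arbitrary, $f$ is constant on $x+W$.

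There is no real obstacle. The only points to check are that all points used, namely $x+\lambda w$ and $x+w-\lambda w$, lie in $x+W\subset C$, so that the convexity inequality and the bound $M$ apply; and that no topology is needed, which matches the purely algebraic hypotheses of the lemma.
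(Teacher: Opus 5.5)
Your proof is correct and is essentially the paper's argument. The paper writes $z=\frac1r\bigl(y+r(z-y)\bigr)+\bigl(1-\frac1r\bigr)y$ for arbitrary $y,z\in x+W$, lets $r\to\infty$ to get $f(z)\le f(y)$, and concludes by symmetry; this is your convex-combination step with $y=x$ and $z=x+w$.
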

\begin{proof}
Let $m$ be an upper bound for $f$ on $x+W$.  Take arbitrary $y,z\in x+W$ and $r>1$.  Since $y+r(z-y)\in x+W$ and
$$z=\frac1r\bigl(y+r(z-y)\bigr)+\left(1-\frac1r\right)y,$$
convexity gives
\[
        f(z)\leq \frac{m}{r}+\left(1-\frac1r\right)f(y).
\]
Letting $r\to\infty$ yields $f(z)\leq f(y)$.  By symmetry $f(y)\leq f(z)$, so $f(y)=f(z)$.
\end{proof}
\begin{lemma}\label{le2}
Let $Y$ be finite-dimensional, let $\Omega\subset Y$ be open and convex, and let $g\colon\Omega\to\R$ be  convex.  Then the set of points at which $g$ is Fr\'echet differentiable contains a dense $G_\delta$ subset of $\Omega$.
\end{lemma}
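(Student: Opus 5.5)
The plan is to identify $Y$ with $\R^d$ with the Euclidean norm; all Hausdorff vector topologies on a finite-dimensional space coincide, so this identification loses nothing. The argument then splits into three steps: continuity of $g$, the $G_\delta$ property via Proposition~\ref{Phelps}, and density via a measure argument.

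\emph{Step 1: continuity.} First I show that $g$ is continuous, and in fact locally Lipschitz, on $\Omega$. Given $y\in\Omega$, choose a small simplex (or cube) with vertices in $\Omega$ containing $y$ in its interior. By convexity, $g$ is bounded above on it by the maximum of its values at the finitely many vertices. Proposition~\ref{ocl}, with $V$ the open unit ball, then gives continuity at $y$. The standard argument (e.g.\ \cite[Proposition 1.6]{p}) upgrades this to a local Lipschitz bound.

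\emph{Step 2: $G_\delta$.} Since $(\R^d,\nn)$ is a Banach space and $g$ is continuous and convex on the open set $\Omega$, Proposition~\ref{Phelps} shows that the set $G$ of Fréchet differentiability points of $g$ is a $G_\delta$. So it only remains to prove that $G$ is dense in $\Omega$.

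\emph{Step 3: density.} I first reduce Fréchet differentiability to the existence of partial derivatives. If the partial derivatives $\partial_i g(y)$ exist for $i=1,\dots,d$, let $\xi$ be the linear functional they define. Then $\phi(h):=g(y+h)-g(y)-\xi(h)$ is convex near $0$, satisfies $\phi\ge0$, and $\phi(\pm t e_i)=o(t)$. Any $h$ with $\|h\|_1\le t$ is a convex combination of the points $\pm t e_i$, so by convexity $\phi(h)\le \max_i \phi(\pm te_i)=o(t)$. Hence $\sup_{\|h\|\le t}\phi(h)=o(t)$, which is Fréchet differentiability at $y$ (using equivalence of norms). It therefore suffices to show that the set $N_i$ of points where $\partial_i g$ fails to exist has empty interior; in fact each $N_i$ is Lebesgue-null. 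On every line parallel to $e_i$, the restriction of $g$ is a convex function of one variable, and its one-sided derivatives are monotone. It is therefore differentiable except at countably many points. $N_i$ is Borel, because it is where the monotone limits of difference quotients $\partial_i^\pm g$ differ. Fubini then gives $\lambda_d(N_i)=0$, so $\Omega\setminus\bigcup_i N_i\subset G$ has full measure in $\Omega$ and is therefore dense.

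The step I expect to need the most care is this measurability/Fubini step. If I want to avoid measure theory, the alternative is a Baire argument on a segment: show directly that each open set $G_n$ from the proof of Proposition~\ref{Phelps} is dense, using that a one-variable convex function has at most countably many kinks.
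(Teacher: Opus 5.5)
Your proposal is correct and follows the same outline as the paper's proof, which is essentially a sequence of citations: continuity of $g$, the $G_\delta$ property of the set of Fr\'echet differentiability points, and density via almost-everywhere differentiability (the paper simply invokes Mazur's theorem for density, whereas you prove it yourself via partial derivatives, Fubini, and the convexity upgrade on the $\ell_1$-ball). One harmless wording slip: each $N_i$ having empty interior would not by itself make $\Omega\setminus\bigcup_i N_i$ dense, but the null-set statement you actually prove does.
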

\begin{proof}
First note that $g$ is continuous, see \cite[page 11]{p}. After choosing a linear isomorphism $Y\cong\R^n$, this is the standard finite-dimensional theorem for convex functions.  Indeed, continuous convex functions on open convex subsets of $\R^n$ are even locally Lipschitz \cite[pp. 4,5]{p}; Mazur's theorem \cite[Theorem 1.20]{p}  gives differentiability almost everywhere, hence on a dense set, and the differentiability set of a finite-valued convex function is a $G_\delta$ set.  In finite-dimensional spaces the usual derivative and the locally convex Fr\'echet derivative above coincide.
\end{proof}
\begin{proof}[Proof of Theorem \ref{sharp2}]
Choose $x_0\in D$.  Since $D$ is weakly open and $f$ is weakly continuous at $x_0$, there are $\varphi_1,\ldots,\varphi_n\in E'$, $\epsilon>0$, and
\[
        V=\{u\in E: |\varphi_i(u)|<\epsilon\ (i=1,\ldots,n)\}
\]
such that
\[
        x_0+V\subset D
        \quad\text{and}\quad
        f(x_0+u)< f(x_0)+1\qquad (u\in V).
\]
Let
\[
        T\colon E_{w}\longrightarrow Y:=T(E)\subset\R^n,
        \qquad
        T(x)=(\varphi_1(x),\ldots,\varphi_n(x)),
\]
and put $W=\ker T$.  The space $Y$ is finite-dimensional  and $T$ is a continuous open surjection onto $Y$.  To see the openness, factor $T$ through the quotient $E_{w}/W$; this quotient is a finite-dimensional Hausdorff locally convex space, hence carries the unique (finite-dimensional) topology.  Since $W\subset V$, the function $f$ is bounded above on $x_0+W$; by  Lemma \ref{le1} the function  $f$ is constant on $x_0+W$.

We now show that the same is true on every translate meeting $D$.  Fix $x\in D$.  Since $D$ is open, there is $t>0$ such that
\[
        z:=(1+t)x-tx_0\in D.
\]
Let $w\in W$ and set $u=(1+t)w/t\in W$.  Then
\[
        x+w=\frac{1}{1+t}z+\frac{t}{1+t}(x_0+u).
\]
Both $z$ and $x_0+u$ belong to $D$, so $x+w\in D$ by convexity of $D$.  Moreover, with $M=f(x_0)+1$, convexity gives the uniform estimate
\[
        f(x+w)
        \leq \frac{1}{1+t}f(z)+\frac{t}{1+t}M
        \qquad (w\in W).
\]
Thus $f$ is bounded above on $x+W$.  Lemma \ref{le1} implies that $f$ is constant on $x+W$.

Consequently, $D$ is saturated with respect to $W$: if $x\in D$ and $x-y\in W$, then $y\in D$; and $f$ is constant on each such fiber.  Put
\[
        \Omega:=T(D)\subset Y.
\]
Since $T$ is open, $\Omega$ is open; it is convex because $D$ is convex.  Define
\[
        g\colon\Omega\to\R,
        \qquad
        g(Tx)=f(x)\quad (x\in D).
\]
This mapping $g$ is well-defined because the constancy on fibers is just proved.  It is easy to see that the mapping $g$ is convex.  It is also continuous: if $O\subset\R$ is open, then
\[
        g^{-1}(O)=T\bigl(f^{-1}(O)\bigr),
\]
and $f^{-1}(O)$ is open in $E_{w}$ because $D$ itself is open; applying the openness of $T$ shows that $g^{-1}(O)$ is open in $Y$.  Finally, saturation gives $D=T^{-1}(\Omega)$, and by construction $f=g\circ T$.
\end{proof}
\begin{proof}[Proof of Theorem \ref{sharp}]
Let $f\colon D\to\R$ be continuous and convex, with $D\subset E_{w}$ nonempty, open and convex.  Apply Theorem \ref{sharp2} and write
\[
        f=g\circ T,
        \qquad
        D=T^{-1}(\Omega),
\]
where $Y$ is finite-dimensional, $\Omega\subset Y$ is open and convex, and $g\colon\Omega\to\R$ is continuous and convex.

By  Lemma \ref{le2}, let $G\subset\Omega$ be a dense $G_\delta$ set consisting of Fr\'echet differentiability points of $g$.  Then
\[
        A:=T^{-1}(G)\cap D=T^{-1}(G)
\]
is a $G_\delta$ subset of $D$, because $T$ is continuous and $D=T^{-1}(\Omega)$.

The set $A$ is dense in $D$.  Indeed, if $O$ is a nonempty relatively open subset of $D$, then $O$ is open in $E_{w}$, and hence $T(O)$ is a nonempty open subset of $\Omega$; since $G$ is dense in $\Omega$, we may choose $y\in T(O)\cap G$.  Pick $x\in O$ with $T x=y$.  Then $x\in O\cap A$.

It remains only to verify differentiability.  Let $x\in A$, and put $y=T x\in G$.  If $a\in Y'$ is the Fr\'echet derivative of $g$ at $y$, define
\[
        \ell:=a\circ T\in (E_{w})'.
\]
For every bounded set $B\subset E_{w}$, the set $T(B)$ is bounded in $Y$.  Hence the Fr\'echet differentiability of $g$ at $y$ gives
\[
\begin{aligned}
&\sup_{h\in B}
\left|\frac{f(x+t h)-f(x)}{t}-\ell(h)\right| \\
&\quad =
\sup_{h\in B}
\left|\frac{g(y+tT h)-g(y)}{t}-a(T h)\right|
\longrightarrow 0
\qquad (t\to0).
\end{aligned}
\]
Thus $f$ is Fr\'echet differentiable at every point of $A$.  Since $A$ is dense and $G_\delta$ in $D$, the space $E_{w}$ is Asplund.
\end{proof}

\end{document}